\newtheorem{theorem}{Theorem}[section]
\newtheorem{proposition}[theorem]{Proposition}
\newtheorem{definitionp}[theorem]{Definition/Proposition}
\newtheorem{remark}[theorem]{Remark}
\newtheorem{definition}[theorem]{Definition}
\newtheorem{corollary}[theorem]{Corollary}
\newtheorem{lemma}[theorem]{Lemma}
\newtheorem{convention}[theorem]{Convention}
\newtheorem{mainthm}{Theorem}
\title{Strongly Adapted Contact Geometry of Anosov 3-Flows}
\author{Surena Hozoori}
\newcommand{\Addresses}{{% additional braces for segregating \footnotesize
  \bigskip
  \footnotesize

Surena Hozoori, \textsc{Department of Mathematics, University of Rochester.}\par\nopagebreak
  \textit{E-mail address}: \texttt{shozoori@ur.rochester.edu}
  
  }}
    \date{}
\begin{document}
\maketitle
\noindent
\begin{abstract}
We provide a 3 dimensional contact geometric characterization of Anosov 3-flows based on interactions with Reeb dynamics. We investigate basic properties of the space of the resulting geometries and in particular show that such space is homotopy equivalent to the space of Anosov 3-flows. A technical theorem on the asymptotic synchronization of adapted norms is proved, which can be of broader interest.
\end{abstract}

{
  \hypersetup{linkcolor=black}
  \tableofcontents
}

%%%%%%%%%%%%%%%%%%%%%%%%%%%%%

\section{Introduction}

Hyperbolic dynamics and low dimensional topology have been two of the most rapidly growing fields of mathematics since the mid 20th century, branching out into many sub theories, revealing deep interplays with other areas of research. In particular, the study of {\em Anosov flows in dimension 3} has provided a framework where the two theories interact most profoundly and colorfully. 
With one foot in the theory of chaos and another in geometric topology, {\em Anosov dynamics} is proven to be a subtle multi-faceted theory in the intersection of dynamics, geometry and topology. More specifically, most of these advances have occurred since the 1970s and thanks to the very successful developments and applications of {\em taut foliations} and {\em geometric group theory} \cite{fenley,danny,bartintro,hflows}.

On the other hand, rooted in the Hamiltonian reformulation of Newtonian physics, the study of {\em contact structures} has appeared in various geometric contexts throughout the previous century and since the 1980s, far-reaching relations between these geometric structures and topology have been explored, especially in low dimensions. As a result, {\em contact topology} has bloomed into one of the most active areas of low dimensional topology in the past decades \cite{geiges,martinet}. Moreover, a dynamical approach to contact topology has been well established via the study of {\em Reeb flows} associated with contact structures \cite{hofer}.

Since their introductions, interplays between Anosov dynamics and contact topology have been observed commonly, but mostly in the context of {\em geodesic flows} and their generalizations. The foundation for a more general and deeper investigation of their connections however, goes back to a novel interplay between the two theories, discovered in the mid 1990s, by Mitsumatsu \cite{mit1} and Eliashberg-Thurston \cite{et}, motivated by their study of {\em exotic Liouville geometry in dimension 4} and {\em perturbation theory of foliations}, respectively. Most importantly, they showed that an important generalization of Anosov dynamics can be characterized, in dimension 3, in purely contact geometric terms. Such class of dynamics with a weak notion of hyperbolicity is called {\em projectively Anosov flows}, or {\em flows with dominated splittings}, and the contact geometric counterpart is named {\em bi-contact structure}. Mitsumatsu \cite{mit1} takes this another step forward by constructing 4-dimensional Liouville domains with disconnected boundary, based on Anosov dynamics in dimension 3 (see Section~\ref{s2} for relevant definitions and history). 

These ideas were left mostly unexplored until recent years, where significant progress has occurred in the landscape of theory, mainly motivated by the {\em sympletic geometric theory of Anosov flows} \cite{bbp,hoz3,sal1,clmm,hoz6} and {\em convex hypersurface theory} \cite{huang,honda,bc}. In particular in \cite{hoz3}, the author exploits the construction of Mitsumatsu to introduce a purely contact and symplectic geometric characterization of Anosov 3-flows. This means that Anosov 3-flows can in fact be viewed as contact and symplectic geometric objects and hence, all the tools in the study of contact and symplectic geometry can be carried over to explore questions about Anosov flows. This results in a contact geometric theory of Anosov 3-flows with a strong 4-dimensional symplectic flavor. Subsequently, Cieliebak-Lazarev-Massoni-Moreno \cite{clmm} introduced a family of novel symplectic geometric invariants for these flows, based on the subtle geometric analysis of {\em Floer theory}.

While the depth of the theory, as described above, lies mainly in 4 dimensional symplectic topology, further interactions with 3 dimensional contact topology have been proven to exist in parallel, mainly relying on interplays with Reeb dynamics. This includes the development of a {\em contact geometric surgery theory of Anosov 3-flows} by Foulon-Hasselblatt \cite{fh,hass} and Salmoiraghi \cite{sal1,sal2}, presenting a unifying contact geometric approach to surgeries of Anosov flows, a very important ingredient in the classical theory. Furthermore, the existence of invariant volume forms and the dynamical consequences are studied from the viewpoint of bi-contact geometry and Reeb dynamics in \cite{hoz4}.

\vskip0.5cm

\textbf{1.1 The Main result:}
 The main goal of this paper is to provide a purely contact geometric characterization of Anosov 3-flows, based on their interactions with Reeb dynamics. This results in a contact geometric local picture for these flows, based on which we set up a novel contact geometric framework for Anosov dynamics, recontextualizing the previous related work.
The following is the main theorem of this paper. The reader should consult Section~\ref{s2} for definitions related to bi-contact geometry and its relation to Anosov dynamics.

 \begin{mainthm}\label{A}
 Let $X$ be a $C^\infty$ non-vanishing vector field on a closed oriented 3-manifold $M$. TFAE:
 
 (1) $X$ generates a transversally oriented Anosov flow;
 
 (2) there exists a positive contact form $\alpha_+$, such that $\alpha_+(X)=0$ and $\mathcal{L}_X\alpha_+$ is a negative contact form;
 
 (3) there exists a bi-contact structure $(\xi_-,\xi_+)$ supporting $X$, such that $\xi_-$ contains a Reeb vector field of $\xi_+$;
 
 (4) there exists a Reeb flow $R_+$ for a positive contact structure containing $X$, such that 
 
 \noindent $(X,[R_+,X],R_+)$ is an oriented basis for $TM$.
 
 \end{mainthm}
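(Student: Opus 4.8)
The plan is to treat the three contact-geometric conditions $(2),(3),(4)$ as essentially interchangeable by direct computation, and to isolate $(1)\Leftrightarrow(2)$ as the genuinely dynamical content, where hyperbolicity must be matched against the contact condition. Concretely I would prove $(2)\Rightarrow(3)\Rightarrow(4)\Rightarrow(2)$ by soft arguments and then close the loop through $(1)$, with $(1)\Rightarrow(2)$ carrying the analytic weight.

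For $(2)\Rightarrow(3)$, since $\alpha_+(X)=0$, Cartan's formula gives $\beta:=\mathcal{L}_X\alpha_+=\iota_X d\alpha_+$, so $\beta(X)=d\alpha_+(X,X)=0$ and both $\ker\alpha_+$ and $\ker\beta$ contain $X$. As $d\alpha_+$ is nondegenerate on $\ker\alpha_+$ and $X\in\ker\alpha_+\setminus\{0\}$, the restriction $\beta|_{\ker\alpha_+}=d\alpha_+(X,\cdot)$ is nonzero with kernel exactly $\mathbb{R}X$; hence $\xi_+:=\ker\alpha_+$ and $\xi_-:=\ker\beta$ are transverse and meet along $\mathbb{R}X$, so $(\xi_-,\xi_+)$ is a bi-contact structure supporting $X$ ($\xi_-$ being negative since $\beta=\mathcal{L}_X\alpha_+$ is a negative contact form by hypothesis). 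Finally, if $R_+$ denotes the Reeb field of $\alpha_+$, then $\beta(R_+)=d\alpha_+(X,R_+)=-(\iota_{R_+}d\alpha_+)(X)=0$, so $R_+\in\xi_-$, which is (3).

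For $(3)\Leftrightarrow(4)\Leftrightarrow(2)$ I would run a single local computation, always taking $R_+$ to be the Reeb field of the chosen contact form $\alpha_+$ for $\xi_+$ and setting $\beta:=\iota_X d\alpha_+=\mathcal{L}_X\alpha_+$. First, $\ker\beta=\xi_-$ because $\{X,R_+\}\subset\ker\beta$ spans $\xi_-$ and $\beta\neq0$; since the sign of $\gamma\wedge d\gamma$ is invariant under nonzero rescaling of $\gamma$, the condition that $\beta$ be negative contact is the same as $\xi_-$ being a negative contact structure. Next, $\mathcal{L}_{R_+}\alpha_+=0$ and $\alpha_+(X)=0$ give $\alpha_+([R_+,X])=0$, so $[R_+,X]\in\xi_+$; and a Darboux-chart calculation yields the two pointwise identities that $(X,[R_+,X],R_+)$ is positively oriented exactly when $\beta([R_+,X])>0$, and that $\beta\wedge d\beta=-\beta([R_+,X])\,\alpha_+\wedge d\alpha_+$. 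Since $\alpha_+$ is positive, these assemble into the chain $\beta\text{ negative contact}\iff\beta([R_+,X])>0\iff(X,[R_+,X],R_+)\text{ positively oriented}$, which makes $(2),(3),(4)$ equivalent; the only real care needed is to keep the orientation and sign conventions consistent across the three statements.

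The substance lies in $(1)\Leftrightarrow(2)$. For $(2)\Rightarrow(1)$, the bi-contact structure extracted above already forces $X$ to be projectively Anosov by the theorems of Mitsumatsu and Eliashberg--Thurston, producing a dominated splitting $E^s\oplus\mathbb{R}X\oplus E^u$ with $E^u\subset\xi_+$ and $E^s\subset\xi_-$; to upgrade domination to genuine hyperbolicity I would track the evolution $\tfrac{d}{dt}\phi_t^*\alpha_+=\phi_t^*\beta$ of the contact form under the flow and the induced action on these extremal line fields, exploiting that $\beta$ is exactly $\mathcal{L}_X\alpha_+$ (not merely some negative contact form) to extract a Lyapunov/cone estimate giving exponential contraction on $E^s$ and expansion on $E^u$. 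For $(1)\Rightarrow(2)$ I would start from the Anosov splitting and an adapted metric, take $\alpha_+$ to be a defining one-form for the weak bundle $E^u\oplus\mathbb{R}X$ cooriented so that it is positive contact, and then try to arrange $\mathcal{L}_X\alpha_+$ to be contact. I expect this last step to be the main obstacle: the sign of $(\mathcal{L}_X\alpha_+)\wedge d(\mathcal{L}_X\alpha_+)$ is governed by the interplay between the stable and unstable expansion rates, so controlling it forces one to choose an adapted norm in which these rates are asymptotically matched. This is exactly where I would invoke the asymptotic synchronization of adapted norms highlighted in the abstract, which I anticipate is the technical heart of the entire argument.
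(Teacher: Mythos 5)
Your handling of $(2)\Leftrightarrow(3)\Leftrightarrow(4)$ is correct and is essentially the paper's Proposition~\ref{strad}: the pointwise identity $\beta\wedge d\beta=-\beta([R_+,X])\,\alpha_+\wedge d\alpha_+$ for $\beta=\iota_Xd\alpha_+$, together with $\ker\beta=\langle X,R_+\rangle$, is exactly the computation carried out there, and you are right that orientation bookkeeping is the only delicate point. The genuine gap is in $(1)\Rightarrow(2)$, where your concrete Ansatz fails: you propose to take $\alpha_+$ to be a defining one-form for the weak bundle $E^u\oplus\mathbb{R}X$, ``cooriented so that it is positive contact.'' No such form exists. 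The weak unstable bundle is integrable (tangent to $\mathcal{F}^u$), so by Frobenius any defining $1$-form $\alpha$ satisfies $\alpha\wedge d\alpha\equiv 0$ and can never be contact; indeed, in the quadrant picture of the paper, kernels of supporting positive contact forms project to lines strictly inside the first/third quadrants of $E\oplus F$ and never coincide with either weak bundle. The paper's construction instead takes $\alpha_+=\alpha_u-\alpha_s$, the \emph{difference} of defining forms of the two weak bundles (Convention~\ref{added}), whose kernel is transverse to both $E^{ws}$ and $E^{wu}$. It is this adapted form which, after reparametrizing $X$ so that $r_s\equiv -1$ and replacing $\alpha_u$ by its flow-average $\alpha_{u,T}$ from Theorem~\ref{expunif}, becomes strongly adapted: strong adaptation reduces to $1+r_u+X\cdot\ln r_u>0$, which holds once $|X\cdot\ln r_{u,T}|$ is uniformly small (Lemma~\ref{tech}). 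So you identified the correct technical engine (asymptotic synchronization controlling the logarithmic derivative of the expansion rate along the flow), but you feed it into an impossible object; note also that synchronization makes the rates nearly constant \emph{along flow lines} rather than ``matching'' stable against unstable rates, and that a final $C^\infty$-approximation step, using openness of the strong-adaptation condition, is required since $\alpha_u,\alpha_s$ and the averaged forms are a priori only $C^1$.

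A second, smaller gap is in $(2)\Rightarrow(1)$: after invoking Mitsumatsu and Eliashberg--Thurston you leave the upgrade from domination to hyperbolicity as an unspecified ``Lyapunov/cone estimate.'' The paper needs no such estimate: by Lemma~\ref{reebquad}, the Reeb field of any supporting contact form $\alpha_+=\alpha_u-\alpha_s$ lies in the plane $\langle X, r_ue_s+r_se_u\rangle$, where $r_u,r_s$ are the expansion rates of the norms induced by $\alpha_+$; the hypothesis $R_{\alpha_+}\subset\xi_-$ for a supporting \emph{negative} contact structure forces this plane into the second/fourth quadrant of $E\oplus F$, i.e.\ $r_s<0<r_u$ pointwise. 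That is precisely an adapted norm, hence a hyperbolic splitting and Anosovity (Corollary~\ref{strcor}). Your cone-field sketch may well be completable, but as written it is a placeholder where the paper has a short pointwise argument, and it obscures the actual mechanism by which the position of the Reeb field encodes the signs of the expansion rates.
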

 
\vskip0.05cm

%\noindent\fbox{%
   % \parbox{\textwidth}{%
%\textbf{Assumptions:} 
In the rest of this paper, unless stated otherwise, we assume that $M$ is a closed oriented connected 3-manifold, $X$ is a smooth (i.e. $C^\infty$) vector field on $M$ and $X^t$ is its generated flow. Also, we assume all (projectively) Anosov flows to be {\em transversally oriented}, i.e. have oriented invariant bundles. This condition is always satisfied, possibly after lifting to a double cover of $M$. Even though $C^2$-regularity is sufficient for our main results, we take all the vector fields and contact structures in this paper to be $C^\infty$, unless stated otherwise, for the sake of a smoother presentation. One can further refine the results in terms of regularity.
 %   }%
%}
%\vskip0.2cm

\vskip0.5cm
 
The conditions (2), (3) and (4) in the above theorem provide different flavors of our characterization in terms of contact form geometry, bi-contact geometry and Reeb dynamics, respectively.  We call a contact form, bi-contact structure or Reeb flow {\em strongly adapted} to $X$, if it satisfies the condition (2), (3) or (4), respectively, in Theorem~\ref{A}.

Condition (2) implies that the Anosovity of $X$ can be described as a quite elementary open local condition on a {\em Legendrian}, i.e. tangent, flow of a contact structure. This in particular results in a contact geometric local model for Anosov 3-flows. While the foliation theoretic local picture of these flows has been proven to be extremely effective in answering question in Anosov dynamics, we hope that adopting a contact geometric description could enhance the theory by application of new elements from contact topology, a theory rich in analytical and topological aspects. In particular, subtle geometric analytical theories (e.g. Floer theories) can be applied in this setting, since the contact geometric model enjoys regularity as high as the generating vector field, thanks to the openness of the contact condition. This is contrary to the foliations underlying Anosov flows which are often of low regularity, and as a result, the methods applied in the classical foliation theoretic approach are very topological in nature. Furthermore, perturbations of our geometric model in a neighborhood does not affect the geometry of the flow far away, a sharp contrast to the invariant foliations which are determined by the long term behavior of the flow. This has often created technical difficulty in important aspects of Anosov dynamics related to surgery and gluing operations, as it is natural to require a suitable deformation theory. As contact structures are {\em stable} structures themselves, unlike integrable plane fields tangent to the invariant foliations, we hope to develop a deformation theory based on contact geometry, which is compatible with the Anosov setting.

Condition (3) puts our characterization in the context of the bi-contact geometric approach initiated by Mitsumatsu and Eliashberg-Thurston \cite{mit1,et} (see Section~\ref{s2} for more in-depth account). As mentioned above, they characterized projective Anosovity of a 3-flow by the existence of a {\em supporting} bi-contact structure $(\xi_-,\xi_+)$. Theorem~\ref{A} improves the dictionary between notions in Anosov dynamics and bi-contact geometric conditions, by describing the additional contact geometric condition required to achieve a correspondence for Anosov flows. That is, $\xi_-$ containing a Reeb vector field of $\xi_+$. This is in fact in the same spirit of the characterization of incompressible Anosov 3-flows we previously introduced in \cite{hoz4}, where the necessary and sufficient condition is both $\xi_-$ and $\xi_+$ containing Reeb vector fields of one another, simultaneously, a condition we call being {\em strongly bi-adapted}. This characterization is established in $C^1$ category in \cite{hoz4}. But we argue in Remark~\ref{biadaptedrem} that it actually holds in the $C^\infty$ category. This can essentially be viewed as Theorem~\ref{A}, in the presence of an invariant volume form. The incompressibility condition implies symmetries in the geometry of a supporting bi-contact structure, allowing us to bypass the main technical difficulties in the proof of Theorem~\ref{A}, i.e. the {\em asymptotic synchronization process}. 
 
We summarize the state of affairs in the following table. The row for projectively Anosov flows describes the observation of Mitsumatsu and Eliashberg-Thurston and Theorem~\ref{A} is responsible for the row for Anosov flows. The {\em domination relation} in the following table, denoted by $\prec$, is introduced in Section~\ref{s2} and we refer the reader to \cite{gour} for a thorough discussion.

\vskip0.1cm
 
\begin{tabularx}{1\textwidth} { 
  | >{\centering\arraybackslash}X
    | >{\centering\arraybackslash}X 
  | >{\centering\arraybackslash}X 
  | >{\centering\arraybackslash}X | }
  \caption{Dictionary between notions in Anosov dynamics and bi-contact geometric conditions}\label{tab:1}\\
 \hline
\textbf{dynamical condition} & \textbf{Anosov dynamics} & \textbf{bi-contact geometry} & \textbf{geometric condition} \\
 \hline
 \hline
$TM/\langle X\rangle \simeq E\oplus F$ $X_*|_E\prec X_*|_F$  & Projectively Anosov flows  & Bi-contact structures & $X\subset \xi_-\pitchfork \xi_+$  \\
\hline
$TM\simeq \langle X\rangle \oplus E^s\oplus E^u$ $X_*|_{E^s} \prec 0 \prec X_*|_{E^u}$  & Anosov flows  & Strongly adapted bi-contact structures & $X\subset  \ker{\alpha_-} \pitchfork \ker{\alpha_+}$ $\alpha_-(R_{\alpha_+})=0$\ \\
\hline
$TM\simeq \langle X\rangle \oplus E^s\oplus E^u$ $||X_*|_{E^s}|| =||- X_*|_{E^u}||$  & Incompressible Anosov flows  & Strongly bi-adapted bi-contact structures & $X\subset  \ker{\alpha_-} \pitchfork \ker{\alpha_+}$ $\alpha_-(R_{\alpha_+})=\alpha_+(R_{\alpha_-})=0$ \\
\hline
%$TM\simeq \langle X\rangle \oplus E^s\oplus E^u$ $X_*|_{E^s} =- X_*|_{E^u}\equiv C$ $E^s\oplus E^u:\ C^\infty$ & Algebraic Anosov flows  & $C^\infty$ Strongly bi-adapted bi-contact structures & $X\subset  \ker{\alpha_-} \pitchfork \ker{\alpha_+}$ $\alpha_-(R_{\alpha_+})=\alpha_+(R_{\alpha_-})=0: C^\infty$ \\
%\hline

\end{tabularx}

\vskip1cm

Finally, condition (4) provides a Reeb dynamical approach to the study of Anosov 3-flows. We note that the Reeb vector field $R_+$ in Theorem~\ref{A} determines the underlying bi-contact structure as $(\xi_-,\xi_+)=(\langle X,R_+ \rangle,\langle X,[X,R_+] \rangle)$. In other words, this defines Anosov dynamics as a certain interplay between a pair of Legendrian and Reeb flows $(X,R_+)$ of a contact structure.

\begin{quote}
\centering
{\em Anosov 3-flows are Legendrian flows with a special property: they form a contact structure of opposite orientation, when paired with some Reeb vector field of the underlying contact structure.}
\end{quote}

Almost all the literature on Anosov dynamics is from the view point of the Legendrian vector field $X$. This paper is an invitation to study this interplay from the viewpoint of the Reeb dynamics of $R_+$. In order to understand the local description of Theorem~\ref{A} better, note that given a flow segment $\gamma$ of an Anosov flow $X^t$ and a strongly adapted Reeb vector field $R_+$, one observes distinct behaviors of nearby flow lines when moving in the directions of $R_+$ or $[X,R_+]$ (see Figure~1). In the classical description, these two strands of flow lines are separated by the invariant foliations determined by the long term behavior of the underlying Anosov flow. In our theory however, the two are separated by a geometric condition, i.e. the {\em strong adaptation} of $X$ and $R_+$.

%\textcolor{red}{In particular, this results in a different visualization of an Anosov flow, achieved by flowing a segment $\gamma$ of $X^t$ along $R_+$ to obtain a $R_+$-invariant disk $(-\epsilon,\epsilon)\times \gamma$, on which the nearby flow lines rise from one side and fall into the other, while rotating as we push the annulus $(-\epsilon,\epsilon)\times \gamma$ normally along the Legendrian vector field $[R_+,X]$. See Figure 1.}

%       \begin{figure}[h]
%\centering
%\begin{overpic}[width=0.55\textwidth]{flowline.jpg}
   %     \put(230,155){$X$}
      %    \put(55,200){$R_+$}

 % \end{overpic}

%\caption{Local behavior of Anosov flow lines from the viewpoint of strongly adapted geometry}
%\end{figure}

\begin{figure}[h]\label{localanosov}

 \begin{subfigure}[b]{0.4\textwidth}

  \center \begin{overpic}[width=5.6cm]{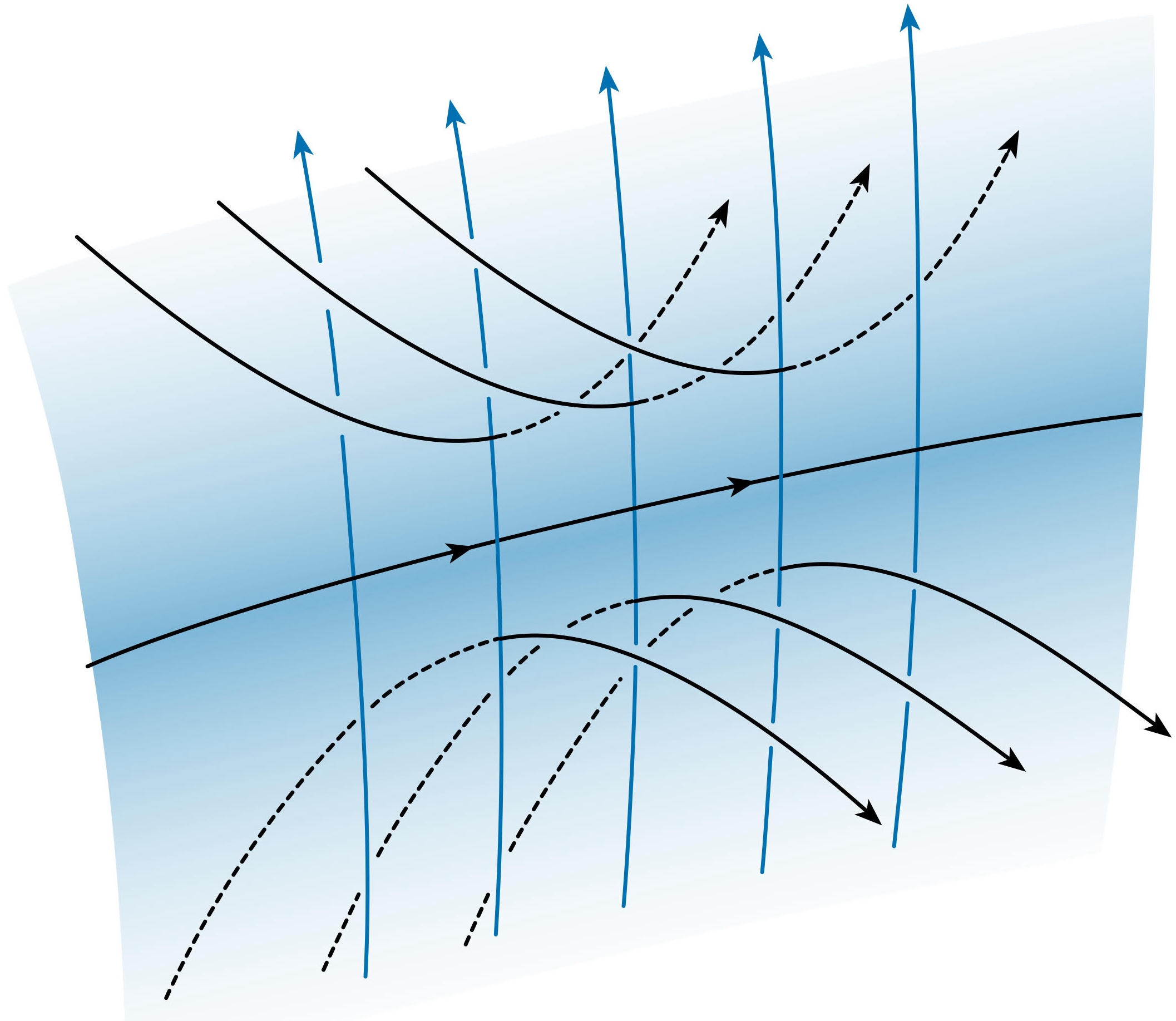}

  \put(70,134){$R_+$}
    \put(99,63){$X$}
       \end{overpic}
    \caption{Nearby flow lines in the direction of (strongly adapted) $R_+$}
    \label{fig:1}
  \end{subfigure}
  \hspace{2cm}
  \begin{subfigure}[b]{0.4\textwidth}
  \center \begin{overpic}[width=7cm]{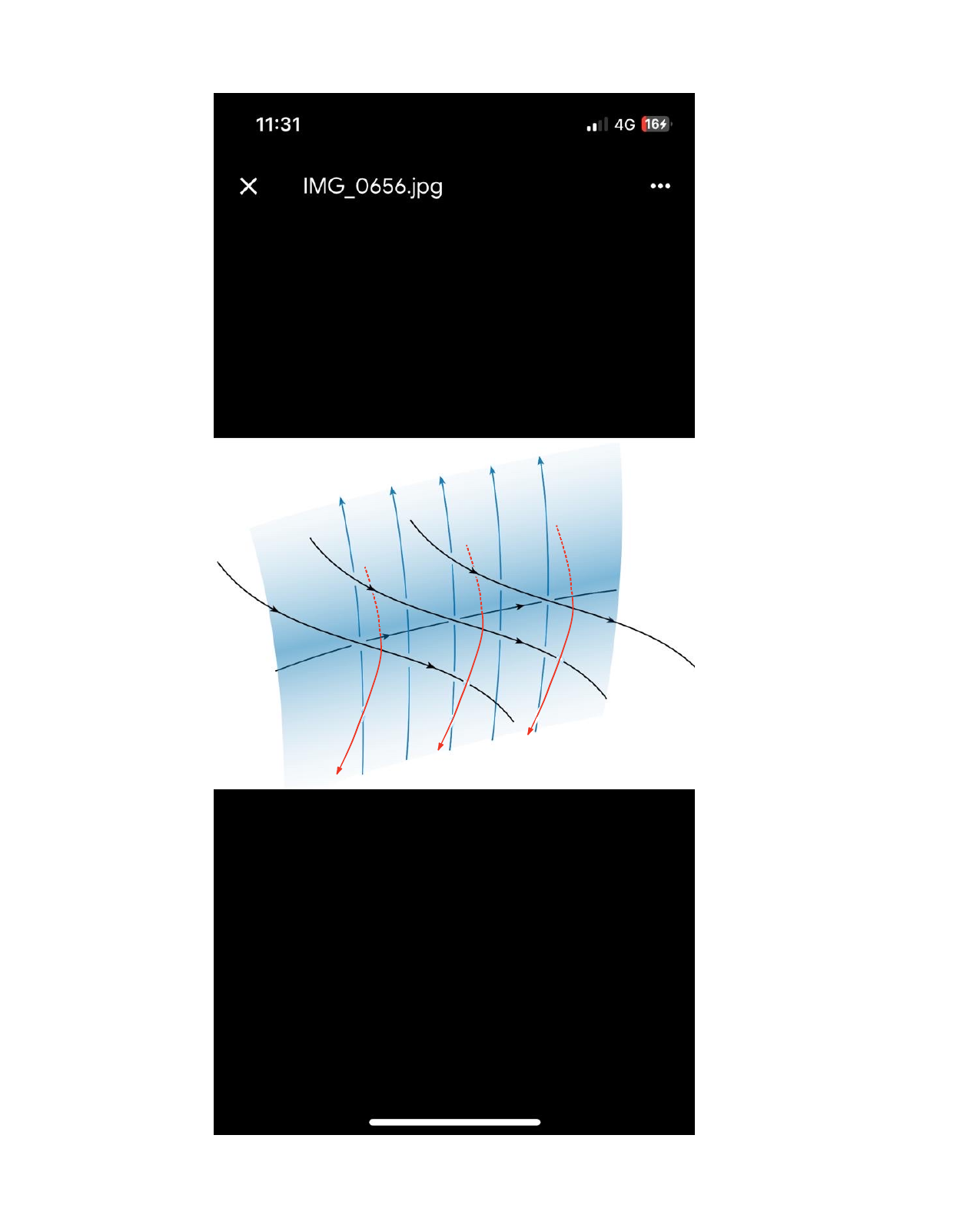}
  \put(82,134){$R_+$}
    \put(123,65){$X$}
    \put(14,10){$[X,R_+]$}

  \end{overpic}
 %   \vspace{0.1cm}
    \caption{Nearby flow lines in the direction of $[X,R_+]$}
    \label{fig:2}
  \end{subfigure}
  
 \caption{Local behavior of flow lines in in the strongly adapted contact geometry of Anosov flows}
\end{figure}

The local picture above appears naturally in the case of algebraic geodesic flows, and in the neighborhood of periodic orbits lying on {\em quasi-transverse tori}, which are foliated by the standard cyclic Reeb flows in those examples. In fact, this local description, together with the surgery techniques of Foulon-Hasselblatt \cite{fh}, is already used extensively by Salmoiraghi \cite{sal1,sal2}, in order to develop a bi-contact geometric theory of (projectively) Anosov surgeries. The flexible and high regularity setting of strongly adapted contact geometry allows Salmoiraghi to introduced new contact geometric surgeries, unifying various classical surgery operations, which were designed to resolve historical questions in Anosov dynamics. One can see that Salmoiraghi's surgery operation in \cite{sal2} preserves the strongly adapted geometry and hence, paired with Theorem~\ref{A}, it {\em localizes} the effect of gluing operations and the required deformations on the dynamics of the flow surgeries.

It is noteworthy that as a corollary, Theorem~\ref{A} allows one to make the symplectic characterization at the core of the 4-dimensional theory \cite{hoz3,massoni,clmm,hoz6} more efficient in the following sense. In the symplectic theory, Anosovity is established by two independent {\em Liouville conditions} on a pair of positive and negative contact forms $(\alpha_-,\alpha_+)$. By Theorem~\ref{A}, one can always set $\alpha_-:=\mathcal{L}_X\alpha_+$ for an appropriate choice of $\alpha_+$ and simplify the necessary Liouville condition to only one.

\begin{corollary}
The projectively Anosov flow $X^t$ is Anosov, if and only if, there exists a Liouville domain (manifold) of the type
$$\alpha:=(1-s)\mathcal{L}_X \alpha_+ +(1+s)\alpha_+ \ \ \ \ (:=e^{-s}\mathcal{L}_X \alpha_+ +e^s\alpha_+) \ \ \ \ \ \ 
\text{on}\ \  [-1,1]_s\times M\ \ \ \  (\mathbb{R}_s\times M),$$ for some positive contact form $\alpha_+$ such that $\alpha_+(X)=0$.
\end{corollary}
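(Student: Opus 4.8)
The plan is to read this as a direct consequence of Theorem~\ref{A} together with a single explicit computation of the symplectic form $d\alpha$, the entire gain coming from the vanishing of one of the two cross terms in the Mitsumatsu--Liouville pairing. I would split the equivalence into its two implications.

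I would first dispatch the backward (``if'') implication, which is the soft one. Suppose a Liouville manifold $\alpha = e^{-s}\mathcal{L}_X\alpha_+ + e^s\alpha_+$ exists for some positive contact $\alpha_+$ with $\alpha_+(X)=0$, and write $\alpha_- := \mathcal{L}_X\alpha_+$. Splitting $d\alpha = ds\wedge(e^s\alpha_+ - e^{-s}\alpha_-) + (e^{-s}d\alpha_- + e^s d\alpha_+)$ and using that a $4$-form assembled from $M$-forms alone vanishes, one gets $(d\alpha)^2 = 2\,ds\wedge(\beta\wedge\gamma)$ with
\[
\beta\wedge\gamma = e^{2s}\,\alpha_+\wedge d\alpha_+ \;-\; e^{-2s}\,\alpha_-\wedge d\alpha_- \;+\;\big(\alpha_+\wedge d\alpha_- - \alpha_-\wedge d\alpha_+\big).
\]
Positivity of $(d\alpha)^2$ as $s\to-\infty$ forces $\alpha_-\wedge d\alpha_- <0$, i.e. $\mathcal{L}_X\alpha_+$ is a negative contact form. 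Since $\alpha_+$ is a positive contact form with $\alpha_+(X)=0$ by hypothesis, condition (2) of Theorem~\ref{A} is met and $X$ generates an Anosov flow. For the compact Liouville-domain normalization with weights $1\mp s$ the same conclusion follows by inspecting the concave boundary $\{-1\}\times M$.

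For the forward (``only if'') implication, assume $X^t$ is Anosov. Theorem~\ref{A}(2) provides a positive contact form $\alpha_+$ with $\alpha_+(X)=0$ such that, by Cartan's formula, $\alpha_- := \mathcal{L}_X\alpha_+ = \iota_X d\alpha_+$ is a negative contact form; thus $a := \alpha_+\wedge d\alpha_+ > 0$ and $c := -\alpha_-\wedge d\alpha_- > 0$ are precisely the two contact conditions, already guaranteed. The decisive simplification is that one cross term drops out identically,
\[
\alpha_-\wedge d\alpha_+ = (\iota_X d\alpha_+)\wedge d\alpha_+ = \tfrac12\,\iota_X\!\big((d\alpha_+)\wedge(d\alpha_+)\big) = 0,
\]
because $(d\alpha_+)^2$ is a $4$-form on the $3$-manifold $M$. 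This is exactly the reduction of the two independent Liouville conditions to a single one: measuring $3$-forms against a fixed positive volume form, the requirement $\beta\wedge\gamma>0$ for all $s$ is equivalent to positivity of its minimum over $s$, namely the single pointwise inequality $2\sqrt{ac} + \alpha_+\wedge d\alpha_- > 0$.

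The hard part is establishing this last inequality, and this is where I expect the main obstacle to lie. It is not automatic for an arbitrary representative produced by Theorem~\ref{A}, since the sign of the surviving cross term $\alpha_+\wedge d\alpha_-$ is a priori uncontrolled; in particular, genuine Anosovity (simultaneous contraction and expansion), rather than the merely projectively Anosov hypothesis, must enter. I would exploit the conformal freedom $\alpha_+ \mapsto e^g\alpha_+$, which preserves $\xi_+$, the normalization $\alpha_+(X)=0$, and the relation $\alpha_-=\mathcal{L}_X\alpha_+$ up to the controlled change $\alpha_-\mapsto e^g(\alpha_- + (Xg)\alpha_+)$, together with the asymptotic synchronization of adapted norms underlying the proof of Theorem~\ref{A}: synchronizing the stable and unstable rates is exactly what balances the two positive quantities $a$ and $c$ against the indefinite cross term and secures $2\sqrt{ac} + \alpha_+\wedge d\alpha_- > 0$ everywhere. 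Once this holds on $\mathbb{R}_s\times M$, the slices $\{s\}\times M$ are of contact type and one truncates to the Liouville domain on $[-1,1]_s\times M$; in the linearly weighted normalization the analogous computation reduces positivity of the affine-in-$s$ form $(d\alpha)^2$ to its two endpoints, the one at $s=1$ being the positive-contact condition and the one at $s=-1$ a single inequality again delivered by the synchronization.
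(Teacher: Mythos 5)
Your algebraic reduction is correct and matches what the paper intends: since $\alpha_+(X)=0$ gives $\mathcal{L}_X\alpha_+=\iota_Xd\alpha_+$, the cross term $(\mathcal{L}_X\alpha_+)\wedge d\alpha_+=\tfrac12\iota_X\big(d\alpha_+\wedge d\alpha_+\big)$ vanishes, and the symplectic condition on $\mathbb{R}_s\times M$ collapses to the single pointwise inequality $2\sqrt{ac}+b>0$, where $a\,\Omega=\alpha_+\wedge d\alpha_+$, $b\,\Omega=\alpha_+\wedge d(\mathcal{L}_X\alpha_+)$ and $c\,\Omega=-(\mathcal{L}_X\alpha_+)\wedge d(\mathcal{L}_X\alpha_+)$. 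You also put your finger on the right spot: this inequality is the whole content of the forward implication. The genuine gap is that the mechanism you invoke to secure it --- conformal rescaling plus the asymptotic synchronization behind Theorem~\ref{A} --- provably cannot deliver it. Write $\alpha_+=\alpha_u-\alpha_s$ with induced rates $r_u,r_s$ as in the paper. In the synchronized regime ($X\cdot r_u,\,X\cdot r_s\approx 0$) one computes $a=r_u-r_s$, $b=r_u^2-r_s^2$, $c=-r_ur_s(r_u-r_s)$, hence
\begin{equation*}
2\sqrt{ac}+b=(r_u-r_s)\Big(2\sqrt{-r_ur_s}+r_u+r_s\Big),
\end{equation*}
which is positive exactly when $-r_s<(3+2\sqrt{2})\,r_u$ pointwise. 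Now fix a periodic orbit $\gamma$ of period $T$. By Proposition~\ref{exppro}(5), for \emph{every} choice of supporting contact form the quantities $\frac1T\int_\gamma r_u\,dt$ and $\frac1T\int_\gamma(-r_s)\,dt$ equal the Lyapunov exponents $\mu_\gamma,\lambda_\gamma$ of $\gamma$, and by Remark~\ref{perexp} synchronization drives $r_u$ and $-r_s$ to precisely these constants along $\gamma$. So if the flow has a periodic orbit with $\lambda_\gamma\geq(3+2\sqrt{2})\mu_\gamma$ --- and Anosov flows with arbitrarily dissipative periodic orbits exist (e.g.\ suspensions of Anosov diffeomorphisms with a fixed point whose unstable eigenvalue is close to $1$ and whose stable eigenvalue is very small) --- then synchronization makes the inequality \emph{fail} along $\gamma$, and no rescaling of $\alpha_+$, $\alpha_u$, $\alpha_s$ or reparametrization of $X$ can repair this, because the obstruction is expressed in terms of choice-independent periodic-orbit data. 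In other words, synchronization is exactly the wrong tool here: it turns the required inequality into a ratio bound on Lyapunov exponents that a general Anosov flow simply violates. Any correct proof of the forward implication must exploit the freedom in $\alpha_+$ in an essentially non-synchronized way, and the computation above indicates that the difficulty is not an artifact of your method (the paper itself offers no argument for this corollary beyond the same one-line reduction, so the obstacle you flagged is real and unresolved by the route both you and the paper suggest).

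There is a second, smaller gap in your backward implication. Letting $s\to-\infty$ in $(d\alpha)^2>0$ only yields $(\mathcal{L}_X\alpha_+)\wedge d(\mathcal{L}_X\alpha_+)\leq 0$: the Liouville condition is compatible with $c(p)=0$ at points where $b(p)\geq0$, so strict negative contactness of $\mathcal{L}_X\alpha_+$ does not follow from the limit. Similarly, in the affine model the boundary slice $\{-1\}\times M$ gives the strict inequality $(\alpha_+-\mathcal{L}_X\alpha_+)\wedge d(\mathcal{L}_X\alpha_+)>0$, which again does not by itself say that $\mathcal{L}_X\alpha_+$ is a negative contact form. Since condition (2) of Theorem~\ref{A} requires $\mathcal{L}_X\alpha_+$ to be an honest negative contact form, you need an additional argument to exclude the degeneracy locus (or a different path from the non-strict inequality to Anosovity) before Theorem~\ref{A} can be applied.
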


\vskip0.5cm

\textbf{1.2. The space of strong adaptations:} For many purposes, which includes deriving new contact geometric invariants from our local model, we need to study the space of the geometries described in Theorem~\ref{A}. We denote the space of Anosov flows on $M$, up to (unoriented) reparametrization, by ${\mathcal{A}}(M)$ and define the space  of {\em strong adaptations} on $M$, denoted by $\mathcal{SA}^+(M)$. See Section~\ref{s5} for a thorough discussion and in particular, Corollary~\ref{saspace} for the following equivalence (we naturally identify contact forms up to constant scaling).
$$\mathcal{SA}^+(M):=\bigg\{(\alpha_+,\langle X \rangle)| \alpha_+ \text{ is a strongly adapted contact form for }\langle X \rangle \in \mathcal{A}(M) \bigg\}/ (\mathbb{R}\backslash\{0\},.)$$
$$ \simeq \bigg\{(R_+,\langle X \rangle)| R_+ \text{ is a strongly adapted Reeb flow for }\langle X \rangle\in \mathcal{A}(M) \bigg\}/ (\mathbb{R}\backslash\{0\},.)$$
$$ \simeq \bigg\{(\xi_-,\xi_+)| (\xi_-,\xi_+) \text{ is a strongly adapted bi-contact structure for } \langle \xi_-\pitchfork\xi_+ \rangle\in \mathcal{A}(M) \bigg\} .$$

We investigate various fibrations of the space of strong adaptations and summarize our study in the following theorem. Among other things, this implies that the space of strong adaptations is homotopy equivalent to the space of Anosov flows. This is in particular important for various classification problems, since it allows one to exploit the contact geometric invariants of strong adaptations as homotopy invariants of Anosov 3-flows and study the homotopy type of the space of Anosov 3-flows. For the 4-dimensional symplectic geometric theory of Anosov flows, a similar theorem was proved by Massoni \cite{massoni} in 2022, which was later refined and generalized in \cite{hoz6}. Here, by {\em acyclic} fibration, we mean one with contractible fibers.

\begin{mainthm}\label{B}
The map $$\begin{cases}
\pi_A:\mathcal{SA}^+(M)\rightarrow {\mathcal{A}}(M)\\
(R_{+},\langle X \rangle)\mapsto \langle X \rangle
\end{cases}$$ is an acyclic fibration. In particular, $\mathcal{SA}^+(M)$ is homotopy equivalent to ${\mathcal{A}}(M)$.
\end{mainthm}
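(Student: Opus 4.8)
The plan is to show that $\pi_A$ is a Serre fibration whose fibers are contractible; granting this, the long exact sequence of homotopy groups collapses to give that $\pi_A$ is a weak homotopy equivalence, and since $\mathcal{SA}^+(M)$ and $\mathcal{A}(M)$ carry the homotopy type of CW complexes (being spaces of smooth sections in suitable $C^\infty$ topologies), Whitehead's theorem promotes this to an honest homotopy equivalence. It is convenient to factor $\pi_A$ through the forgetful map to the space of \emph{supporting} bi-contact structures of the underlying flow, using Corollary~\ref{saspace} to pass freely between the contact form, Reeb, and bi-contact descriptions; since a composition of acyclic fibrations is again acyclic, this isolates the two sources of contractibility, namely the choice of a supporting bi-contact cone and its refinement to a strongly adapted datum, so that they may be handled separately.

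For the fibers, fix $\langle X\rangle\in\mathcal{A}(M)$ with its Anosov splitting $TM=\langle X\rangle\oplus E^s\oplus E^u$, and identify $\pi_A^{-1}(\langle X\rangle)$ with the space of strongly adapted contact forms $\alpha_+$ for $X$, i.e.\ positive contact forms with $\alpha_+(X)=0$ and $\mathcal{L}_X\alpha_+$ negative contact. I would first recall the classical fact that positive contact structures containing $X$ correspond to sections of the open arc-bundle in $\mathbb{P}(TM/\langle X\rangle)$ cut out by the \emph{positive cone} of the splitting; this fiberwise convex section space is contractible. The genuine content is the strong-adaptation condition $\alpha_-(R_{\alpha_+})=0$, equivalently the negativity of $\mathcal{L}_X\alpha_+$, which is neither linear nor local in $\alpha_+$. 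Here I would invoke the asymptotic synchronization of adapted norms to produce, once the cone is fixed, a \emph{canonical} conformal factor rendering $\alpha_+$ strongly adapted, and to exhibit the set of admissible factors as an interval, hence convex. Combined with the cone-convexity this deformation retracts the fiber onto a synchronized reference form, proving contractibility.

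To verify the homotopy lifting property it suffices to trivialize $\pi_A$ locally over $\mathcal{A}(M)$. Near a given Anosov flow I would use structural stability: nearby Anosov flows are orbit equivalent and their invariant bundles $E^s,E^u$ vary continuously, so the associated positive and negative cones, and with them the admissible families of contact forms and the synchronization data above, transport continuously over a neighborhood in $\mathcal{A}(M)$. This yields a local identification of the total space with (base)\,$\times$\,(fiber), the fiber being the contractible space of the previous step, and patching these charts exhibits $\pi_A$ as a fibration. Equivalently one may lift a given family $\langle X_s\rangle$ together with a prescribed lift over a subcomplex by transporting the cones and then filling in the remaining adaptations through the fiberwise contraction.

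The main obstacle is the contractibility of the fibers rather than the fibration property. Because strong adaptation constrains $\mathcal{L}_X\alpha_+$ and the Reeb dynamics of $\alpha_+$, it is a nonlinear, dynamically global condition that is not preserved under naive convex combinations of contact forms, so exhibiting the strongly adapted forms as a contractible, and not merely nonempty (cf.\ Theorem~\ref{A}), set is exactly where the asymptotic synchronization must be run in a canonical, parametrized fashion. Making that synchronization continuous in the flow, so that it simultaneously yields the fiberwise contraction and the local triviality over $\mathcal{A}(M)$, is the crux of the argument.
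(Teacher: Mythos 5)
Your high-level skeleton (factor $\pi_A$ through an intermediate forgetful map, then compose acyclic fibrations) is the same as the paper's first proof, but two of your three key steps have genuine problems. First, the factorization you choose is through the space of supporting bi-contact structures $\mathcal{C}^b(M)$, and this cannot work: by Lemma~\ref{moserlemma} the map $(\alpha_+,\langle X\rangle)\mapsto(\langle R_{\alpha_+},X\rangle,\ker\alpha_+)$ is an \emph{embedding} of $\mathcal{SA}^+(M)$ into $\mathcal{C}^b(M)$, and by Remark~\ref{phen} its image is a \emph{proper} subspace — for a generic supporting bi-contact structure $(\xi_-,\xi_+)$ there is no Reeb field of $\xi_+$ tangent to $\xi_-$ (this happens for all choices of $\xi_-$ only in the incompressible case). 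So the map $\mathcal{SA}^+(M)\to\mathcal{C}^b(M)$ has many empty fibers and is not an acyclic fibration, and acyclicity of $\mathcal{C}^b(M)\to\mathcal{A}(M)$ tells you nothing about the restriction to the image. The paper instead factors through $\mathcal{C}^+(M)$ (forgetting down to the positive contact structure alone), where surjectivity of $\mathcal{SA}^+(M)\to\mathcal{C}^+(M)$ is guaranteed by Lemma~\ref{grayf}.

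Second, your treatment of fiber contractibility defers the entire difficulty to an unconstructed ``canonical, parametrized synchronization,'' and your surrounding claims point in the wrong direction. You assert that strong adaptation ``is not preserved under naive convex combinations,'' but the paper's key computation (Lemma~\ref{l2}) shows the opposite in the relevant sense: fixing the kernel $\xi_+$, the set of conformal factors $h$ with $e^h\alpha_+$ strongly adapted is \emph{convex} (logarithmic convexity of the fiber of $\mathcal{SA}^+(M)\to\mathcal{C}^+(M)$), proved by a direct computation with the Moser-type equation for $R_{e^h\alpha_+}$; similarly, Proof 2 rests on convexity of (strongly) adapted norms (Lemma~\ref{adnormconvex}). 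Asymptotic synchronization (Theorem~\ref{expunif}) enters only to establish \emph{non-emptiness}, never as a fiberwise contraction, and it is not canonical in any way that would make it continuous in the flow. Your claim that positive contact structures containing $X$ ``correspond to sections of the open arc-bundle cut out by the positive cone'' is also false as stated: the contact condition is $r_u>r_s$, a condition on derivatives along the flow of the coefficient functions, not a pointwise cone condition on the plane field, so the section space of the cone bundle is strictly larger than the space of supporting contact structures. Finally, your local triviality argument via structural stability is both heavier and delicate — orbit equivalences of nearby Anosov flows are in general only Hölder homeomorphisms and cannot transport smooth contact forms — whereas the paper trivializes $\mathcal{C}^+(M)\to\mathcal{A}(M)$ elementarily: fix $v_+\subset\xi_+$ transverse to $X$; then $\langle\bar X,v_+\rangle$ is still a supporting contact structure for any $C^\infty$-nearby Anosov field $\bar X$ by openness of the contact condition, and each fiber is identified with $C^\infty(M;\mathbb{R})$ by the flow pushforward $f\mapsto X^f_*\xi_+$.
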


It is noteworthy that the above fibration structure can be refined by understanding its interaction with other spaces related to the contact geometry of strong adaptations and we leave the more involved statement to Corollary~\ref{refine}.

\begin{remark}
Notice that strong adaptations can be easily defined on manifolds with boundary or non-compact manifolds and subsequently, they define a notion of hyperbolicity in the dynamics of those manifolds. There are other classical notions of hyperbolic dynamics in such categories ({\em uniform hyperbolicity}, {\em hyperbolic plugs}, etc.) and the relation between those and strong adaptations remains unknown at this point. For instance on $\mathbb{R}^3$, one can easily check that the pair $( R_+:=\partial_z,\langle X\rangle:=\langle z\partial_x -\partial_y+yz\partial_z \rangle )$ satisfies the local condition of strong adaptations.
\end{remark}

\vskip0.5cm
 
\textbf{1.3. Asymptotic synchronization process:} Finally, we would like to highlight that a technical ingredient in the proof of Theorem~\ref{A} is an {\em asymptotic synchronization} process we apply to the invariant bundles of an Anosov flow. More specifically, we show that for a general Anosov flow, one can assume the expansion and contraction in the unstable and stable invariant bundles happen at {\em almost} constant rates. This process can be interpreted as {\em almost synchronization} of an Anosov flow via deformation of the norm $||.||$ on $TM$. This is while synchronization, i.e. having constant expansion rates, is only possible in the incompressible case and possibly after a reparametrization of the flow. The main idea is to use an averaging technique introduced by Holmes \cite{holmes} and popularized by Hirsch-Pugh-Shub \cite{invariant}. They use this technique to show that any Anosov system admits an {\em adapted metric}, which can be thought of as a weaker notion of {\em synchronizing} a norm along the flow. Their result was then generalized to various settings by Gourmelon \cite{gour}. We prove an asymptotic synchronization theorem for flow actions on vector bundles, which gives a strong refinement of the result of Hirsch-Pugh-Shub \cite{invariant} and Gourmelon \cite{gour} for adapted norms in this setting. We show that the norm in the definition of an Anosov flow can be deformed to have expansion rates arbitrarily close to the {\em Lyapunov exponents} at each point, with derivatives along the flow vanishing exponentially fast during the deformation. We believe this theorem to be of broader use. This theorem is in fact already exploited in \cite{hoz5} to compute the infimum of a the {\em Dirichlet energy} of a contact Anosov flow in terms of its {\em Liouville entropy}, contributing to a classical question in the theory of {\em contact metrics}. We defer the statement of this technical theorem to Theorem~\ref{expunif} in Section~\ref{s4}, where the necessarily notions and discussions are brought beforehand.
%We present this theorem in a general setting for the sake of future references, and one should refer to Section~\ref{s2} for a more thorough discussion on the notion of {\em expansion rates}.

% \begin{mainthm}\label{expunif}\label{C}
 %Suppose $X^t$ is a complete flow on a manifold of arbitrary dimension $M$, inducing an action $X^t_*$ on the line bundle $E\rightarrow M$ equipped with a fiberwise norm $||.||_0$, such that both are $N\in \mathbb{N}\cup\{ \infty\}$ times differentiable along $X^t$.
%There exists a deformation $||.||_T$ for $T\in [0,\infty)$ of $||.||_0$, {\em exponentially uniformizing $||.||_0$} along $X^t$, by which we mean that, the induced expansion rate of $||.||_T$
%$$r_T:=\partial_t \cdot \ln{\big(||X_*^t ||_T\big)}\bigg|_{t=0}:M\rightarrow \mathbb{R}$$
%satisfy
 %$$r_{T}-\frac{1}{T}[\int_0^T r_0(\tau)d\tau ]\mbox{\Large$\overset{\text{exp.}}{\longrightarrow}$} 0$$
 %and
%$$X^{(j)}\cdot r_{T}-\frac{1}{T}[X^{(j-1)}\cdot r_{0}(T)-X^{(j-1)}\cdot r_{0}]\mbox{\Large$\overset{\text{exp.}}{\longrightarrow}$} 0$$
%for $1\leq j\leq N-1$, where $X^{(j)}$ refers to the $j$th derivative along $X^t$ and $r_0(t)=r_0\circ X^t$.
%\end{mainthm}

Applying Theorem~\ref{expunif} to the invariant bundles of an Anosov 3-flow allows one to deform the supporting contact geometry in order to make it asymptotically close to the contact geometry of an Anosov 3-flow with constant divergence. Then, the contact geometric characterization of Theorem~\ref{A} is obtained by applying ideas similar to ones used in the characterization of the incompressible case in \cite{hoz4}.

Theorem~\ref{expunif} in fact yields a much stronger convergence than the one needed in the proof of Theorem~\ref{A}. This results in a refinement of the space of strong adaptation by assuming {\em extra degrees of adaptations}. This means that the contact geometric model of an arbitrary Anosov flow can be set up to be arbitrarily close, in terms of its variations along the flow, to one with synchronized invariant bundles. 

\begin{corollary}\label{refcor}
We have the filteration of $\mathcal{SA}^+(M)$ by non-empty subspaces as below.
 $$...\ \mbox{\Large$\subset$}\bigg\{\substack{\text{Strongly adapted $\alpha_+$} \\  \text{Strongly adapted $\mathcal{L}_X\alpha_+$} \\ \text{Strongly adapted $\mathcal{L}_X\mathcal{L}_X \alpha_+$}} \bigg\} \mbox{\Large$\subset$}
 \bigg\{\substack{\text{Strongly adapted $\alpha_+$} \\  \text{Strongly adapted $\mathcal{L}_X\alpha_+$}} \bigg\} \mbox{\Large$\subset$}
\  \mathcal{SA}^+(M).$$
\end{corollary}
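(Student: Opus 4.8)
The inclusions are immediate from the definitions: requiring $\mathcal{L}_X^j\alpha_+$ to be strongly adapted for $j=0,1,\dots,k$ is strictly more restrictive than requiring it only for $j=0,\dots,k-1$, so the displayed subspaces are automatically nested. The entire content is therefore the non-emptiness of the $k$-th subspace for every $k$, and my plan is to extract each of them from the strong convergence in the asymptotic synchronization Theorem~\ref{expunif}, with $k$ fixed in advance.

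First I would fix $\langle X\rangle\in\mathcal{A}(M)$ and recall from the proof of Theorem~\ref{A} that a strongly adapted $\alpha_+$ is produced from an adapted norm on $TM$ via the coframe dual to the splitting $TM=\langle X\rangle\oplus E^s\oplus E^u$. Writing $\alpha^s,\alpha^u$ for the two annihilator $1$-forms (both killing $X$), the invariance of the bundles gives, schematically, $\mathcal{L}_X\alpha^u=-r^u\alpha^u$ and $\mathcal{L}_X\alpha^s=-r^s\alpha^s$, where $r^u>0$ and $r^s<0$ are the instantaneous expansion and contraction rates recorded by the norm. Hence $\alpha_+=\alpha^u-\alpha^s$ and, inductively, $\mathcal{L}_X^j\alpha_+=c_j\,\alpha^u-d_j\,\alpha^s$ with $c_{j+1}=X(c_j)-r^uc_j$ and $d_{j+1}=X(d_j)-r^sd_j$. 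The key structural fact is that each $c_j$ (resp.\ $d_j$) is a universal polynomial in $r^u$ (resp.\ $r^s$) and its iterated flow-derivatives $X(r^u),\dots,X^{j-1}(r^u)$. In the idealized synchronized model where $r^u\equiv\lambda^u$ and $r^s\equiv\lambda^s$ are constant, all flow-derivatives vanish and $c_j=(-\lambda^u)^j$, $d_j=(-\lambda^s)^j$, so $\mathcal{L}_X^j\alpha_+$ becomes a constant-coefficient combination of $\alpha^u,\alpha^s$ whose contact type one checks to be positive for $j$ even and negative for $j$ odd --- exactly the alternating-sign strong adaptation demanded along the whole filtration. For a general flow synchronization is impossible, but Theorem~\ref{expunif} allows me to deform the norm so that $r^u,r^s$ are uniformly as close to $\lambda^u,\lambda^s$ as I wish and, crucially, so that all flow-derivatives $X^{(i)}(r^u),X^{(i)}(r^s)$ with $1\le i\le k$ are uniformly arbitrarily small. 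Feeding this into the recursion keeps $c_j,d_j$ (for $0\le j\le k$) uniformly $C^0$-close to their synchronized values, with small flow-derivatives $X(c_j),X(d_j)$.

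The hard part, and the step I expect to be the main obstacle, is that deforming the norm disturbs the \emph{transverse} derivatives of $r^u,r^s$ in an uncontrolled way, while Theorem~\ref{expunif} only tames derivatives \emph{along} the flow; I must therefore argue that the contact condition for $\mathcal{L}_X^j\alpha_+$ sees only the flow-derivatives. This is where the wedge structure saves the argument: expanding $\mathcal{L}_X^j\alpha_+\wedge d(\mathcal{L}_X^j\alpha_+)$, every term carrying $dc_j$ or $dd_j$ is wedged against both $\alpha^u$ and $\alpha^s$, so only the flow-component $X(c_j)$ (resp.\ $X(d_j)$) of those differentials survives in the top form and the transverse derivatives drop out identically. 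Consequently $\mathcal{L}_X^j\alpha_+\wedge d(\mathcal{L}_X^j\alpha_+)$ is a definite-sign combination of the fixed background $3$-forms $\alpha^u\wedge d\alpha^u$, $\alpha^u\wedge d\alpha^s$, $\alpha^s\wedge d\alpha^s$ with coefficients $c_j^2,\,c_jd_j,\,d_j^2$, plus an error of size $O\big(\sup_{0\le j\le k}(|X(c_j)|+|X(d_j)|)\big)$.

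By openness of the contact condition, once these flow-derivatives are made small enough the top form retains the sign of the synchronized model simultaneously for all $0\le j\le k$, so the resulting $\alpha_+$ lies in the $k$-th subspace. The remaining technical care concerns regularity: since $E^s,E^u$ are only H\"older, the clean coframe relations above must be realized through the smooth supporting forms furnished in the proof of Theorem~\ref{A}, and the polynomial recursion and its smallness estimates must be carried out for these smooth approximants rather than the exact invariant forms; the strong convergence statement of Theorem~\ref{expunif} is precisely what makes the finitely many required estimates (for fixed $k$) simultaneously achievable. As $k$ is arbitrary, every subspace in the filtration is non-empty, which is the assertion of Corollary~\ref{refcor}.
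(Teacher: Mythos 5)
Your overall architecture is the same as the paper's: reduce strong adaptation of each $\mathcal{L}_X^j\alpha_+$ to a sign condition involving only derivatives along the flow, via the recursion $\mathcal{L}_X^j\alpha_+=c_j\alpha_u-d_j\alpha_s$, then invoke the asymptotic synchronization Theorem~\ref{expunif}, openness of the contact condition, and a final smoothing. Your key structural point---that transverse derivatives of $c_j,d_j$ never enter the contact condition---is correct, and is precisely the paper's identity $\iota_X(\beta\wedge d\beta)=-\beta\wedge\mathcal{L}_X\beta$ for $\beta(X)=0$: the contact sign of $c\alpha_u-d\alpha_s$ is the sign of $dc'-cd'$, where $c'=X\cdot c+r_uc$ and $d'=X\cdot d+r_sd$.

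There is, however, a genuine error in how you invoke Theorem~\ref{expunif}, at the step you yourself flag as crucial. The theorem does \emph{not} make $r_u,r_s$ uniformly close to constants $\lambda^u,\lambda^s$; it makes $r_{u,T}$ exponentially close to the finite-time Birkhoff average $\frac{1}{T}\int_0^T r_u(\tau)\,d\tau$---a \emph{function}---while making all flow derivatives up to a prescribed order uniformly small. Uniform closeness to a constant is impossible in general: by Remark~\ref{perexp}, along each periodic orbit the rate of any norm averages to that orbit's Lyapunov exponent, and these exponents vary from orbit to orbit for a typical Anosov flow (the paper stresses that genuine synchronization only occurs in the incompressible case). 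Consequently your intermediate estimate ``$c_j,d_j$ uniformly $C^0$-close to their synchronized values $(-\lambda^u)^j,(-\lambda^s)^j$'' is false as stated, and your error decomposition collapses. The repair---which is what the paper actually does---is to compare with the \emph{pointwise} model rather than a constant one: since $c_j=r_u^j+P_j$ and $d_j=r_s^j+Q_j$, where every monomial of $P_j,Q_j$ contains at least one flow derivative $X^{(i)}r_u$ or $X^{(i)}r_s$, one gets
$$d_jc_{j+1}-c_jd_{j+1}=(r_ur_s)^j(r_u-r_s)+E_j,$$
where every term of $E_j$ carries a flow derivative. The main term has sign $(-1)^j$ with magnitude at least $\big(\min r_u\cdot\min(-r_s)\big)^j\big(\min r_u+\min(-r_s)\big)>0$ by adaptedness (and these bounds persist under the deformation, since the Birkhoff averages stay in $[\min r_u,\max r_u]$), while $E_j$ is uniformly small for large $T$ by Theorem~\ref{expunif}; this yields all finitely many sign conditions simultaneously, and no closeness to constants is ever needed. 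Separately, note a sign slip: with $r_u$ the expansion rate one has $\mathcal{L}_X\alpha_u=+r_u\alpha_u$ (Section~\ref{s2}), not $-r^u\alpha^u$; under your stated convention the contact signs of $\mathcal{L}_X^j\alpha_+$ would come out with the opposite parity, so your asserted parity (positive for $j$ even, negative for $j$ odd), which is the correct one, is inconsistent with your own normalization.
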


The non-emptiness claim in Corollary~\ref{refcor} in particular means that depending on the application, the contact geometric characterization of Anosov flows, as in Theorem~\ref{A}, can be based on {\em any layer} of the above hierarchy. In fact, even further refinements of the above is possible by introducing a weaker notion of {\em adaptation}. For instance in this paper, we extend our study to the case when $\alpha_+$ and $\mathcal{L}_X\alpha_+$ are strongly adapted and {\em adapted}, respectively. Subsequently, we show that the fibration of such geometries is fiberwise convex, which can be useful in computations. We avoid overpopulation of this introduction and refer the reader to the final remarks of Section~\ref{s3} and Section~\ref{s5} (in particular, Theorem~\ref{mains}, Remark~\ref{strongfilter} and Corollary~\ref{sfiber}) for further discussion.

%\textcolor{red}{consequence: convex very adapted thingg. asymptotic filtration comment.}
\vskip0.5cm

\textbf{Organization:} In Section~\ref{s2}, we provide the necessary background in the contact geometric theory of Anosov 3-flows. Section~\ref{s3} is devoted to investigating the Reeb dynamics of bi-contact structures, which results in the introduction of our contact geometric model and the proof of our main theorem (Theorem~\ref{A}), modulo a technical difficulty left to be dealt with in the next section. That is, contextualizing and proving the asymptotic synchronization theorem (Theorem~\ref{expunif}) in Section~\ref{s4}. In Section~\ref{s5}, we study the space of strong adaptations and we summarize our observations in a fibration theorem (Theorem~\ref{B}).

\vskip0.5cm
\textbf{ACKNOWLEDGEMENT:} We are grateful to Federico Salmoiraghi, conversations with whom in 2023 initiated many of the ideas presented here, for many fruitful exchanges. We also thank Salmoiraghi for the very generous offer of help with most of the figures in this paper. Moreover, we greatly appreciate Peter Albers, Jonathan Bowden and Agustin Moreno, the organizers of the {\em Symplectic geometry and Anosov dynamics workshop}, held in July 2024 in Heidelberg, Germany, for providing a great opportunity to communicate the ideas in this paper, and an excuse to publish them. We further thank Moreno for commenting on an earlier version of this paper.

%%%%%%%%%%%%
\section{Bi-contact geometry and projective Anosovity}\label{s2}

Recall that on a (closed oriented) 3 dimensional manifold $M$, an {\em Anosov flow} $X^t$ is a flow admitting a continuous $X^t$-invariant splitting $TM\simeq \langle X \rangle \oplus E^s \oplus E^u$ such that for any $t\in\mathbb{R}$
$$\begin{cases}
||X^t_*e_s||\leq A e^{-Ct}||e_s||\text{ for any } e_s\in E^s \\
||X^t_*e_u||\geq A e^{Ct}||e_u||\text{ for any } e_u\in E^u
\end{cases},$$
for some constants $A,C>0$ and some norm $||.||$ on $TM$. This means that the norms on the line bundles $E^u$ and $E^s$, namely the {\em strong unstable and stable bundles}, have {\em eventual exponential expansion and contraction}, respectively. However, using an averaging technique of Holmes \cite{holmes} and Hirsch-Pugh-Shub \cite{invariant}, one can find another norm on $TM$ with respect to which {\em immediate exponential expansion and contraction} can be observed. That is equivalent to having $A=1$ in the above {\em Anosov condition} with respect to such norm. Norms with this property are called {\em adapted} and constitute a convex (see Lemma~\ref{adnormconvex}) set of norms on the invariant bundles.

Classical examples of Anosov 3-flows are the suspension of an (orientation preserving) {\em Anosov automorphism} of $\mathbb{T}^2$ with a constant roof function, or the geodesic flow induced on the unit tangent bundle $UT\Sigma$ associated with a hyperbolic surface or orbifold $\Sigma$. Anosov flows finitely covered, in a differentiable way, by these examples are called {\em algebraic Anosov flows} and play an important role in the theory. However, we now know many classes of Anosov flows beyond these constructions, thanks to the introduction of various surgery techniques since the late 1970s.

It is well established since the early 1980s that Anosov flows in dimension 3 have deep relation to geometry and topology, mostly thanks to the applications of {\em taut foliations} in low dimensional topology. That is, through the study of codimension 1 foliations $\mathcal{F}^u$ and $\mathcal{F}^s$, namely the {\em weak unstable and stable foliations}, which are tangent to the integrable invariant plane fields $E^{wu}:=E^u \oplus \langle X\rangle$ and $E^{ws}:=E^s \oplus \langle X\rangle$, namely the {\em weak unstable and stable bundles}, respectively. See \cite{bartintro} for a survey on some of these developments. It is important to know that while the splitting of $TM$ in the definition of Anosov flows is only Hölder continuous in general, subtle {\em regularity theory} of Anosov flows imply that the weak invariant bundles $E^{wu}$ and $E^{ws}$ are in fact $C^1$ \cite{invariant} and it is known that such regularity cannot be improved to $C^2$, except in the case of algebraic Anosov flows \cite{ghys}. Therefore, the foliation theoretic theory of Anosov 3-flows is often very topological in nature as the involved foliations are typically of low regularity.

It is convenient for our goals (contact geometric framework) in this paper to think of these foliations in terms of differential forms. More specifically, assuming the orientability of the plane field $E^{ws}$, we can describe it as the kernel of a non-vanishing $C^1$ 1-form $\alpha_u$ (with $\alpha_u\wedge d\alpha_u=0$ by the Frobenius theorem). Furthermore, such 1-form induces a norm on $E^u$ by $||e||=|\alpha_u(e)|$ for $e\in E^u$. Therefore, fixing the flow $X^t$, there is a 1-to-1 correspondence 
$$\bigg\{ \text{Non-vanishing }\alpha_u|\ker{\alpha_u}=E^{ws}\bigg\}/ \{ \pm 1\}
\mbox{\Large$\overset{\text{1-to-1}}{\longleftrightarrow}$}
\bigg\{ \text{Norms on } E^{u}\bigg\}.$$

Furthermore, considering the fact that $X$ preserves $\ker{\alpha_u}=E^{ws}$, we have 
$$\mathcal{L}_X\alpha_u=r_u\alpha_u,$$ 
for some function $r_u:M\rightarrow \mathbb{R}$, which measures the expansion of the norm $||.||$ corresponding to such $\alpha_u$, i.e. 
$$r_u \big|_p:=\partial_t \cdot \ln{||X_*^t (e_u)||}\bigg|_{t=0} \text{  for any }p\in M, e_u\in E_p^u.$$
 This is called the {\em expansion rate} of $E^u$. Furthermore, we observe that $r_u>0$, if and only if, the norm $||.||$ corresponding to $\alpha_u$ is adapted. Similarly, the expansion rate of $E^s$ with respect to some norm, denoted by $r_s$, can be defined via $\mathcal{L}_X\alpha_s=r_s\alpha_s$, where $\alpha_s$ is a non-vanishing 1-form with $\ker{\alpha_s}=E^{wu}$, as we have similar correspondence between norms on $E^s$ and such 1-forms. Again, corresponding to an adapted norm, we have $r_s<0$. Since adapted norms play an important role in this paper, we record this.

 \begin{definitionp}\label{normsad}
Give an Anosov flow $X^t$, we call a norm on $E^s\oplus E^u$ {\em adapted}, if it satisfies the rate condition in the definition with $A=1$. This is equivalent to the expansion rates $r_s,r_u$ induced from such norm satisfying $r_u>0>r_s$, assuming its differentiability along the flow.
 \end{definitionp}

 Strictly speaking $r_s<0<r_u$ is equivalent to the existence of a {\em hyperbolic splitting} $TM/\langle X\rangle\simeq E^s \oplus E^u$ (with exponentially expanding or contracting actions on $E^u$ and $E^s$, respectively) and then, the Doering's lemma implies that such splitting can be lifted to a splitting $TM\simeq \langle X \rangle \oplus E^s \oplus E^u$.

 These expansion rates can be described in various ways (see Section~2 of \cite{hoz6}) and in the following proposition, we quote some properties of these functions we will exploit in this paper.

\begin{proposition}\label{exppro}(\cite{hoz3,hoz6})
Let $r_u$ be the expansion rates corresponding to $\alpha_u$ and the flow generated by $X$. The following statements (and similar statements for the stable bundle $E^s$) hold:

(1) The expansion rate of the same norm with respect to $fX$ is $fr_u$.

(2) Generically, $\alpha_u$ and $r_u$ can be assumed as regular as $\ker{\alpha_u}=E^{ws}$ and in particular $C^1$.

(3) There is a correspondence between the norms on $E^{wu}/\langle X\rangle$ (which is independent of the parametrization) and the norm on the strong bundle $E^u$ of a given parametrization.

(4) The expansion rate corresponding to $\bar{\alpha}_u:=e^h\alpha_u$ can be computed as $\bar{r}_u=r_u+X\cdot h$. This implies that if both $r_u,\bar{r}_u>0$, the norms can be conjugated by a flow diffeomorphism.

(5) We have $||X^t_* (e_u)||/||e_u||=\exp{\int_0^t r_u(\tau)d\tau}$, where $r_u(\tau)=r_u \circ X^\tau$.
\end{proposition}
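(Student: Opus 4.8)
The plan is to treat (1), (4) and (5) as short computations resting on the single structural relation $\mathcal{L}_X\alpha_u=r_u\,\alpha_u$ together with $\iota_X\alpha_u=\alpha_u(X)=0$, to read off (3) from a canonical bundle isomorphism, and to isolate (2) as the one genuinely analytic point where the regularity theory recalled above enters. The recurring reformulation I would record at the outset is $\mathcal{L}_X\alpha_u=\iota_X d\alpha_u=r_u\alpha_u$, valid because $\iota_X\alpha_u=0$ kills the exact term in Cartan's formula. For (1), the same observation applied to $fX$ gives $\mathcal{L}_{fX}\alpha_u=\iota_{fX}d\alpha_u=f\,\iota_X d\alpha_u=f r_u\alpha_u$, so the rate attached to the unchanged form $\alpha_u$, hence to the unchanged norm, becomes $fr_u$; here one uses that reparametrization leaves $E^{ws}=\ker\alpha_u$ invariant and that $\alpha_u(fX)=0$.

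For (4), I would compute $\mathcal{L}_X(e^h\alpha_u)=e^h(X\cdot h)\alpha_u+e^h r_u\alpha_u=(r_u+X\cdot h)\,e^h\alpha_u$, so $\bar r_u=r_u+X\cdot h$. The conjugacy assertion then follows formally from this: the difference $\bar r_u-r_u=X\cdot h$ is a coboundary for the flow, since $(X\cdot h)\circ X^\tau=\tfrac{d}{d\tau}\bigl(h\circ X^\tau\bigr)$, so integrating (using (5)) the two multiplicative cocycles satisfy $\exp\!\int_0^t\bar r_u(\tau)\,d\tau=\tfrac{e^{\,h\circ X^t}}{e^{\,h}}\,\exp\!\int_0^t r_u(\tau)\,d\tau$. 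The fibrewise rescaling $e\mapsto e^{\,h}e$ on $E^u$, which is a flow-equivariant bundle automorphism, then satisfies $g_{X^t(p)}\,c_t(p)\,g_p^{-1}=\bar c_t(p)$ and so intertwines the two flow actions; this is the desired conjugation of the norms.

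For (5) I would set $\phi(t)=\ln\|X^t_*e_u\|$ and differentiate using the cocycle property $X^{t+s}_*=X^s_*\circ X^t_*$: since $X^t_*e_u\in E^u_{X^t(p)}$, the infinitesimal definition of the expansion rate at the point $X^t(p)$ gives $\phi'(t)=r_u(X^t(p))$, and integrating from $0$ to $t$ yields $\|X^t_*e_u\|/\|e_u\|=\exp\!\int_0^t r_u(\tau)\,d\tau$. For (3), I would observe that the projection $E^{wu}\to E^{wu}/\langle X\rangle$ restricts to a line-bundle isomorphism $E^u\xrightarrow{\ \sim\ }E^{wu}/\langle X\rangle$, because $\langle X\rangle$ is exactly its kernel and $E^{wu}=E^u\oplus\langle X\rangle$; norms transport across this isomorphism. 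The content is that $E^{wu}$ and the line field $\langle X\rangle$ are unchanged under reparametrization while $E^u$ is not, so the isomorphism encodes the parametrization-independent datum (a norm on the quotient) as the parametrization-dependent norm on the strong bundle $E^u$.

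The main obstacle is (2), which is not a formal manipulation. Here I would invoke the regularity theory of Anosov flows recalled before the proposition: the weak bundle $E^{ws}$ is $C^1$, and outside the algebraic case no better by Ghys. Using orientability of $E^{ws}$ one picks a defining $1$-form $\alpha_u$ of the same regularity as the plane field, and the regularity of $r_u$ is then governed by the flow-invariance of $E^{ws}$ together with the quoted regularity results, the word \emph{generically} signalling that $C^1$ is the sharp outcome in general. This is the only step that relies on hard input rather than on the identity $\mathcal{L}_X\alpha_u=r_u\alpha_u$, and it is where I expect the real work, and any hypotheses one must be careful to state, to concentrate.
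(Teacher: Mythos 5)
A preliminary remark on the comparison itself: this paper does not prove Proposition~\ref{exppro} at all --- it is explicitly quoted from \cite{hoz3,hoz6} (``we quote some properties of these functions'') --- so your attempt can only be measured against the standard arguments, not against an internal proof. By that measure, parts (1), (3), (5) and the formula $\bar{r}_u=r_u+X\cdot h$ in (4) are correct, and your arguments (Cartan's formula killed by $\alpha_u(X)=0$, the isomorphism $E^u\cong E^{wu}/\langle X\rangle$ induced by the projection, and differentiation of $\ln ||X^t_*e_u||$ via the cocycle property) are the natural ones.

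The genuine gap is the conjugacy claim in (4). In this paper and its references, a \emph{flow diffeomorphism} is a map of the form $\phi(p)=X^{g(p)}(p)$ for a function $g:M\rightarrow\mathbb{R}$ --- exactly the maps $X^f$ used in Section~\ref{s5}. Your fibrewise rescaling $e\mapsto e^{h}e$ is a bundle automorphism covering the \emph{identity}; it is not a flow diffeomorphism, and it is not flow-equivariant unless $X\cdot h\equiv 0$ (your own identity $g_{X^t(p)}c_t(p)g_p^{-1}=\bar{c}_t(p)$ says precisely that it fails to commute with the flow action; note also that there is only one flow action $X^t_*$ on $E^u$ here --- what changes is the norm, not the action). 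What you actually established, that the two scalar cocycles are cohomologous with transfer function $e^h$, is just the integrated restatement of $\bar{r}_u-r_u=X\cdot h$ and holds for \emph{every} $h$; indeed $e\mapsto e^{-h}e$ is always an isometry between the two normed bundles. The tell-tale sign of the gap is that your argument never invokes the hypothesis $r_u,\bar{r}_u>0$, without which the stated conclusion fails. The intended argument uses positivity twice. Since $M$ is closed and $r_u\geq\min r_u>0$, the map $g\mapsto\int_0^{g}r_u(X^\tau p)\,d\tau$ is a strictly increasing bijection of $\mathbb{R}$, so there is a unique $g(p)$ (differentiable in $p$, by the implicit function theorem) with $\int_0^{g(p)}r_u(\tau)\,d\tau=h(p)$; then part (5) gives $||X^{g(p)}_*e_u||=e^{h(p)}||e_u||=||e_u||_{\bar{\alpha}_u}$, i.e.\ the bundle map covering $\phi(p)=X^{g(p)}(p)$ carries the norm of $\alpha_u$ to that of $\bar{\alpha}_u$. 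Positivity of $\bar{r}_u$ is then what makes $\phi$ invertible: differentiating the defining equation of $g$ along the flow yields $1+X\cdot g=\bar{r}_u/(r_u\circ\phi)>0$, so $\phi$ is a local diffeomorphism that is injective along each orbit (and it preserves orbits), hence a diffeomorphism.

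A smaller caveat concerns (2): choosing $\alpha_u$ as regular as $E^{ws}$ does not by itself make $r_u$ that regular, because $r_u$ is obtained by differentiating $\alpha_u$ along the flow, so a $C^1$ choice of $\alpha_u$ a priori yields only a $C^0$ rate. One needs a good choice, e.g.\ mollifying a given $C^1$ defining form in the flow direction, $\alpha_u:=\int\rho(t)\,X^{t*}\beta\,dt$, so that after integration by parts $\mathcal{L}_X\alpha_u$ is as regular as $\alpha_u$ itself. Since the paper defers this point entirely to \cite{hoz3,hoz6}, this is a minor omission; the real defect in your proposal is the treatment of (4).
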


   \begin{figure}[h]
\centering
\begin{overpic}[width=0.5\textwidth]{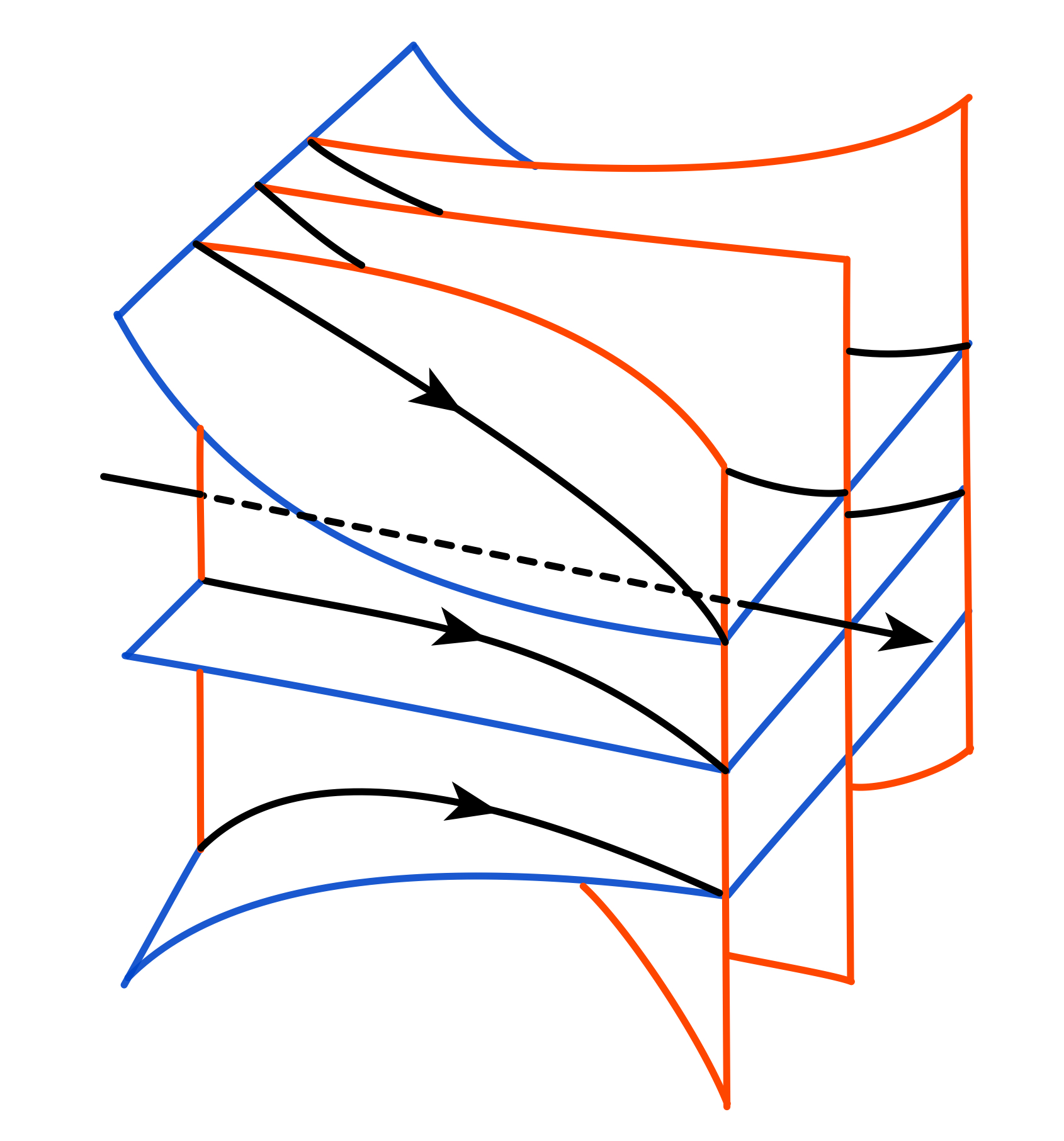}
       \put(170,15){$\mathcal{F}^u$}
              \put(08,35){$\mathcal{F}^s$}

          \put(08,145){$X$}
  \end{overpic}

\caption{Foliation theoretic local picture}
\end{figure}

These foliations and the existence of adapted norms yield a local picture for Anosov flows as shown in Figure~2. However, one should be careful interpreting this as a local model, as the (weak) invariant foliations are determined by the long term behavior of the flow and hence, the perturbation of the flow in a small neighborhood will result in global deformations of these foliations. Furthermore, the foliations in this picture have tangent fields with only $C^1$-regularity (with the exception of algebraic Anosov flows, where regularity is improved), and hence, Figure~2 is an honest picture at a low regularity level. However, the foliation theoretic viewpoint has been proven extremely successful in the development of topological theories for these structures.

\begin{convention}
We assume our Anosov flows to be {\em orientable}. That is, the invariant bundles are orientable. This is always the case, possibly after lifting to a double cover of $M$ (note that we are assuming $M$ to be oriented).
\end{convention}

In recent years, a new contact geometric approach has been under development, which hopes to reformulate and improve the classical theory based on foliations \cite{mit1,hoz3,sal1,clmm}. The goal of this paper is to take a step in this direction by providing a contact geometric local theory for these flows.

Recall that on an oriented 3-manifold $M$, a {\em contact form} $\alpha$ is a 1-form such that $\alpha \wedge d\alpha$ is non-vanishing. We call $\alpha$ a positive (negative) contact form, if compared to the orientation on $M$, $\alpha \wedge d\alpha$ is a positive (negative) volume form. The plane field $\xi=\ker{\alpha}$ is then called a (positive or negative) {\em contact structure}. Note that by the Frobenius theorem, a contact structure is a {\em maximally non-integrable} plane field (which is co orientable by our definition). This can be thought of as the extreme opposite of codimension 1 foliations thought as everywhere integrable plane fields. Moreover, a contact form defines a {\em co-orientation} on its kernel with respect to which $d\alpha|_{\ker{\alpha}}$ is a positive area form. One can easily define a positive (negative) contact form on $\mathbb{R}^3$ by $\alpha_{\pm,std}=dz\mp ydx$, named as the {\em standard contact forms} on $\mathbb{R}^3$. The classical {\em Darboux theorem} show that this example provides a local model for contact forms. That is, for any point $p\in M$ and positive contact form $\alpha_+$, there exists a diffeomorphism $\phi:U\rightarrow V$, where $U$ and $V$ are open neighborhoods of $p\in M$ and $O\in \mathbb{R}^3$, such that $\alpha_+=\phi^*\alpha_{+,std}$. The same statement holds for negative contact forms.

An important distinction between contact structures and foliations is that the contact condition $\alpha \wedge d\alpha$ is an open condition and hence, contact structures are {\em stable} structures (in fact, contact geometry's {\em Gray's theorem} implies that any deformation of contact structures is induced by an ambient isotopy). This is not the case for codimension 1 foliations, if interpreted as integrable plane fields. Furthermore, the openness of the contact condition often allows one to perturb the setting to have arbitrarily high regularity, which is again in contrast to foliations. This is particularly important for us, as our contact geometric theory of Anosov flows will enjoy the regularity as high as the flows, an important advantage compared to the foliation theoretic approach.

The study of the interactions between contact geometry and Anosov flows goes back to the following important observation made in the mid 1990s, independently by Mitsumatsu \cite{mit1} and Eliashberg-Thurston~\cite{et}.

\begin{lemma}
If $X^t$ is an Anosov flow on $M$. Then, there exists a transverse pair of negative and positive contact structures $\xi_-$ and $\xi_+$, such that $X\subset \xi_-\cap\xi_+$.
\end{lemma}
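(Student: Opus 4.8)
The plan is to build both contact structures by interpolating between the weak stable and weak unstable foliations, using the differential forms set up above. First I would fix an \emph{adapted} norm on $E^s\oplus E^u$, so that the expansion rates satisfy $r_u>0>r_s$ pointwise (Definition/Proposition~\ref{normsad}), and take the non-vanishing $1$-forms $\alpha_u,\alpha_s$ with $\ker\alpha_u=E^{ws}$ and $\ker\alpha_s=E^{wu}$, normalized against this norm so that $\mathcal{L}_X\alpha_u=r_u\alpha_u$ and $\mathcal{L}_X\alpha_s=r_s\alpha_s$; note both forms annihilate $X$. I would then propose the two candidates $\beta_\pm:=\alpha_u\pm\alpha_s$ and claim that $\ker\beta_+$ and $\ker\beta_-$ form the desired transverse pair of opposite signs.

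To verify the contact conditions, I would work in the coframe $\{\alpha_X,\alpha_s,\alpha_u\}$ dual to a frame $\{X,e_s,e_u\}$ adapted to the splitting, writing $\mathrm{vol}:=\alpha_X\wedge\alpha_s\wedge\alpha_u$ for the associated volume form. Since $\alpha_u(X)=0$, Cartan's formula gives $\iota_X d\alpha_u=\mathcal{L}_X\alpha_u=r_u\alpha_u$, and the integrability of $E^{ws}$ kills the remaining off-diagonal term, forcing $d\alpha_u=r_u\,\alpha_X\wedge\alpha_u+a\,\alpha_s\wedge\alpha_u$ for some function $a$; symmetrically $d\alpha_s=r_s\,\alpha_X\wedge\alpha_s+b\,\alpha_s\wedge\alpha_u$. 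A short wedge computation, in which the coefficients $a,b$ drop out entirely, then yields
\[
\beta_\pm\wedge d\beta_\pm=\pm(r_s-r_u)\,\mathrm{vol}.
\]
Because $r_s-r_u<0$ everywhere by adaptedness, $\beta_+$ is a negative and $\beta_-$ a positive contact form. The fact that $a,b$ cancel is exactly what makes the construction insensitive to the precise geometry of the foliations, depending only on the signs of the expansion rates.

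For transversality, note that $\ker\beta_+\cap\ker\beta_-$ is cut out by $\alpha_u=\alpha_s=0$, so it equals $E^{ws}\cap E^{wu}=\langle X\rangle$; the two planes are therefore distinct and meet precisely along $X$, so in particular $X\subset\ker\beta_+\cap\ker\beta_-$. Setting $\xi_-:=\ker\beta_+$ and $\xi_+:=\ker\beta_-$ produces the transverse negative/positive pair containing $X$.

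The one genuine subtlety, and what I expect to be the main obstacle, is regularity: the weak bundles $E^{ws},E^{wu}$ are in general only $C^1$, so $\alpha_u,\alpha_s$ are $C^1$, their exterior derivatives are merely continuous, and the plane fields above are a priori only $C^0$. The computed $3$-forms $\pm(r_s-r_u)\,\mathrm{vol}$ are nonetheless continuous and, by compactness of $M$, bounded away from zero with definite signs. Since the contact condition is open, I would finish by $C^1$-approximating the line field $\ker\beta_\pm/\langle X\rangle$ inside the circle bundle $\mathbb{P}(TM/\langle X\rangle)$ by a smooth one and lifting it back to a smooth plane field still containing $X$; any sufficiently close approximation remains contact of the same sign, producing honestly smooth $\xi_\pm$ with $X\subset\xi_-\cap\xi_+$. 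If only the low-regularity conclusion is wanted, this final smoothing step can be omitted.
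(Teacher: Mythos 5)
Your proposal is correct and is essentially the paper's argument made explicit: the kernels of $\beta_\pm=\alpha_u\pm\alpha_s$ are exactly the ``bi-sectors'' of the invariant bundles (with respect to your adapted norm) that the paper rotates via the flow action, and your wedge computation $\beta_\pm\wedge d\beta_\pm=\pm(r_s-r_u)\,\mathrm{vol}$ is the same calculation the paper carries out just below the lemma (in the form $\iota_X\alpha_+\wedge d\alpha_+=(r_u-r_s)\,\alpha_s\wedge\alpha_u$) when establishing the projectively Anosov correspondence. Your closing smoothing step likewise matches the paper's parenthetical ``(or their smoothenings),'' so the two proofs agree in both substance and structure.
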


This can be shown easily by considering the flow action on the bi-sectors of the invariant bundles (or their smoothenings). The action of the flow $X_*^t$ would push such plane fields towards the unstable bundle $E^{wu}$ and away from $E^{ws}$. Since $X$ is included in these plane fields, the rotation exhibited by this action is simply a manifestation of their non-integrability, as one can argue using the Frobenius theorem. Based on the direction of the rotation, one of these contact structures is positive and the other is negative. See Figure~3 (left).

\begin{definition}\label{suppdef}
The pair $(\xi_-,\xi_+)$ of transverse negative and positive contact structures is called a {\em bi-contact structure}. We say the bi-contact structure $(\xi_-,\xi_+)$ {\em supports} a non-vanishing vector field $X$, if $X\subset \xi_- \cap \xi_+$. A positive (or negative) contact structure supports $X$, if it appears as in a bi-contact structure supporting $X$. A positive (or negative) contact form supports $X$, if its kernel supports $X$.
\end{definition}

Theorem~1.5 of \cite{hoz3} shows the space of these supporting (bi-)contact structures is contractible.

   \begin{figure}[h]
\centering
\begin{overpic}[width=1\textwidth]{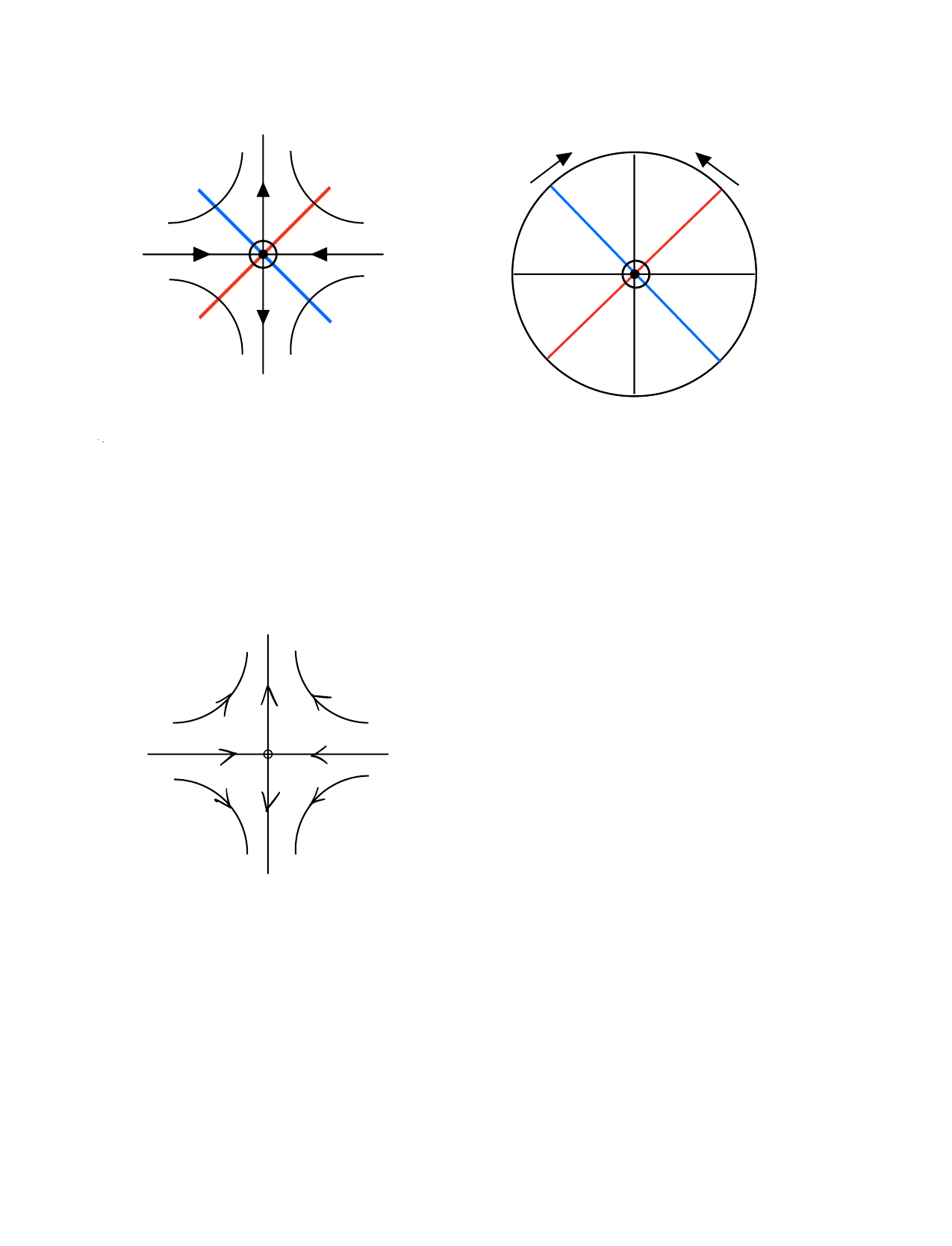}
        \put(164,150){$\xi_+$}
          \put(50,150){$\xi_-$}
        \put(192,102){$E^{ws}$}
        \put(107,188){$E^{wu}$}
        
                \put(436,88){$E$}
                  \put(350,176){$F$}
                   \put(410,162){$X_*$-action}
                   
                        \put(307,145){$\xi_-$}
          \put(392,145){$\xi_+$}
  \end{overpic}

\caption{Anosov flow action on $TM/\langle X\rangle$ (left) vs. projective Anosovity (right)}
\end{figure}

Even though the above observation is quite straightforward, it sits as the cornerstone for the contact geometric theory of Anosov 3-flows. First, we need to see that the contact geometric condition in the above lemma, i.e. $0\neq X\subset\xi_-\cap\xi_+$ for a transverse pair of negative and positive contact structures, has a dynamical interpretation for the non-vanishing vector field $X$.

Suppose $X$ is supported by a bi-contact structure $(\xi_-,\xi_+)$, i.e. $X\subset \xi_-\pitchfork \xi_+$ is non-vanishing. Studying the action of the flow $X_*^t$ on $\xi_-$ and $\xi_+$ and a standard contraction argument as in \cite{et} implies that there exist a unique transverse pair of plane fields $F=\lim_{t\rightarrow \infty}X_*^t\xi_-=\lim_{t\rightarrow \infty}X_*^t\xi_+$ and $E=\lim_{t\rightarrow -\infty}X_*^t\xi_-=\lim_{t\rightarrow -\infty}X_*^t\xi_+$, which are invariant under the flow $X^t$. This can be expressed as a continuous $X^t$-invariant splitting of the normal bundle $TM/\langle X\rangle \simeq E\oplus F$ and one can show that $E$ and $F$ are tangent to Hölder continuous {\em branching} foliations. See Figure~3 (right). However, the contact conditions for $\xi_-$ and $\xi_+$ imply more. Let $\alpha_+$ be {\em any} positive contact form with $\ker{\alpha_+}=\xi_+$. 

\begin{convention}\label{added}
Given the splitting $TM/\langle X\rangle \simeq E\oplus F$ as above, one can uniquely write $\alpha_+=\alpha_u-\alpha_s$, where $\alpha_u$ and $\alpha_s$ are non-vanishing 1-forms with $\ker{\alpha_u}=E$ and $\ker{\alpha_s}=F$, such that $\alpha_s \wedge \alpha_u$ is a positive area form on $TM/\langle X \rangle$. We keep this convention throughout this paper.
\end{convention}

 As in the Anosov case, $\alpha_u$ and $\alpha_s$ induce norms on $F$ and $E$, respectively. Therefore, we can define the expansion rates with respect to such induced norms $r_u$ and $r_s$. We call these norms or expansion rates {\em induced} from a supporting contact form (see more discussion on norms induced from supporting contact forms in \cite{hoz4}). One can then compute
$$\iota_X\alpha_+\wedge d\alpha_+=-\alpha_+\wedge \mathcal{L}_X\alpha_+=-(\alpha_u-\alpha_s)\wedge(r_u\alpha_u-r_s\alpha_s)=(r_u-r_s)\alpha_s\wedge \alpha_u.$$
Therefore, positive contacness of $\alpha_+$ is in fact equivalent to $r_u>r_s$. This means that with respect to the norms induced by $\alpha_+$, the (instantaneous) expansion of $F$ is always bigger than (instantaneous) expansion of $E$. This can be though of as a weak notion of hyperbolicity for flows, called {\em projective Anosovity}, where the flow action induces {\em relative expansion} of $F$ vs. $E$. More precisely, a non-vanishing vector field $X$ (or the flow $X^t$ generated by it) is called {\em projectively Anosov}, if it admits a continuous $X^t$-invariant splitting $TM/\langle X\rangle \simeq E\oplus F$, such that 
$$\frac{||X^t_*e_s||}{||X^t_* e_u||}\leq A e^{-Ct}\frac{||e_s||}{||e_u||}\text{ for any } e_s\in E \text{ and } e_u\in F,$$
for some constants $A,C>0$ and some norm $||.||$ on $TM/\langle X\rangle$. Such splitting is also called a {\em dominated splitting}, we say $F$ {\em dominates} $E$, or write $X_*|_E\prec X_*|_F$. 
One can again ask whether one can use a norm $||.||$ on $TM/\langle X\rangle$ with respect to which the relative expansion starts immediately. Gourmelon \cite{gour} provides an affirmative answer to this by further exploiting the averaging ideas of Holmes \cite{holmes} and Hirsch-Pugh-Shub \cite{invariant}.
We call a norm on $E\oplus F$ with this property (satisfying the rate condition of projective Anosovity with $A=1$) {\em projectively adapted}.
If we denote the induced expansion rates of $F$ and $E$ by $r_u$ and $r_s$, respectively, this is equivalent to $r_u>r_s$. Note that if for a projectively Anosov flow $X^t$, we have a norm on $E\oplus F$ with $r_u>0>r_s$, that would imply the hyperbolicity of the splitting $E\oplus F$, which as mentioned before can be lifted to an Anosov splitting, implying the Anosovity of $X^t$ with $E^{ws}=E$ and $E^{wu}=F$ (and hence our abuse of notation, when writing $\alpha_+=\alpha_u-\alpha_s$, and similarly, for the expansion rates $r_s,r_u$, etc. in the general projectively Anosov case. The statements of Proposition~\ref{exppro} still holds in this case).

The argument above shows that a vector field $X$ is projectively Anosov, if and only if, it is supported by a bi-contact structure. More specifically, any choice of {\em supporting} contact form $\alpha_+$ (or $\alpha_-$) yields a projectively adapted norm on $E\oplus F$.

We summarize this in the following theorem.

\begin{theorem}(Mitsumatsu 95 \cite{mit1}, Eliashberg-Thurston 95 \cite{et})
Let $X$ be a non-vanishing vector field on $M$. TFAE:

(1) $X$ generates a projectively Anosov flow;

(2) $X$ is supported by a bi-contact structure $(\xi_-,\xi_+)$.
\end{theorem}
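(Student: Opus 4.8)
The plan is to prove the two implications separately, leveraging the identity
$$\iota_X(\alpha\wedge d\alpha)=-\alpha\wedge\mathcal{L}_X\alpha$$
already isolated in the discussion above, which reduces the pointwise contact condition for a plane field containing $X$ to a statement about the infinitesimal rotation of that plane field under the flow. The implication $(2)\Rightarrow(1)$ is the softer direction given the machinery now in place, whereas $(1)\Rightarrow(2)$ requires genuinely building the two contact structures out of the dominated splitting.

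For $(2)\Rightarrow(1)$, suppose $X\subset\xi_-\pitchfork\xi_+$. First I would produce the invariant splitting. In $TM/\langle X\rangle$ the projections of $\xi_\pm$ are two transverse line fields cutting the projectivized bundle into two complementary cones; the opposite signs of the contact conditions force the forward flow $X^t_*$ to push every direction strictly into one cone and the backward flow into the other. A standard contraction (graph-transform) argument, exactly as in \cite{et}, then yields the continuous, transverse, $X^t$-invariant limits $F=\lim_{t\to\infty}X^t_*\xi_\pm$ and $E=\lim_{t\to-\infty}X^t_*\xi_\pm$. Fixing any positive contact form $\alpha_+$ with $\ker\alpha_+=\xi_+$ and writing $\alpha_+=\alpha_u-\alpha_s$ as in Convention~\ref{added}, the displayed Lie-derivative computation gives $\iota_X\alpha_+\wedge d\alpha_+=(r_u-r_s)\,\alpha_s\wedge\alpha_u$, so positivity of $\alpha_+$ is equivalent to $r_u>r_s$ pointwise. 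Since $r_u-r_s$ is continuous and positive on the compact manifold $M$, it is bounded below by some $C>0$; integrating the relative expansion via Proposition~\ref{exppro}(5) then yields
$$\frac{\|X^t_* e_s\|}{\|X^t_* e_u\|}=\exp\Big(\int_0^t (r_s-r_u)\,d\tau\Big)\frac{\|e_s\|}{\|e_u\|}\le e^{-Ct}\frac{\|e_s\|}{\|e_u\|},$$
which is the domination estimate (with $A=1$), establishing projective Anosovity.

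For $(1)\Rightarrow(2)$, given the continuous invariant dominated splitting $TM/\langle X\rangle\simeq E\oplus F$ with $r_u>r_s$ for a projectively adapted norm, I would construct $\xi_\pm$ as the two bisector plane fields of the splitting. Concretely, in the projectivization of $TM/\langle X\rangle$ the directions $E$ and $F$ are the repelling and attracting fixed directions of the projectivized flow, separating the circle into two open arcs. Choosing a direction in each arc, bounded away from $E$ and $F$, and taking its preimage together with $X$ produces two plane fields $\xi_-,\xi_+$ containing $X$ and transverse to one another. Because each chosen direction sits strictly inside a cone, the flow rotates it monotonically toward $F$ with a definite, sign-coherent angular velocity; via $\iota_X\alpha\wedge d\alpha=-\alpha\wedge\mathcal{L}_X\alpha$ this monotone rotation is exactly the non-vanishing of $\alpha\wedge d\alpha$, and the two arcs produce rotations of opposite sense, hence one positive and one negative contact structure. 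This is the rotation/Frobenius picture indicated before the statement.

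The main obstacle is regularity, in the direction $(1)\Rightarrow(2)$: the invariant bundles $E,F$ are only H\"older continuous, so the naive bisector forms $\alpha_u,\alpha_s$ are not differentiable and $d\alpha$ is not literally defined. I would resolve this by replacing $E,F$ with smooth plane fields that are $C^0$-close and still lie strictly inside their respective cones, and then argue that the pointwise contact condition persists. The subtlety is that the contact condition involves $d\alpha$, which is not $C^0$-robust; the correct control comes not from $C^0$-closeness but from the uniform strict cone contraction provided by domination over a fixed time $T$, which forces the flow to rotate the smoothed plane field by a definite amount over each time interval and hence makes $\mathcal{L}_X\alpha$ transverse to $\alpha$ with a fixed sign. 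Making this last step precise --- translating the uniform projective contraction of the dominated splitting into pointwise transversality of $\mathcal{L}_X\alpha$ and $\alpha$ for a smooth approximating plane field --- is the technical heart of the construction, and is exactly where the openness of the contact condition and the smoothing available in this setting are used.
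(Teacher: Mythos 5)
Your proposal is correct and follows essentially the same route as the paper's treatment: for $(2)\Rightarrow(1)$ the Eliashberg--Thurston contraction argument produces the invariant splitting and the computation $\iota_X\alpha_+\wedge d\alpha_+=(r_u-r_s)\,\alpha_s\wedge\alpha_u$ shows that any supporting contact form induces a projectively adapted norm (hence domination after integrating over the compact manifold), while for $(1)\Rightarrow(2)$ the paper likewise constructs the contact structures as smoothed bisectors of the invariant bundles and reads off positivity/negativity from the sign of the rotation forced by non-integrability. Your final paragraph merely makes explicit the smoothing and cone-contraction step that the paper compresses into the phrase ``bi-sectors of the invariant bundles (or their smoothenings).''
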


\begin{convention}
Since stable and unstable bundles of Anosov flows are defined independently of the manifold's orientations, as in the above computations, we need to adopt a convention in our computations. In this paper and in the Anosov case, we orient the invariant bundles by the choice of vector fields $e_s\in E^s,e_u\in E^u$, or equivalently, by the choice of non-vanishing foliation forms $\alpha_s,\alpha_u$,  such that $(X,e_s,e_u)$ is an oriented basis for $TM$, or equivalently, the volume form defined by $\iota_X \Omega =\alpha_s\wedge \alpha_u$ is positive. This orientation convention can be naturally extended to general projectively Anosov flows, by replacing $E^s$ and $E^u$ with $E$ and $F$, respectively (and lifting the vectors from $TM/\langle X\rangle$ to $TM$ if needed).
\end{convention}

    \begin{figure}[h]
\centering
\begin{overpic}[width=0.8\textwidth]{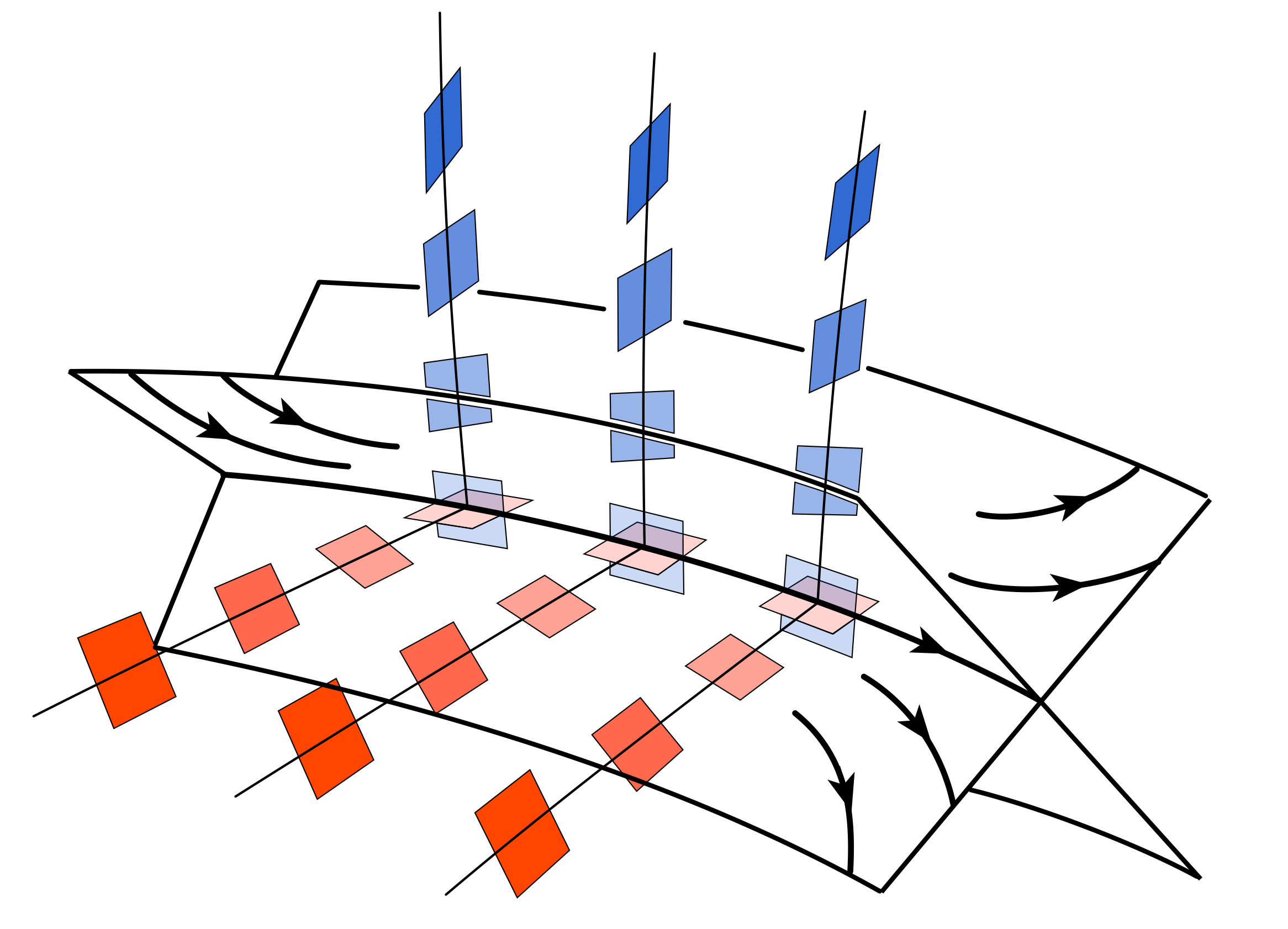}

       \put(158,245){$\xi_-$}
         \put(49,50){$\xi_+$}
          \put(25,175){$\mathcal{F}^s$}

                        \put(340,142){$\mathcal{F}^u$}
  \end{overpic}

\caption{Bi-contact geometric approach to Anosov 3-flows}
\end{figure}

The importance of this theorem is that it provides a purely contact geometric characterization of a dynamical condition, i.e. {\em projective Anosovity}, and therefore can be used as a bridge between different aspects of contact geometry and hyperbolic dynamics in a broad sense, and particularly, Anosov dynamics. One can naturally ask whether a similar characterization is possible for Anosov flows, i.e. to give a purely contact geometric characterization of Anosov flows. This has been carried out in \cite{hoz3} via connections to 4-dimensional Liouville (and symplectic) geometry, providing a 4-dimensional geometric theory of Anosov flows and recently resulting in the introduction of various new symplectic geometric invariants of Anosov flows by Cieliebak-Lazarev-Massoni-Moreno \cite{clmm}. The goal of this paper is provide an alternative 3 dimensional theory based on interactions with the Reeb dynamics associated with a supporting bi-contact structure, opening new avenues in exploring the interactions between Anosov dynamics and the vast world of contact geometry.

%%%%%%%%%%%%%%%
\section{Reeb dynamics of bi-contact structures and strong adaptations}\label{s3}

In this section, we study the Reeb flows associated with a bi-contact structure supporting a (projectively) Anosov 3-flow. This will eventually result in the introduction of our local model.

We start with an elementary observation first noted in \cite{hoz3}.

\begin{lemma}\label{reebquad}
Let $(\xi_-,\ker{\alpha_+})$ be a bi-contact structure supporting a projectively Anosov flow $X^t$. Then,
$$R_{\alpha_+}\subset \langle X,r_ue_s+r_se_u\rangle,$$
where $e_s,e_u$ and $r_s,r_u$ are the unit vectors and expansion rates, respectively, with respect to the norm induced from $\alpha_+$, and $(X,e_s,e_u)$ is an oriented basis on $M$.
\end{lemma}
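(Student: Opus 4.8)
The plan is to extract the claim from the single Reeb equation $d\alpha_+(R_{\alpha_+},X)=0$, which already suffices to pin down the $E\oplus F$-component of $R_{\alpha_+}$. I would write $R_{\alpha_+}=pX+q\,e_s+w\,e_u$ in the basis $(X,e_s,e_u)$; the goal is precisely to show that $(q,w)$ is proportional to $(r_u,r_s)$, since then the projection of $R_{\alpha_+}$ onto $\langle e_s,e_u\rangle$ lies along $r_u e_s+r_s e_u$, and hence $R_{\alpha_+}\in\langle X, r_u e_s+r_s e_u\rangle$.

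First I would record the incidence relations forced by the definitions. Since $E^{ws}=\ker\alpha_u$ and $E^{wu}=\ker\alpha_s$ both contain $X$, we have $\alpha_u(X)=\alpha_s(X)=0$; since $e_s$ spans $E\subset\ker\alpha_u$ and $e_u$ spans $F\subset\ker\alpha_s$, we have $\alpha_u(e_s)=\alpha_s(e_u)=0$; and because $e_s,e_u$ are unit for the norm induced by $\alpha_+$ (given by $\alpha_u$ on $F$ and $\alpha_s$ on $E$), we have $|\alpha_u(e_u)|=|\alpha_s(e_s)|=1$. Consequently $\alpha_u(R_{\alpha_+})=w\,\alpha_u(e_u)$ and $\alpha_s(R_{\alpha_+})=q\,\alpha_s(e_s)$.

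Next I would convert the Reeb equation into a statement about the expansion rates. As $R_{\alpha_+}$ lies in the kernel of $d\alpha_+$, we have $d\alpha_+(R_{\alpha_+},X)=0$. Because $\alpha_+(X)=0$ (as $X\subset\xi_+$), Cartan's formula gives $\iota_X d\alpha_+=\mathcal{L}_X\alpha_+=\mathcal{L}_X(\alpha_u-\alpha_s)=r_u\alpha_u-r_s\alpha_s$, using the defining relations $\mathcal{L}_X\alpha_u=r_u\alpha_u$ and $\mathcal{L}_X\alpha_s=r_s\alpha_s$. Therefore
$$0=d\alpha_+(R_{\alpha_+},X)=-(\iota_X d\alpha_+)(R_{\alpha_+})=-\big(r_u\,\alpha_u(R_{\alpha_+})-r_s\,\alpha_s(R_{\alpha_+})\big)=-\big(r_u w\,\alpha_u(e_u)-r_s q\,\alpha_s(e_s)\big).$$

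Finally, the sign bookkeeping is the only delicate point, and it is resolved exactly by the orientation convention. The requirement that $\alpha_s\wedge\alpha_u$ be a positive area form on $TM/\langle X\rangle$, together with $(X,e_s,e_u)$ being an oriented basis, forces $\alpha_s(e_s)\,\alpha_u(e_u)=(\alpha_s\wedge\alpha_u)(e_s,e_u)>0$, so $\alpha_u(e_u)$ and $\alpha_s(e_s)$ share the same nonzero sign. Dividing the last displayed identity by this common value yields $r_u w=r_s q$, i.e. $(q,w)\propto(r_u,r_s)$, which is the desired conclusion. I expect the main (mild) obstacle to be precisely this orientation/sign normalization; everything else is a direct application of Cartan's formula and the defining relations for the expansion rates.
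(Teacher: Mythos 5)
Your proposal is correct and follows essentially the same route as the paper's proof: both reduce the claim to the Reeb condition $\iota_{R_{\alpha_+}}d\alpha_+=0$, apply Cartan's formula with $\alpha_+(X)=0$ to get $\iota_X d\alpha_+=\mathcal{L}_X\alpha_+=r_u\alpha_u-r_s\alpha_s$, and read off that $R_{\alpha_+}$ lies in the kernel of this 1-form, which is $\langle X, r_ue_s+r_se_u\rangle$. Your treatment is merely more explicit about the coordinate expansion and the orientation/sign normalization, which the paper leaves implicit.
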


\begin{proof}
Write $\alpha_+=\alpha_u-\alpha_s$ and let $r_u ,r_s$ be the induced expansion rates. We have 
$$\iota_{R_{\alpha_+}}d\alpha_+=0 \Longrightarrow 0=d\alpha_+(X,R_{\alpha_+})=(\mathcal{L}_X\alpha_+)(R_{\alpha_+})=(r_u\alpha_u-r_s\alpha_s)(R_{\alpha_+})$$
$$\Longleftrightarrow R_{\alpha}\subset \langle X,r_ue_s+r_se_u\rangle,$$
where $e_u,e_s$ are the unit vectors with respect to the induced norm and $(e_s,e_u,X)$ is an oriented basis.
\end{proof}

An immediate consequence of the above observation is that the information about the Anosovity of the flow, i.e. having $r_s<0<r_u$ with respect to some adapted norm, can be described in terms of which quadrants $\langle R_{\alpha_+},X \rangle$ occupies in $TM/\langle X\rangle \simeq E\oplus F$. See Figure~5 (left). In \cite{hoz3}, we call $R_{\alpha_+}$ {\em dynamically negative} if it lies in the second or forth quadrant of Figure~5 (left). Our computations indicate that this is equivalent to $r_s<0<r_u$, i.e. the norms induced from $\alpha_+$ on $E^s$ and $E^u$ are adapted in the sense of Anosov flows. We naturally have the following definition.

\begin{definitionp}\label{cad}
Let $X^t$ be a projectively Anosov flow. We call a supporting contact form $\alpha_+$ {\em adapted}, if $R_{\alpha_+}$ lies in the second or forth quadrant of Figure~5 (left). That is equivalent to the norms induced from $\alpha_+$ on the invariant bundles being adapted in the sense of Definition/Proposition~\ref{normsad}.
\end{definitionp}

   \begin{figure}[h]
\centering
\begin{overpic}[width=1\textwidth]{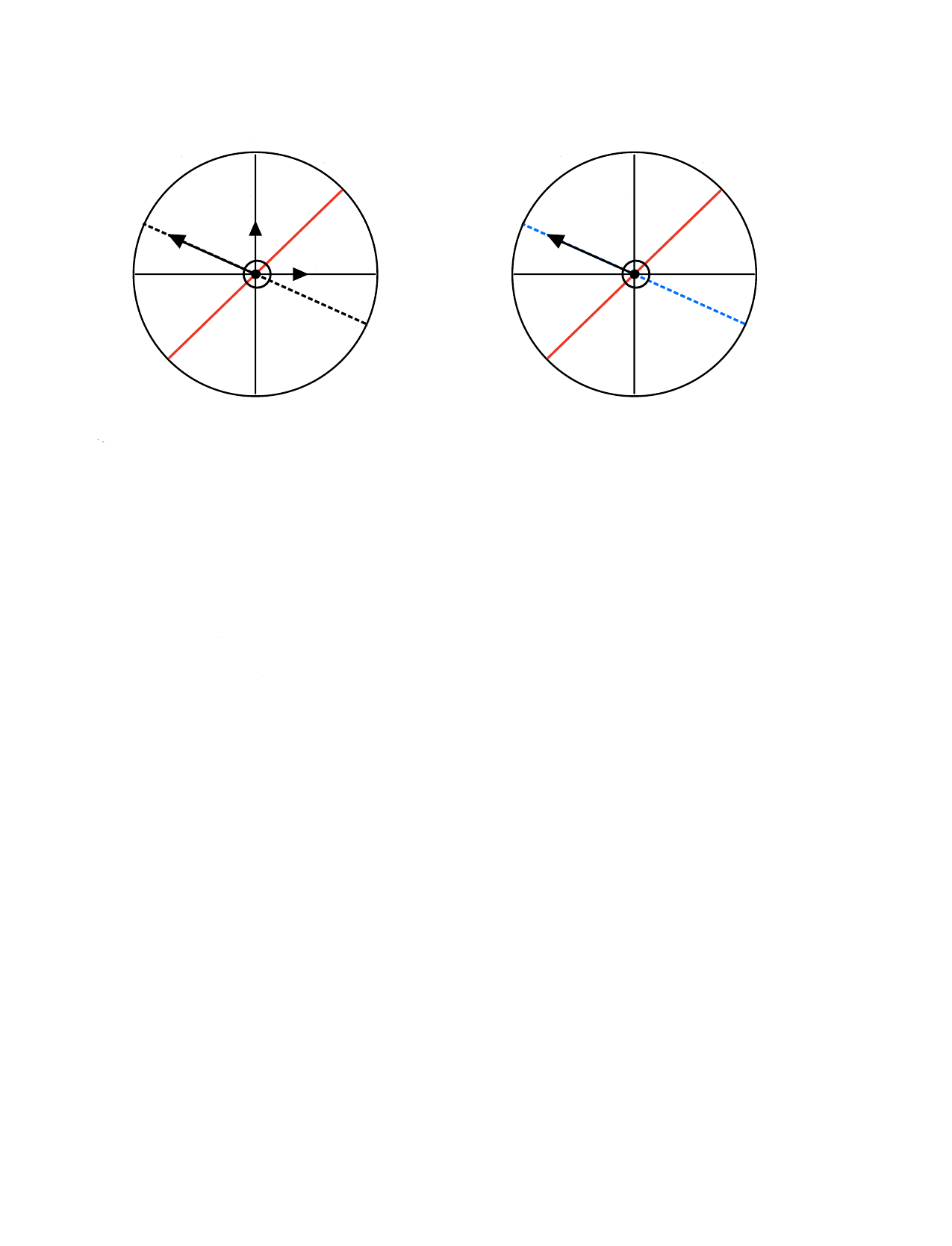}
        \put(172,165){$\ker{\alpha_+}$}
          \put(60,140){$R_{\alpha_+}$}
            \put(185,70){$\langle X, r_ue_s+r_se_u\rangle$}
        \put(192,108){$E^{ws}$}
        \put(110,192){$E^{wu}$}

        \put(408,165){$\ker{\alpha_+}$}
          \put(296,140){$R_{\alpha_+}$}
            \put(421,70){$\xi_-$}
        \put(428,108){$E^{ws}$}
        \put(346,192){$E^{wu}$}
  \end{overpic}

\caption{Projective geometry of Reeb flows in bi-contact geometry (left) Strong adaptation (right)}
\end{figure}

Our argument shows the following.

\begin{corollary}(H. 20 \cite{hoz3})
Let $X$ be a non-vanishing vector field on $M$. TFAE:

(1) $X$ generates an Anosov flow;

(2) $X$ is supported by an adapted contact form $\alpha_+$.
\end{corollary}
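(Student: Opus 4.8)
The plan is to prove the two implications separately, packaging the observations already assembled above: the Mitsumatsu--Eliashberg--Thurston theorem (projective Anosovity $\Leftrightarrow$ existence of a supporting bi-contact structure), Lemma~\ref{reebquad} together with Definition/Proposition~\ref{cad} (which translates adaptedness of $\alpha_+$ into the sign condition $r_s<0<r_u$ on the induced expansion rates), and the equivalence recorded after Definition/Proposition~\ref{normsad} between $r_s<0<r_u$ and genuine hyperbolicity of the normal splitting. In both directions the only real work is keeping track of which invariant bundle is the kernel of which foliation form and checking the contact signs.

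For $(1)\Rightarrow(2)$, I would start from an Anosov flow $X^t$, which is in particular projectively Anosov, so by Mitsumatsu--Eliashberg--Thurston it is supported by a bi-contact structure; moreover the invariant splitting $E\oplus F$ produced from that bi-contact structure is forced to agree with the weak invariant bundles $E^{ws},E^{wu}$ of $X^t$, since these are respectively the $t\to-\infty$ and $t\to+\infty$ limits of the flow action on the supporting plane fields. Now I would invoke the Hirsch--Pugh--Shub averaging (Definition/Proposition~\ref{normsad}) to fix an adapted norm, whose induced expansion rates satisfy $r_u>0>r_s$, and let $\alpha_u,\alpha_s$ be the corresponding foliation forms ($\ker\alpha_u=E^{ws}$, $\ker\alpha_s=E^{wu}$) under the norm-to-form correspondence. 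Setting $\alpha_+:=\alpha_u-\alpha_s$ as in Convention~\ref{added}, the computation $\iota_X(\alpha_+\wedge d\alpha_+)=-\alpha_+\wedge\mathcal{L}_X\alpha_+=(r_u-r_s)\,\alpha_s\wedge\alpha_u$ shows $\alpha_+$ is a positive contact form because $r_u-r_s>0$, while the analogous computation for $\alpha_-:=\alpha_u+\alpha_s$ yields a negative multiple of $\alpha_s\wedge\alpha_u$, so $\alpha_-$ is a negative contact form transverse to $\alpha_+$ and still containing $X$. Thus $(\ker\alpha_-,\ker\alpha_+)$ is a supporting bi-contact structure, $\alpha_+$ is a supporting contact form, and since its induced rates are $r_u>0>r_s$, Definition/Proposition~\ref{cad} certifies that $\alpha_+$ is adapted.

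For $(2)\Rightarrow(1)$, suppose $\alpha_+$ is an adapted supporting contact form. Being supported by a bi-contact structure, $X$ is projectively Anosov, with a continuous invariant splitting $TM/\langle X\rangle\simeq E\oplus F$; by Definition/Proposition~\ref{cad} adaptedness means the induced rates obey $r_u>0>r_s$. Here I would use compactness of $M$ to upgrade these pointwise inequalities to uniform bounds $r_u\geq c>0\geq -c\geq r_s$, whence Proposition~\ref{exppro}(5) gives $\|X^t_*e_u\|/\|e_u\|\geq e^{ct}$ on $F$ and $\|X^t_*e_s\|/\|e_s\|\leq e^{-ct}$ on $E$, i.e.\ immediate exponential expansion and contraction (so that $A=1$). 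This is precisely a hyperbolic splitting of the normal bundle, and the step I expect to carry the most weight is the appeal to Doering's lemma to lift it to an honest Anosov splitting $TM\simeq\langle X\rangle\oplus E^s\oplus E^u$, after which $X$ is Anosov by definition. The delicate points are therefore not computational but structural: ensuring in $(1)\Rightarrow(2)$ that the constructed $\alpha_+$ is genuinely \emph{supporting} rather than merely a positive contact form annihilating $X$, and in $(2)\Rightarrow(1)$ that the quotient-bundle hyperbolicity lifts, which is exactly the content isolated by Doering's lemma.
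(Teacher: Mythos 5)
Your proposal is correct and follows essentially the same route as the paper: the paper's (implicit) proof of this corollary is precisely the preceding discussion, namely the Mitsumatsu--Eliashberg--Thurston correspondence plus Definition/Proposition~\ref{cad} translating adaptedness into $r_s<0<r_u$, the Hirsch--Pugh--Shub averaging to produce such norms in the Anosov case, and the lifting of the resulting hyperbolic normal splitting via Doering's lemma. Your write-up merely makes explicit some steps the paper leaves tacit (the compactness/uniformity upgrade and the verification that $\alpha_-=\alpha_u+\alpha_s$ is negative contact), with the only glossed-over point---shared with the paper---being that the constructed $\alpha_+$ is a priori only $C^1$ and must be smoothed using the openness of the contact and quadrant conditions.
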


One can further show that the space of adapted contact forms is a convex subset of supporting contact forms, thanks to the convexity of the space of adapted norms (see Lemma~\ref{adnormconvex}).

As discussed in Section~6 of \cite{hoz3}, there are interesting consequences of the above, including the {\em hypertightness} of contact structures supporting Anosov flows, or a symplectic generalization of non-existence of Anosov flows on $\mathbb{S}^3$. However, this is not really a purely contact geometric condition, as referring to the invariant splitting $E\oplus F$ is required, in order to determine whether a contact form is adapted. The next observation attempts to bypass this disadvantage by considering an additional condition.

\begin{corollary}\label{strcor}
If $(\xi_-,\ker{\alpha_+})$ is a bi-contact structure supporting a projectively Anosov flow $X^t$ and $R_{\alpha_+}\subset \xi_-$, then $X^t$ is Anosov.
\end{corollary}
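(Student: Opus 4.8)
The plan is to show that the hypothesis forces $\alpha_+$ to be an \emph{adapted} contact form in the sense of Definition/Proposition~\ref{cad}; Anosovity of $X^t$ then follows at once, since an adapted contact form induces expansion rates $r_s<0<r_u$ (Definition/Proposition~\ref{normsad}), and such a hyperbolic splitting lifts to an Anosov splitting by Doering's lemma. So everything reduces to proving that $R_{\alpha_+}$ lies in the second or fourth quadrant of $TM/\langle X\rangle\simeq E\oplus F$.

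First I would record the projective position of the Reeb field. By Lemma~\ref{reebquad}, $R_{\alpha_+}$ descends to the direction $r_ue_s+r_se_u$ in $TM/\langle X\rangle$, and the hypothesis $R_{\alpha_+}\subset\xi_-$ says precisely that this direction spans the line $\ell_-:=\xi_-/\langle X\rangle$. It is worth recording the clean reformulation $\xi_-=\ker(\mathcal{L}_X\alpha_+)$: indeed $\mathcal{L}_X\alpha_+=\iota_Xd\alpha_+$ annihilates both $X$ and $R_{\alpha_+}$ (the latter because $\iota_{R_{\alpha_+}}d\alpha_+=0$), and it is non-vanishing since $\mathcal{L}_X\alpha_+=r_u\alpha_u-r_s\alpha_s$ with $r_u>r_s$. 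Thus $R_{\alpha_+}\subset\xi_-$ is equivalent to $\mathcal{L}_X\alpha_+$ being a negative contact form, which is exactly condition~(2) of Theorem~\ref{A}.

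The heart of the argument is to locate $\ell_-$ relative to the invariant bundles. Since $\alpha_+=\alpha_u-\alpha_s$, the line $\ell_+=\xi_+/\langle X\rangle$ is the diagonal $\langle e_s+e_u\rangle$, sitting in the first/third quadrant. For any supporting contact structure, the projectivized flow action on $(TM/\langle X\rangle\setminus 0)/\mathbb{R}^\ast\cong\mathbb{RP}^1$ carries its line monotonically from $E$ (as $t\to-\infty$) to $F$ (as $t\to+\infty$), traversing exactly one of the two arcs cut out by $E$ and $F$, and the handedness of this traversal is the sign of the contact structure. Hence the positive $\xi_+$ and the negative $\xi_-$ occupy opposite arcs; as $\ell_+$ lies in the first/third-quadrant arc, $\ell_-$ must lie in the complementary second/fourth-quadrant arc. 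Therefore $r_ue_s+r_se_u$ lies in the second or fourth quadrant, so $r_u$ and $r_s$ have opposite signs, and combined with $r_u>r_s$ this yields $r_s<0<r_u$, as desired.

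The step I expect to be the main obstacle is exactly this sector-location of $\xi_-$. One might hope to read $r_ur_s<0$ directly off the negativity of $\mathcal{L}_X\alpha_+$, but the pointwise contact condition computes to $-r_uXr_s+r_sXr_u+r_ur_s(r_u-r_s)<0$, whose derivative terms along $X$ prevent any pointwise conclusion about the sign of $r_ur_s$; so genuine dynamical input (the monotone interpolation between $E$ and $F$ with sign-determined handedness, i.e. the standard supporting bi-contact picture) is unavoidable. An alternative that bypasses the projective discussion is to divide the contact inequality by $r_ur_s$ where it is nonzero, obtaining $X\ln|r_u/r_s|<-(r_u-r_s)<0$, and to integrate over a periodic orbit to contradict $r_ur_s>0$ there; but this controls only periodic orbits, so the projective route yielding the pointwise statement is the cleaner one.
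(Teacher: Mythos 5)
Your proof is correct and takes essentially the same route as the paper: reduce the hypothesis to adaptedness of $\alpha_+$ by locating any supporting negative contact structure in the second/fourth quadrant of $E\oplus F$, then conclude Anosovity from $r_s<0<r_u$ via the adapted-contact-form characterization. The paper's own proof is exactly this one-sentence observation (citing Figure~5), with the monotone-rotation/handedness argument you spell out for the quadrant claim left implicit in its earlier discussion of the Mitsumatsu--Eliashberg--Thurston picture.
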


Notice that the condition $R_{\alpha_+}\subset \xi_-$ is (a priori strictly) stronger than $\alpha_+$ being an adapted contact form (as any supporting $\xi_-$ lies in the second/forth quadrant of Figure~5 (left)) and hence, the above conclusion is straightforward. %It is interesting to note that this condition is satisfied for both standard supporting contact forms associated with an algebraic Anosov flow. In fact, 
As we will soon see, it is not hard to prove the existence of such contact forms in the case of incompressible Anosov flows.

%\textcolor{red}{Salmoiraghi used this extensively.}

It is important to notice that the condition of Corollary~\ref{strcor} can be viewed as a condition on $\alpha_+$. More precisely, this is equivalent to $\langle X,R_{\alpha_+}\rangle$ being a negative contact structure (and then we can let $\xi_-:=\langle X,R_{\alpha_+}\rangle$ to have the desired bi-contact structure). In this sense, it is a $C^2$-open condition on $\alpha_+$. 

\begin{definition}
Let $X$ be any non-vanishing vector field. We call a supporting contact form $\alpha_+$ {\em strongly adapted}, if $\langle X,R_{\alpha_+}\rangle$ is a negative contact structure.
\end{definition}

Our argument above can be summarized by saying that a vector field supported by a strongly adapted contact form is Anosov. The main theorem of this paper (Theorem~\ref{main}) establishes an inverse for Corollary~\ref{strcor}, establishing a contact geometric characterization of Anosov 3-flows. Also, notice that this definition is well defined for Anosov vector fields up to (unoriented) reparametrizations.

\begin{remark}
In general, the supporting (bi-)contact geometry of (projective) Anosov flows is independent of the (unoriented) reparametrization classes of Anosov flows, as reversing the orientation of the flow does not affect the role of positive and negative contact structure, while it reverses the roles of the stable and unstable bundles. On the other hand, reversing the orientation of the ambient manifold reverses the roles of positive and negative contact structures, while leaving the roles of stable and unstable bundles unaffected.

In particular, the definition of (strongly) adapted contact forms is independent of (unoriented) parametrizations. On the other hand, a norm on $E^u$ or $E^s$ being (strongly) adapted depends on the specific parametrization of the flow (even in the same orientation class).
\end{remark}

To understand this condition better, we start with the following computations. Fix an Anosov flow $X^t$ and an adapted contact form $\alpha_+$. Using Lemma~\ref{reebquad} and the notations as before, we have

$$\alpha_+\text{ strongly adapted}\Longleftrightarrow \langle X,r_ue_s+r_se_u\rangle \text{ is a negative contact structure}$$
$$\Longleftrightarrow r_u\alpha_u-r_s\alpha_s=\mathcal{L}_X\alpha_+ \text{ is a negative contact form}$$
$$ \Longleftrightarrow \text{the norms induced from }\mathcal{L}_X\alpha_+ \text{ are projectively adapted}$$
$$[r_u+X\cdot \ln{r_u}]-[r_s + X\cdot \ln{(-r_s)}]>0.$$

Here, $X\cdot$ refers to the directional derivative of a function in the direction of $X$. Notice that the norm induced from $\mathcal{L}_X\alpha_+$ on $E^u$ corresponds to the non-vanishing 1-form $r_u\alpha_u$ and therefore, has an expansion rate of $[r_u+X\cdot \ln{r_u}]$. Similarly, the norm induced from $\mathcal{L}_X\alpha_+$ on $E^s$ corresponds to the non-vanishing 1-form $-r_s\alpha_s$ and therefore, has an expansion rate of $[r_s+X\cdot \ln{(-r_s)}]$.

In the case $X^t$ is an incompressible Anosov flow, one can define a supporting contact form $\alpha_+=\alpha_u-\alpha_s$ with its inducing expansion rates satisfying $r_u=-r_s$ and in particular, after a reparametrization of $X$, we can assume $r_u=-r_s=1$. We have $\mathcal{L}_X\alpha_+=\alpha_u+\alpha_s$ is a negative contact form (since it's induced norms on $E^s\oplus E^u$ is adapted). Note that the contact form $\alpha_+$ in the above construction is only $C^1$ (with $C^1$ derivative along the flow), unless $X^t$ is algebraic Anosov, in which case, it is $C^\infty$. However, we can always perturb such $\alpha_+$ to obtain a $C^\infty$ strongly adapted contact form, thanks to the openness of the contact condition.

It turns out that this idea can be improved to give a purely contact geometric characterization of incompressible Anosov flows. As discussed in \cite{hoz4}, the symmetry of an existing volume form is manifested in the strongly adapted contact geometry of Anosov flows.

\begin{theorem}(H. 21 \cite{hoz4})\label{vol}
Let $X$ be a non-vanishing vector field on $M$. TFAE:

(1) $X$ generates an incompressible Anosov flow;

(2) $X$ is supported by a bi-contact structure $(\ker{\alpha_-},\ker{\alpha_+})$ such that $\alpha_+(R_{\alpha_-})=\alpha_-(R_{\alpha_+})=0$.
\end{theorem}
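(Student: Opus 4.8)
The plan is to prove both implications by showing that the two Reeb conditions in (2) are jointly equivalent to the coupled pair of Lie-derivative relations $\mathcal{L}_X\alpha_+=\rho\,\alpha_-$ and $\mathcal{L}_X\alpha_-=\sigma\,\alpha_+$ for nonvanishing functions $\rho,\sigma$, from which an $X$-invariant volume form drops out almost immediately. The whole argument then reduces to elementary computations with the foliation forms of Convention~\ref{added}.

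For the implication (2)$\Rightarrow$(1), I would first observe that the existence of a supporting bi-contact structure makes $X$ projectively Anosov by the theorem of Mitsumatsu and Eliashberg--Thurston, and that the single condition $\alpha_-(R_{\alpha_+})=0$, i.e. $R_{\alpha_+}\subset\xi_-=\ker\alpha_-$, already forces $X^t$ to be Anosov via Corollary~\ref{strcor}. The extra symmetry is what yields incompressibility. Since $(\mathcal{L}_X\alpha_+)(R_{\alpha_+})=0$ and $(\mathcal{L}_X\alpha_+)(X)=0$ hold automatically, and $\mathcal{L}_X\alpha_+$ is nonvanishing (as $\alpha_+$ is strongly adapted, $\mathcal{L}_X\alpha_+$ is a negative contact form), the forms $\mathcal{L}_X\alpha_+$ and $\alpha_-$ both annihilate the plane $\xi_-=\langle X,R_{\alpha_+}\rangle$ and are therefore proportional: $\mathcal{L}_X\alpha_+=\rho\,\alpha_-$. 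Symmetrically, $\alpha_+(R_{\alpha_-})=0$ gives $\mathcal{L}_X\alpha_-=\sigma\,\alpha_+$. Consequently
$$\mathcal{L}_X(\alpha_+\wedge\alpha_-)=(\mathcal{L}_X\alpha_+)\wedge\alpha_-+\alpha_+\wedge(\mathcal{L}_X\alpha_-)=\rho\,\alpha_-\wedge\alpha_-+\sigma\,\alpha_+\wedge\alpha_+=0,$$
so $\alpha_+\wedge\alpha_-$ is $X$-invariant. Choosing any $\beta$ with $\beta(X)=1$, I would set $\Omega:=\beta\wedge\alpha_+\wedge\alpha_-$; this is a volume form, and since $\mathcal{L}_X\beta=\iota_Xd\beta$ annihilates $X$ it lies in $\mathrm{span}(\alpha_+,\alpha_-)$, forcing $(\mathcal{L}_X\beta)\wedge\alpha_+\wedge\alpha_-=0$ and hence $\mathcal{L}_X\Omega=0$. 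Thus $X$ preserves a volume form and is incompressible Anosov.

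For (1)$\Rightarrow$(2), the key preliminary step is to symmetrize the expansion rates. Starting from any supporting $\alpha_+=\alpha_u-\alpha_s$ with induced rates $r_u,r_s$ (taken adapted, so $r_u>0>r_s$) and any $\beta$ with $\beta(X)=1$, a direct Cartan computation gives $\mathcal{L}_X(\beta\wedge\alpha_s\wedge\alpha_u)=(r_u+r_s)\,\beta\wedge\alpha_s\wedge\alpha_u$, so incompressibility furnishes a function $f$ with $X\cdot f=-(r_u+r_s)$. Rescaling $\alpha_u\rightsquigarrow e^{f}\alpha_u$ changes $r_u$ to $-r_s$ by Proposition~\ref{exppro}(4), so after relabeling I may assume $r_u=-r_s$. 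I would then set $\alpha_+:=\alpha_u-\alpha_s$ and $\alpha_-:=\alpha_u+\alpha_s$, and compute $\mathcal{L}_X\alpha_+=r_u(\alpha_u+\alpha_s)=r_u\alpha_-$ and $\mathcal{L}_X\alpha_-=r_u(\alpha_u-\alpha_s)=r_u\alpha_+$; these give $\alpha_-(R_{\alpha_+})=r_u^{-1}(\mathcal{L}_X\alpha_+)(R_{\alpha_+})=0$ and likewise $\alpha_+(R_{\alpha_-})=0$. A brief sign check ($\iota_X\alpha_+\wedge d\alpha_+=2r_u\,\alpha_s\wedge\alpha_u>0$ and $\iota_X\alpha_-\wedge d\alpha_-=-2r_u\,\alpha_s\wedge\alpha_u<0$) confirms that $(\ker\alpha_-,\ker\alpha_+)$ is a genuine bi-contact structure supporting $X$ with the required symmetry.

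The step I expect to be the main obstacle is regularity, rather than the algebra above. The forms $\alpha_u,\alpha_s$ are built from the weak invariant bundles, which are only $C^1$ in general, so the construction in (1)$\Rightarrow$(2) naturally produces a $C^1$ strongly bi-adapted structure, which is exactly the category in which this result is stated in \cite{hoz4}. Upgrading to the $C^\infty$ setting is delicate precisely because the defining conditions $\alpha_\pm(R_{\alpha_\mp})=0$ are \emph{closed} equalities rather than open conditions, so one cannot simply perturb the $C^1$ forms to smooth ones while preserving them; this is the subtlety handled in Remark~\ref{biadaptedrem}.
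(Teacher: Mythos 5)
Your proposal is correct and follows essentially the same route as the paper: your (1)$\Rightarrow$(2) construction --- using incompressibility to synchronize $r_u=-r_s$ and setting $\alpha_\pm:=\alpha_u\mp\alpha_s$, so that $\mathcal{L}_X\alpha_+=r_u\alpha_-$ and $\mathcal{L}_X\alpha_-=r_u\alpha_+$ --- is exactly the construction sketched in the paper just before Theorem~\ref{vol}, and your deferral of the $C^\infty$ upgrade to Remark~\ref{biadaptedrem} matches how the paper itself handles regularity (there the smooth case is obtained not by perturbation, which as you note cannot preserve the closed conditions, but by the separate construction $\alpha_-:=\mathcal{L}_{\bar{X}}\alpha_+$ from a smooth strongly adapted form via Theorem~\ref{main} and the Livsi\'c theorem). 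Your (2)$\Rightarrow$(1) argument --- deriving the proportionalities $\mathcal{L}_X\alpha_+=\rho\,\alpha_-$, $\mathcal{L}_X\alpha_-=\sigma\,\alpha_+$ from the two Reeb conditions and producing the invariant volume form $\beta\wedge\alpha_+\wedge\alpha_-$ --- is a correct and self-contained filling-in of the direction the paper leaves to \cite{hoz4}.
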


\begin{remark}\label{biadaptedrem}
The way that the proof Theorem~\ref{vol} is phrased in \cite{hoz4} yields this result in the $C^1$-category (in particular, the constructed contact forms as in (2) are a priori only as regular as the weak invariant bundles), with some improvements on partial regularity (see Remark~5.2 in \cite{hoz4}). But here, we would like to argue that this result in fact holds in the $C^\infty$ category.

Let $\alpha_+$ be a $C^\infty$ strongly adapted contact form for $X$. There exists a $C^\infty$ positive function $h$ such that $(\mathcal{L}_X \alpha_+) \wedge \alpha_+=h\iota_X\Omega$, where $\Omega$ is an invariant volume form, which is $C^\infty$ by an application of the {\em Livsić theorem} \cite{delallave}. Note that unlike the invariant volume form $\Omega$, the invariant transverse 2-form $\beta:=\iota_X\Omega$ is independent of reparametrizations of $X$. Now consider the $C^\infty$ reparametrization $\bar{X}:=\frac{1}{h}X$, which yields $(\mathcal{L}_{\bar{X}} \alpha_+) \wedge \alpha_+=\beta$. Define $\alpha_-:=\mathcal{L}_{\bar{X}}\alpha_+$ which is a $C^\infty$ supporting negative contact form by construction. Note that we have $\alpha_+(R_{\alpha_-})=0$, if and only if, $(\mathcal{L}_{\bar{X}}\alpha_-)\wedge \alpha_+=0$. The latter can be easily observed since $0=\mathcal{L}_{\bar{X}}\beta=(\mathcal{L}_{\bar{X}}\mathcal{L}_{\bar{X}} \alpha_+) \wedge \alpha_+=(\mathcal{L}_{\bar{X}} \alpha_-) \wedge \alpha_+$. Hence, $\alpha_-$ and $\alpha_+$ are $C^\infty$ contact forms satisfying the conditions of Theorem~\ref{vol} for an arbitrary incompressible Anosov 3-flow, even though the weak invariant bundles are only $C^1$ in general.

%As discussed in Remark~5.2 in \cite{hoz4}, one can start with an adapted contact form $\alpha_+$ of arbitrarily high regularity (in particular, $C^\infty$ if the flow is $C^\infty$). Then, solving a linear equation in terms of $\alpha_s,\alpha_u$, results in finding a Reeb direction for $\xi_-:=\langle X, R_{\alpha_+} \rangle$ contained in $\ker{\alpha_+}$. In particular, the solution is $C^\infty$, exactly when the weak invariant bundles are $C^\infty$, which only happens in the case of algebraic Anosov flows \cite{ghys}.
\end{remark}

The computation above show that a contact form $\alpha_+$ is strongly adapted when the functions $X\cdot r_s,\  X\cdot r_u$ are sufficiently small. Notice that $r_u-r_s>0$ by the contactness of $\alpha_+$. This can be achieved by a technical theorem discussed in the next section, namely the {\em asymptotic synchronization theorem}, which exploits and refines the very same averaging techniques of Holmes \cite{holmes} and Hirsch-Pugh-Shub \cite{invariant} which was used to show the existence of adapted norms. But first, it is useful to observe various formulations for a contact form being strongly adapted to an Anosov 3-flow. Recall that this is well defined up to (unoriented) scaling of the vector field $X$.

\begin{proposition}\label{strad}
Fix an Anosov flow $X^t$ and suppose $\alpha_+$ is adapted to $X$. TFAE:

(1) $\alpha_+$ is strongly adapted to $X$;

(2) $(\langle X,R_{\alpha_+}\rangle,\ker{\alpha_+})$ is a supporting bi-contact structure for $X$;

(3) $\mathcal{L}_X\alpha_+$ is a negative contact form;

(4) the norm $\mathcal{L}_X \alpha_+$ induces on $E^s\oplus E^u$ is projectively adapted;

(5) $(X,[R_{\alpha_+},X])$ is a co-oriented basis for $\ker{\alpha_+}$;

(6) $(R_{\alpha_+},[X,R_{\alpha_+}],X)$ is an oriented basis for $TM$;

(7) $\mathcal{L}_X \alpha_+ ([R_{\alpha_+},X])>0$;

(8) $[r_u+X\cdot \ln{r_u}]>[r_s + X\cdot \ln{(-r_s)}]$, where $r_u,r_s$ are expansion rates induced from $\alpha_+$.
\end{proposition}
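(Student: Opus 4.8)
The plan is to organize all eight conditions around a single scalar
$$\Lambda := (\mathcal{L}_X\alpha_+)([R_{\alpha_+},X]),$$
and to show each condition is equivalent to $\Lambda>0$. First I would record the purely formal identities coming from Cartan's formula. Since $\alpha_+(X)=0$ one has $(\mathcal{L}_X\alpha_+)(\cdot)=d\alpha_+(X,\cdot)$, and combining this with $\iota_{R_{\alpha_+}}d\alpha_+=0$ and $\alpha_+(R_{\alpha_+})=1$ yields first $\alpha_+([R_{\alpha_+},X])=0$, so that $[R_{\alpha_+},X]\in\ker\alpha_+$, and then the chain of equalities $(\mathcal{L}_X\alpha_+)([R_{\alpha_+},X])=d\alpha_+(X,[R_{\alpha_+},X])=d(\mathcal{L}_X\alpha_+)(X,R_{\alpha_+})$, all equal to $\Lambda$. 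These immediately collapse two conditions: since $d\alpha_+|_{\ker\alpha_+}$ is the positive area form defining the co-orientation, condition (5) is literally $d\alpha_+(X,[R_{\alpha_+},X])>0$, while (7) is $\Lambda>0$, so (5) $\Leftrightarrow$ (7) is automatic.

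Next I would dispatch the equivalences that are essentially already in hand. Condition (1) $\Leftrightarrow$ (2) is immediate: because $\alpha_+(R_{\alpha_+})=1\neq 0=\alpha_+(X)$, the plane $\langle X,R_{\alpha_+}\rangle$ contains $X$ and is transverse to $\ker\alpha_+$, so $(\langle X,R_{\alpha_+}\rangle,\ker\alpha_+)$ is a supporting bi-contact structure exactly when $\langle X,R_{\alpha_+}\rangle$ is a negative contact structure, which is the definition of strong adaptation. The displayed computation preceding the statement, together with Lemma~\ref{reebquad} identifying $\langle X,R_{\alpha_+}\rangle$ with $\langle X,r_ue_s+r_se_u\rangle$, already gives (1) $\Leftrightarrow$ (3) $\Leftrightarrow$ (4) $\Leftrightarrow$ (8). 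Hence it only remains to tie this established block to $\Lambda$, i.e. to prove (3) $\Leftrightarrow$ (7) and (6) $\Leftrightarrow$ (7).

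The crux is a sign comparison, which I would carry out by evaluating two volume forms on the single basis $(R_{\alpha_+},[X,R_{\alpha_+}],X)$. Writing $\gamma:=\mathcal{L}_X\alpha_+$ and expanding $\alpha\wedge d\alpha(a,b,c)=\alpha(a)d\alpha(b,c)-\alpha(b)d\alpha(a,c)+\alpha(c)d\alpha(a,b)$ on this basis, using the formal identities above, gives
$$\alpha_+\wedge d\alpha_+(R_{\alpha_+},[X,R_{\alpha_+}],X)=\Lambda,\qquad \gamma\wedge d\gamma(R_{\alpha_+},[X,R_{\alpha_+}],X)=-\Lambda^2.$$
Writing $\alpha_+\wedge d\alpha_+=c_+\Omega$ and $\gamma\wedge d\gamma=c_\gamma\Omega$, positivity of $\alpha_+$ forces $c_+>0$, so $\Omega(R_{\alpha_+},[X,R_{\alpha_+}],X)=\Lambda/c_+$ has the sign of $\Lambda$; this is exactly condition (6), giving (6) $\Leftrightarrow$ (7). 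Substituting into the second identity yields $c_\gamma=-c_+\Lambda$ for $\Lambda\neq 0$, so $\gamma=\mathcal{L}_X\alpha_+$ is a negative contact form, i.e. $c_\gamma<0$, precisely when $\Lambda>0$, which is (3) $\Leftrightarrow$ (7). Thus every condition reduces to $\Lambda>0$ and the loop closes. I expect the only delicate point to be the orientation bookkeeping between the co-orientation of $\ker\alpha_+$ and the ambient orientation; evaluating both $\alpha_+\wedge d\alpha_+$ and $\gamma\wedge d\gamma$ against the \emph{same} basis is precisely what cancels the ambient volume factor and makes the signs unambiguous, so this obstacle is more bookkeeping than substance.
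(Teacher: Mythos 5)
Your proposal is correct, and its skeleton matches the paper's: both proofs hinge on the same two formal identities (that $[R_{\alpha_+},X]\in\ker\alpha_+$, and that $d(\mathcal{L}_X\alpha_+)(X,R_{\alpha_+})=(\mathcal{L}_X\alpha_+)([R_{\alpha_+},X])$), and both reduce conditions (5)--(7) to the sign of this single scalar. Where you genuinely differ is in how the signs are pinned down. The paper handles the orientation bookkeeping by asserting that $(R_{\alpha_+},X)$ is a co-oriented basis of the negative contact structure $\ker\mathcal{L}_X\alpha_+$ and then "keeping track of orientations" --- a step that implicitly leans on the quadrant picture of Lemma~\ref{reebquad} and the paper's orientation conventions for $\alpha_s,\alpha_u$. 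You instead evaluate $\alpha_+\wedge d\alpha_+$ and $(\mathcal{L}_X\alpha_+)\wedge d(\mathcal{L}_X\alpha_+)$ on the \emph{same} triple $(R_{\alpha_+},[X,R_{\alpha_+}],X)$, getting $\Lambda$ and $-\Lambda^2$; this makes every sign unambiguous, decouples the orientation argument from the dynamics, and also yields (1)$\Leftrightarrow$(2) and (5)$\Leftrightarrow$(7) essentially for free. That is a cleaner and more self-contained treatment of the part the paper leaves to the reader, while the block (1)$\Leftrightarrow$(3)$\Leftrightarrow$(4)$\Leftrightarrow$(8) is cited identically in both.

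One point to tighten: your identity $c_\gamma=-c_+\Lambda$ is derived only at points where $\Lambda\neq 0$ (elsewhere the triple is not a basis and the evaluation is vacuous), so as written the implication (3)$\Rightarrow$(7) has a loose end at possible zeros of $\Lambda$. The fix is one line: if $\Lambda(p)=0$, complete $(R_{\alpha_+},X)$ to a basis $(R_{\alpha_+},w,X)$ and compute $\gamma\wedge d\gamma(R_{\alpha_+},w,X)=\gamma(w)\,d\gamma(X,R_{\alpha_+})=\gamma(w)\Lambda=0$ (using $\gamma(R_{\alpha_+})=\gamma(X)=0$), so $\gamma=\mathcal{L}_X\alpha_+$ fails to be contact at $p$. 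Equivalently, the cleaner statement is the global identity of $3$-forms $\gamma\wedge d\gamma=-\Lambda\,\alpha_+\wedge d\alpha_+$, from which (3)$\Leftrightarrow$(7) is immediate with no pointwise caveat.
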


\begin{proof}
The equivalence of $(1)\Leftrightarrow (2) \Leftrightarrow (3)\Leftrightarrow (8)$ is established in the earlier computations. To see the equivalence of $(4)$, $(5)$ and $(6)$ with these, notice that $[X,R_{\alpha_+}]\subset\ker{\alpha_+}$ for any contact form $\alpha_+$ and any Legendrian vector field $X\subset \ker{\alpha_+}$, since
$$\alpha[X,R_{\alpha_+}]=d\alpha_+(R_{\alpha_+},X)+X\cdot \alpha_+(R_{\alpha_+})-R_{\alpha_+} \cdot {\alpha_+}(X)=0.$$
But in our case, it is easy to check that $(R_{\alpha_+},X)$ is a co-oriented basis for the negative contact structure $\ker{\mathcal{L}_X\alpha_+}=\langle X,R_{\alpha_+}\rangle$, which means
$$0>d\mathcal{L}_X\alpha_+ (R_{\alpha_+},X)=\mathcal{L}_X\alpha_+[X,R_{\alpha_+}],$$
which, keeping track of orientations, one can observe that it is equivalent to $([X,R_{\alpha_+}],X)$ being co-oriented basis for $\ker{\alpha_+}$. Note that this is equivalent to $(R_{\alpha_+},[X,R_{\alpha_+}],X)$ being an oriented basis for $TM$.
\end{proof}

Modulo the asymptotic synchronization theorem, we are ready to present the contact geometric characterization of Anosov 3-flows in terms of (strongly adapted) contact geometry.

 \begin{theorem}\label{main}
 Let $X$ be a non-vanishing vector field on $M$. TFAE:
 
 (1) $X$ generates an Anosov flow;
 
 (2) $X$ is supported by a {\em strongly adapted} contact form $\alpha_+$.
 \end{theorem}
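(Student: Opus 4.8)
The implication $(2)\Rightarrow(1)$ is essentially already in hand. A strongly adapted contact form is by definition a supporting contact form, so $X$ is supported by a bi-contact structure and is therefore projectively Anosov by the theorem of Mitsumatsu and Eliashberg--Thurston. Setting $\xi_-:=\langle X,R_{\alpha_+}\rangle$, which is a negative contact structure transverse to $\ker\alpha_+$ along $\langle X\rangle$ (as $\alpha_+(R_{\alpha_+})=1\neq 0$ while $X\subset\ker\alpha_+$), we obtain a supporting bi-contact structure $(\xi_-,\ker\alpha_+)$ with $R_{\alpha_+}\subset\xi_-$, and Corollary~\ref{strcor} then forces $X^t$ to be Anosov. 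So the content of the theorem is the converse $(1)\Rightarrow(2)$.

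For $(1)\Rightarrow(2)$, the plan is to build the strongly adapted contact form from the invariant splitting, after correcting the \emph{variation of the expansion rates along the flow}. Starting from any adapted supporting contact form $\alpha_+=\alpha_u-\alpha_s$ with induced rates $r_u>0>r_s$, Proposition~\ref{strad}(8) identifies strong adaptation with the pointwise inequality
$$\big[r_u+X\cdot\ln r_u\big]>\big[r_s+X\cdot\ln(-r_s)\big].$$
Since $\alpha_+$ is adapted, the static part $r_u-r_s$ is a positive continuous function on the compact manifold $M$, hence bounded below by a positive constant; the only obstruction to the inequality is the dynamic part $X\cdot\ln r_u-X\cdot\ln(-r_s)$, which a priori may be large and of the wrong sign.

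The key step is to kill this dynamic part using the asymptotic synchronization theorem (Theorem~\ref{expunif}), applied to the line bundles $E^u$ and $E^s$. Via the one-to-one correspondence between norms on $E^u$ (resp.\ $E^s$) and foliation forms $\alpha_u$ (resp.\ $\alpha_s$) recorded in Section~\ref{s2}, this produces a new contact form $\alpha_+:=\alpha_u-\alpha_s$ whose induced rates $r_u,-r_s$ remain uniformly bounded away from $0$ (being arbitrarily close to the positive, resp.\ negative, Lyapunov exponents, so in particular $r_u>r_s$ keeps $\alpha_+$ positive contact) while the logarithmic derivatives $X\cdot\ln r_u=(X\cdot r_u)/r_u$ and $X\cdot\ln(-r_s)=(X\cdot r_s)/r_s$ are made arbitrarily small in sup-norm. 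Choosing the deformation far enough along, the dynamic part is dominated by the uniform positive lower bound on $r_u-r_s$, so the inequality of Proposition~\ref{strad}(8) holds and $\alpha_+$ is strongly adapted.

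Two technical points remain, and the second is where I expect the real work to lie. First, the bundles $E^u,E^s$ are only $C^1$ in general, so the rates and their flow-derivatives must be treated at that regularity (Proposition~\ref{exppro}(2)); once a $C^1$ strongly adapted form is obtained, the $C^2$-openness of the strong adaptation condition lets us perturb it to a genuine $C^\infty$ strongly adapted contact form. Second, and this is the main obstacle, one must actually establish Theorem~\ref{expunif}: that the averaging technique of Holmes and Hirsch--Pugh--Shub can be refined so as not merely to produce an adapted norm, but to drive the flow-derivative of the log-rate to zero (exponentially fast along the deformation) while pinning the rate near the Lyapunov exponent. This synchronization of the rate \emph{along orbits}, rather than mere adaptedness, is the genuinely new analytic ingredient, and it is deferred to Section~\ref{s4}.
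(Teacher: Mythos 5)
Your proposal is correct and takes essentially the same route as the paper: reduce strong adaptation to the inequality of Proposition~\ref{strad}(8), kill the logarithmic-derivative terms $X\cdot\ln r_u$, $X\cdot\ln(-r_s)$ via the asymptotic synchronization of Theorem~\ref{expunif} while keeping the rates uniformly bounded away from zero, and then smooth. The only minor differences are that the paper first reparametrizes $X$ so that $r_s\equiv -1$ (making the stable dynamic term vanish identically) and synchronizes only $\alpha_u$, whereas you synchronize both bundles, and that the final smoothing cannot rest on naive $C^2$-openness applied to a merely $C^1$ form—the paper instead approximates $\alpha_{u,T}$ by smooth forms with explicit control on the expansion rate and its flow derivative (Section~3.6 of \cite{hoz6}), which is the correct mechanism behind the perturbation you invoke.
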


 \begin{proof}

 $(2)\Rightarrow (1)$ is discussed above in Corollary~\ref{strcor} and is a consequence of \cite{hoz3} (Section 6).
 
  For $(1)\Rightarrow (2)$, let $\alpha_+$ be any adapted contact form for $X$ and write  $\alpha_+=\alpha_u-\alpha_s$, inducing expansion rates $r_{u}$ from $\alpha_{u}$ (see Convention~\ref{added}). Reparametrize $X$ such that $r_s\equiv -1$. By Proposition~\ref{strad}, we have 
  $$\alpha_+ \text{ strongly adapted}\Longleftrightarrow [r_u+X\cdot \ln{r_u}]-[r_s+X\cdot \ln{(-r_s)}]=1+r_u+X\cdot \ln{r_u}>0.$$
  Since $r_u>0$, this means that having $|X\cdot \ln{r_u}|<1$ is enough to guarantee that $\alpha_+$ is strongly adapted. The technical part of this proof is to show that one can deform such $\alpha_u$ to satisfy this condition. This is possible thanks to the {\em asymptotic synchronization process} discussed in Section~\ref{s4}. More precisely, it is enough to prove the following lemma, whose technical core is left to be proved in the next section.
  
  \begin{lemma}\label{tech}
  Given $\alpha_u$ as above, there exists a family of 1-forms $\alpha_{u,T}$ for $T\geq 0$ such that $\ker{\alpha_{u,T}}=\ker{\alpha_u}$, $\alpha_{u,0}=\alpha_u$ and
  $$[1+r_{u,T}+X\cdot \ln{r_{u,T}}] -[ 1+\frac{1}{T}\int^T_0 r_u(\tau) d\tau] \mbox{\Large$\overset{\text{unif.}}{\longrightarrow}$} 0 \ \text{ as }T \longrightarrow \infty,$$
 where $r_{u,T}$ is the expansion rate induced from $\alpha_{u,T}$ and $\mbox{\Large$\overset{\text{unif.}}{\longrightarrow}$}$ denotes the uniform convergence.
  \end{lemma}
  
  \begin{proof}
  The idea of the proof is to take an appropriate average of the norm induced by $\alpha_u$, using the flow generated by $X$. This has been carried out carefully in the next section and exploiting an averaging technique of Holmes \cite{holmes} and Hirsch-Pugh-Shub \cite{invariant}. More specifically, we construct in Theorem~\ref{expunif},
   a family of adapted 1-forms $\alpha_{u,T}$ such that 
   $$\begin{cases}
 r_{u,T} - \frac{1}{T}\int_0^Tr_u (\tau)d\tau \mbox{\Large$\overset{\text{unif.}}{\longrightarrow}$} 0 \\
  X\cdot r_{u,T}\mbox{\Large$\overset{\text{unif.}}{\longrightarrow}$}  0\end{cases}\text{ as }T \longrightarrow \infty, $$
   where $ r_{u,T}$ is the expansion rate induced from $\alpha_{u,T}$. Notice that since $\frac{1}{T}\int_0^Tr_u (\tau)d\tau\geq\min{r_u}>0$, this implies
  $$  X\cdot \ln{r_{u,T}}=\frac{ X\cdot r_{u,T}}{r_{u,T}}\ \mbox{\Large$\overset{\text{unif.}}{\longrightarrow}$}   0\ \text{ as }T \longrightarrow \infty,$$
  which completes the proof of the lemma.
   %This has been carried out carefully in the next section, exploiting a technique of Holmes \cite{holmes} and Hirsch-Pugh-Shub \cite{invariant}. We postpone the proof of this claim to Section~\ref{s4} (see Theorem~\ref{expunif} and Remark~\ref{finish}).
  \end{proof}
  
Now, using the family of 1-forms $\alpha_{u,T}$ given in Lemma~\ref{tech}, one can define an (in general only $C^1$) family of contact forms $\alpha_{+,T}:=\alpha_{u,T}-\alpha_s$. Furthermore, there exists some $T_0 \geq 0$, where $\alpha_{+,T}$ is strongly adapted for any $T>T_0$. That is thanks to the fact that $$1+\frac{1}{T}\int^T_0 r_u(\tau) d\tau\geq 1+\min{r_u}>0.$$
Finally, for any $T>T_0$, one can approximate $\alpha_{u,T}$ with a $C^\infty$ supporting contact form $\bar{\alpha}_{u,T}$ such that the resulting expansion rate $\bar{r}_{u,T}$ and the derivative $X\cdot \bar{r}_{u,T}$ are arbitrarily close to $r_{u,T}$ and $X\cdot r_{u,T}$, respectively (see Section~3.6 of \cite{hoz6}). Therefore, the $C^\infty$ contact forms $\bar{\alpha}_{u,T}$ can be chosen to be strongly adapted as well. 

 \end{proof}

       \begin{figure}[h]
\centering
\begin{overpic}[width=0.85\textwidth]{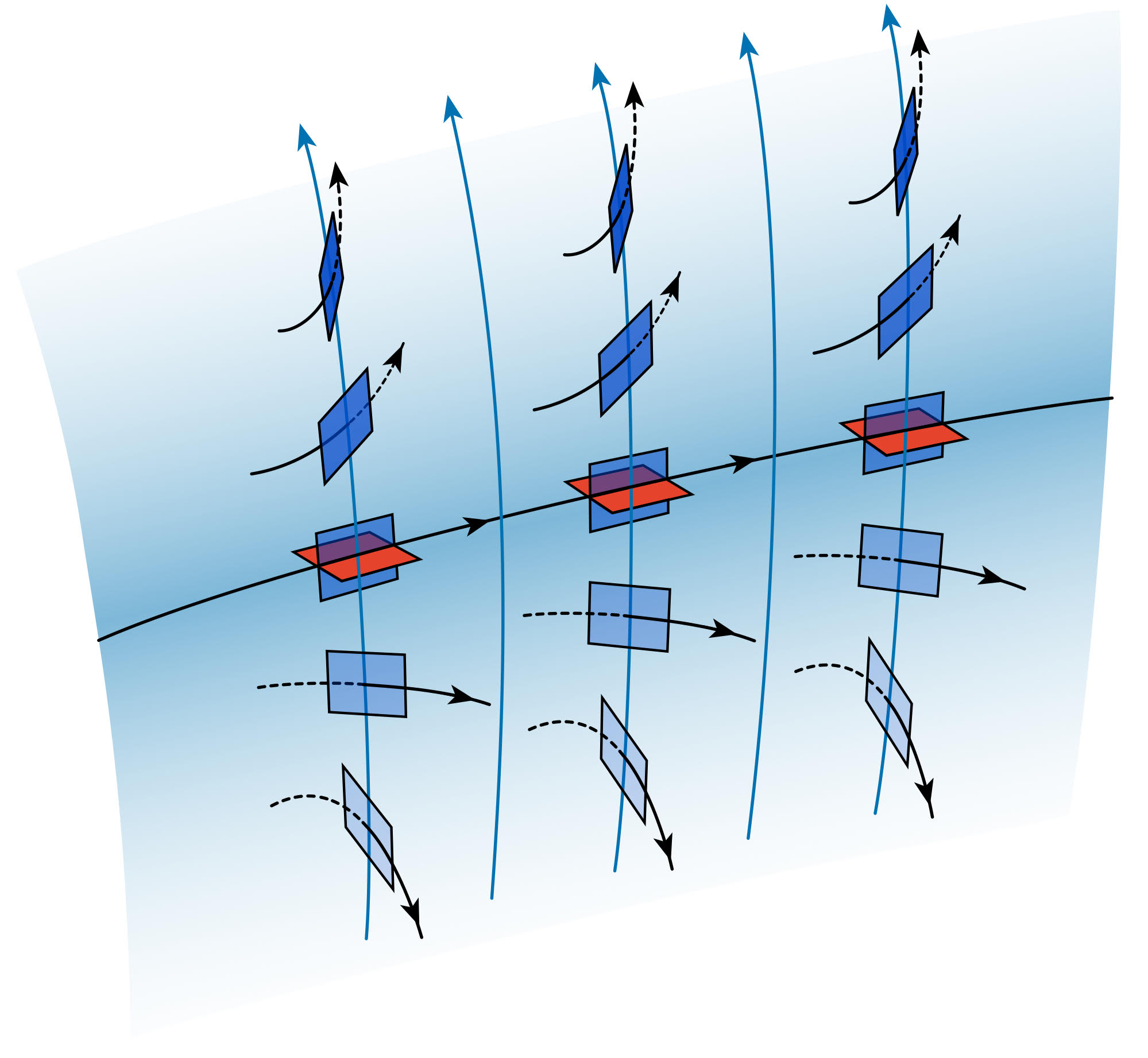}
        \put(243,190){$\xi_+$}
          \put(234,213){$\xi_-$}
        \put(255,215){$X$}
        \put(155,338){$R_+$}

  \end{overpic}

\caption{Local model for strongly adapted contact geometry}
\end{figure}

 %\textcolor{red}{convex surface idea, explain the local model}
 
 Finally, we would like to remark that the contact geometric model provided in Theorem~\ref{main} describes the {\em minimal} purely contact geometric conditions guaranteeing the Anosovity of a Legendrian flow. However, the convergence obtained in the asymptotic synchronization Theorem~\ref{expunif} is much stronger and indeed allows one to {\em asymptotically synchronize} this contact geometric model along the flow. 
 
 More precisely, recall that $\alpha_+$ being strongly adapted to $X$ is equivalent to the norms induced from $\alpha_+$ and $\mathcal{L}_X \alpha_+$ being adapted and projectively adapted, respectively (see Proposition~\ref{strad}). One can strengthen this condition by requiring the norms induced from both $\alpha_+$ and $\mathcal{L}_X \alpha_+$ to be adapted. It is easily seen that is equivalent to requiring
 $$[r_s+X\cdot \ln{(-r_s)}]<0<[r_u+X\cdot \ln{r_u}],$$
 which is a condition stronger than strong adaptation of $\alpha_+$ (see Proposition~\ref{strad}).
 
We naturally call a norm on $E^u$ (or $E^s$) with $[r_u+X\cdot \ln{r_u}]>0$ (or $[r_s+X\cdot \ln{(-r_s)}]<0$) {\em strongly adapted} (see Definition/Proposition~\ref{snorm} for a more precise and general definition) and note that this condition depends on the parametrization (and is not invariant under scaling of the generating vector field $X$). Therefore, the condition on such $\alpha_+$ can be described by stating that the norms induced from $\alpha_+$ are strongly adapted.
 The asymptotic synchronization process of Theorem~\ref{expunif} similarly provides a contact form with this property as $X\cdot \ln{(-r_s)}$ and $X\cdot r_u$ can be taken uniformly small. So, Similar to Theorem~\ref{main}, we can have the following (see Corollary~\ref{strnormex}).
 
 \begin{theorem}\label{mains}
 Let $X$ be a non-vanishing vector field on $M$. TFAE:
 
 (1) $X$ generates an Anosov flow;
 
 (2) $X$ is supported by a contact form $\alpha_+$ inducing strongly adapted norms on $E^u$ and $E^s$.
 \end{theorem}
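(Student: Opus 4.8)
The plan is to follow the two-step template of Theorem~\ref{main}, but to run the asymptotic synchronization on \emph{both} invariant bundles rather than only the unstable one. The easy direction is $(2)\Rightarrow(1)$: if the norms induced from $\alpha_+$ on $E^u$ and $E^s$ are strongly adapted, then in particular $r_u>0>r_s$, so $\alpha_+$ is adapted; moreover the two defining inequalities combine to give $[r_u+X\cdot\ln r_u]>0>[r_s+X\cdot\ln(-r_s)]$, hence $[r_u+X\cdot\ln r_u]>[r_s+X\cdot\ln(-r_s)]$, which is exactly criterion (8) of Proposition~\ref{strad}. Thus $\alpha_+$ is strongly adapted and $X$ is Anosov by Theorem~\ref{main} (equivalently, by Corollary~\ref{strcor}).

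For $(1)\Rightarrow(2)$ I would first emphasize the structural difference with the proof of Theorem~\ref{main}. There one is allowed to reparametrize $X$ so that $r_s\equiv-1$, trivializing the stable-bundle condition, because being a \emph{strongly adapted contact form} is invariant under (unoriented) reparametrization. Here, however, the target condition is that the induced \emph{norms} be strongly adapted, and this is parametrization-dependent while $X$ is fixed; so reparametrization is unavailable and both bundles must be controlled simultaneously. The plan is therefore to start from an arbitrary adapted contact form $\alpha_+=\alpha_u-\alpha_s$ with induced rates $r_u>0>r_s$ and to invoke Theorem~\ref{expunif} once for $E^u$ and once for $E^s$; since its statement concerns general flow actions on vector bundles, it applies verbatim to the contracting bundle.

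Applying Theorem~\ref{expunif} to $E^u$ yields $1$-forms $\alpha_{u,T}$ with $\ker\alpha_{u,T}=\ker\alpha_u$ and
$$r_{u,T}-\tfrac1T\!\int_0^T r_u(\tau)\,d\tau\ \mbox{\Large$\overset{\text{unif.}}{\longrightarrow}$}\ 0,\qquad X\cdot r_{u,T}\ \mbox{\Large$\overset{\text{unif.}}{\longrightarrow}$}\ 0.$$
Since $\tfrac1T\int_0^T r_u(\tau)\,d\tau\ge\min r_u>0$, the identity $X\cdot\ln r_{u,T}=(X\cdot r_{u,T})/r_{u,T}$ forces $X\cdot\ln r_{u,T}\to0$ uniformly, so $r_{u,T}+X\cdot\ln r_{u,T}>0$ for all large $T$. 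Symmetrically, applying Theorem~\ref{expunif} to $E^s$ produces $\alpha_{s,T}$ whose rate $r_{s,T}$ converges uniformly to $\tfrac1T\int_0^T r_s(\tau)\,d\tau\le\max r_s<0$ with $X\cdot r_{s,T}\to0$; then $X\cdot\ln(-r_{s,T})=-(X\cdot r_{s,T})/(-r_{s,T})\to0$ and $r_{s,T}+X\cdot\ln(-r_{s,T})<0$ for all large $T$. Setting $\alpha_{+,T}:=\alpha_{u,T}-\alpha_{s,T}$, the inequalities $r_{u,T}>0>r_{s,T}$ guarantee it is a positive contact form for large $T$, and by construction both induced norms are strongly adapted. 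A final $C^\infty$-approximation (as in Section~3.6 of \cite{hoz6}), which perturbs $r_{u,T},r_{s,T}$ and their $X$-derivatives arbitrarily little, preserves these open conditions and delivers the required smooth $\alpha_+$.

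The genuinely hard input is the asymptotic synchronization Theorem~\ref{expunif} itself, which is the technical core deferred to Section~\ref{s4}; granting it, the only point requiring care here is that the time-averages $\tfrac1T\int_0^T r_u$ and $\tfrac1T\int_0^T r_s$ stay bounded away from $0$ with the correct signs, which is immediate from $r_u>0>r_s$. I expect no further obstacle, since running the synchronization independently on the two line bundles does not interfere with positivity of the resulting contact form.
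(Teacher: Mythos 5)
Your proof is correct and follows essentially the same route as the paper: the direction $(2)\Rightarrow(1)$ reduces to criterion (8) of Proposition~\ref{strad} together with Theorem~\ref{main}, and the direction $(1)\Rightarrow(2)$ applies the asymptotic synchronization Theorem~\ref{expunif} to both invariant bundles (cf.\ Corollary~\ref{strnormex}), checks positivity of $\alpha_{u,T}-\alpha_{s,T}$ via $r_{u,T}>0>r_{s,T}$, and finishes with the $C^\infty$-approximation of \cite{hoz6}. Your explicit observation that the reparametrization trick of Theorem~\ref{main} is unavailable here, because strong adaptation of \emph{norms} (unlike that of contact forms) depends on the parametrization of $X$, is precisely the point the paper leaves implicit, and it correctly explains why both bundles must be synchronized simultaneously.
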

 
 The benefit of considering such sub class of adapted contact forms is that it is convex (while fixing $X $), which can be useful in computations (see Lemma~\ref{adnormconvex}). This is a refinement of the strong adapted contact geometry which can be pushed further.
 
 \begin{remark}\label{strongfilter}
 We can further refine our contact geometric model by imposing further assumptions, resulting in the following filtration of the space of strongly adapted contact forms given for an Anosov flows $X^t$:
 
 $$...\ \mbox{\Large$\subset$}\bigg\{\substack{\text{Strongly adapted $\alpha_+$} \\  \text{strongly adapted $\mathcal{L}_X\alpha_+$} \\ \text{Adapted $\mathcal{L}_X\mathcal{L}_X \alpha_+$}} \bigg\} \mbox{\Large$\subset$}
 \bigg\{\substack{\text{Strongly adapted $\alpha_+$} \\  \text{Strongly adapted $\mathcal{L}_X\alpha_+$}} \bigg\} \mbox{\Large$\subset$}
 \bigg\{\substack{\text{Strongly adapted $\alpha_+$} \\  \text{Adapted $\mathcal{L}_X\alpha_+$}} \bigg\} \mbox{\Large$\subset$}
\bigg\{\substack{\text{Strongly adapted $\alpha_+$} \\  \text{}} \bigg\}.$$
Notice that all the classes defined only in terms of strong adaptation (and not adaptation) can be described in purely contact geometric terms. Theorem~\ref{expunif} implies that all these sets are in fact non-empty for an Anosov flow. Therefore, given an Anosov flow $X^t$, one can equip it with an adapted contact form {\em with an arbitrary degree of strong adaptations}. Conversely, it is obvious that for a projectively Anosov flow, any of the above sets being non-empty implies Anosovity. Therefore, further refinements the characterizations given in Theorem~\ref{main} and \ref{mains} are possible.
\end{remark}

%%%%%%%%%%%%%%%%%%%

\section{Asymptotic synchronization process}\label{s4}

This section is devoted to proving the asymptotic synchronization theorem, which is a technical ingredient needed in the proof of Theorem~\ref{main}. We discuss our result in a more general setting for the sake of future reference, as we believe the applications of this theorem to go beyond what is proven in this paper. As a matter of fact, this theorem is already used in \cite{hoz5} to study and resolve an old problem in geometric analysis of contact metrics, i.e. the {\em Chern-Hamilton problem}.

%\begin{remark}
%Due to the exponential nature of the convergence, we expect the higher derivatives of $r_{u,T}$ to approach $0$ as well.
%\end{remark}
 In this section, we let $\pi:E\rightarrow M$ be a line bundle over a (not necessarily closed) manifold $M$ of arbitrary dimension and $X^t$ be a flow on $M$, inducing a bundle action $X_*$ on $E$, which is $N$-times differentiable along the flow, where $1\leq N\in\mathbb{N \cup \{\infty\}}$. More precisely, we are assuming the existence of a family of fiberwise linear and non-trivial maps $X_*^t:E\rightarrow E$ such that the diagram
 \[\begin{tikzcd}
E \arrow{r}{X^t_*} \arrow[swap]{d}{\pi} & E \arrow{d}{\pi} \\
M \arrow{r}{X^t} & M
	\end{tikzcd}\]
 commutes for any $t\in \mathbb{R}$, where $X_*^t$ is $N$ times differentiable with respect to $t$. We further assume that the fiberwise norm $||.||$ on $E\rightarrow M$ is $N$-time differentiable under the action of $X^t_*$, i.e. $||X^ t_*||$ is $N$-times differentiable with respect to $t$.
 
 The following lemma shows that we can deform a given norm on $E$ in order to make {\em arbitrarily uniform} along the flow, in the sense of the theorem that follows. The main idea is to appropriately take the average of an arbitrary norm along the flowlines for sufficiently long time. This relies on an averaging technique of Holmes \cite{holmes}, reiterated by Hirsch-Pugh-Shub \cite{invariant}, which is used to prove the existence of adapted norms on the invariant bundles of Anosov flows. The theorem below is a strong refinement of their result to show that moreover, the variations of such norm along the flow can also be controlled to be arbitrary small. Gourmelon \cite{gour} uses the same technique for projectively Anosov flows (more generally, flows with dominated splitting) and our proof is closer in spirit to his computations. 
 
 In the following theorem, $\overset{\text{exp.}}{\longrightarrow}$ denotes exponential convergence as $T \longrightarrow \infty$.

 \begin{theorem}\label{expunif}
 Suppose $X^t$ is a complete flow on a manifold $M$ of arbitrary dimension, inducing an action $X^t_*$ on the line bundle $E\rightarrow M$ equipped with a fiberwise norm $||.||$ as above.
There exists a deformation $||.||_T$ for $T\in [0,\infty)$ of $||.||$, {\em exponentially uniformizing $||.||$} along $X^t$, by which we mean that $||.||_0=||.||$ and the induced expansion rates of $||.||_T$, defined by
$$r_T:=\partial_t \cdot \ln{\big(||X_*^t ||_T\big)}\bigg|_{t=0}:M\rightarrow \mathbb{R},$$
satisfy
 $$r_{T}-\frac{1}{T}[\int_0^T r_0(\tau)d\tau ]\mbox{\Large$\overset{\text{exp.}}{\longrightarrow}$} 0$$
 and
$$X^{(j)}\cdot r_{T}-\frac{1}{T}[X^{(j-1)}\cdot r_{0}(T)-X^{(j-1)}\cdot r_{0}]\mbox{\Large$\overset{\text{exp.}}{\longrightarrow}$} 0$$
for $1\leq j\leq N-1$, where $X^{(j)}$ refers to the $j$th derivative along $X^t$ and $r_0(t)=r_0\circ X^t$.
\end{theorem}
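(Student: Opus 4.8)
The plan is to realize the deformation through a scalar potential and reduce the whole statement to one elementary identity for the flow--derivative of a forward average. Working locally with a nowhere-zero section and recalling (as in Proposition~\ref{exppro}(5)) that for $v\in E_p$ one has $\|X^t_*v\|=\|v\|\,e^{I_t(p)}$ with $I_t(p):=\int_0^t r_0(\tau)\,d\tau$ and $r_0(\tau)=r_0\circ X^\tau$, I would first note that on a line bundle every fiberwise norm is $e^{h}\|\cdot\|$ for a function $h\colon M\to\mathbb{R}$, and that (as in Proposition~\ref{exppro}(4)) its expansion rate is $r_0+X\cdot h$. So the problem becomes: choose $h_T$ with $h_0=0$ whose flow--derivative pushes $r_0$ onto the average $\bar r_T:=\tfrac1T\int_0^T r_0(\tau)\,d\tau=\tfrac1T I_T$. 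The single identity driving everything is
\[
X\cdot I_\tau = r_0(\tau)-r_0,
\]
obtained by differentiating $I_\tau(X^up)=\int_u^{\tau+u}r_0(X^\sigma p)\,d\sigma$ at $u=0$.

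With this in hand I would take the forward geometric average, the line-bundle avatar of the Holmes--Hirsch-Pugh-Shub--Gourmelon construction,
\[
\|v\|_T:=\exp\!\Big(\tfrac1T\textstyle\int_0^T \ln\|X^\tau_*v\|\,d\tau\Big),
\]
which is a genuine norm by homogeneity and satisfies $\|\cdot\|_0=\|\cdot\|$. Its potential is $h_T=\tfrac1T\int_0^T I_\tau\,d\tau$, so $X\cdot h_T=\tfrac1T\int_0^T (r_0(\tau)-r_0)\,d\tau=\bar r_T-r_0$, whence $r_T=\bar r_T$. Equivalently, shifting the averaging window and differentiating in $t$ produces the boundary terms directly: $r_T=\tfrac1T\big(\ln\|X^T_*v\|-\ln\|v\|\big)=\tfrac1T I_T$. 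This already yields the first convergence, with the difference vanishing \emph{identically}, and is the heart of the construction.

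For the derivative statements I would iterate the same identity. Since $X\cdot I_T=r_0(T)-r_0$ and, by the flow property, $X\cdot\big(X^{(k)}\!\cdot r_0(T)\big)=X^{(k+1)}\!\cdot r_0(T)$, a one-line induction gives
\[
X^{(j)}\cdot r_T=\tfrac1T\big(X^{(j-1)}\!\cdot r_0(T)-X^{(j-1)}\!\cdot r_0\big)\qquad(1\le j\le N-1),
\]
which is exactly the quantity subtracted in the theorem, so again the prescribed difference is identically zero and the exponential convergence is immediate. The real bookkeeping here is regularity: one must check that $\|X^t_*\|\in C^N$ in $t$ makes $r_0$ differentiable along the flow up to order $N-1$, so that every boundary term $X^{(j-1)}\!\cdot r_0(T)$ is defined, and that the passage to $\bar r_T$ commutes with these $j\le N-1$ flow--derivatives uniformly on $M$ (which on non-compact $M$ requires the standing boundedness of $r_0$ and its flow--derivatives).

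The point I expect to require the most care is not the algebra but the control one demands of the family $\|\cdot\|_T$. The flat average above is clean precisely because it lets the norm drift: when $r_0$ has nonzero mean (the relevant case for the unstable bundle, where $I_\tau\to\infty$) the potential $h_T$ grows linearly in $T$, so $\|\cdot\|_T$ is no longer uniformly comparable to $\|\cdot\|$. The substantive refinement—closer to Gourmelon's computations and the form useful in the applications—is to run the averaging so as to keep $\|\cdot\|_T$ controlled, at the cost that the identities above then hold only up to remainder and boundary contributions rather than cancelling exactly; the main technical work is to estimate those remainders, together with each of their flow--derivatives up to order $N-1$, and show they decay exponentially in $T$ uniformly on $M$. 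This uniform, simultaneous exponential estimate is the step I would budget the most effort for, while the identities of the previous two paragraphs supply the exact limits that these estimates must converge to.
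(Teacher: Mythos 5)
Your proposal is correct, and it takes a genuinely different route from the paper's. The paper runs the Holmes--Hirsch--Pugh--Shub averaging: it fixes a small $\epsilon_T>0$, forms the cocycles $Q_t,S_t$ out of the running average $\frac1T\int_0^T r_0(\tau)\,d\tau\mp\epsilon_T$, proves a domination lemma $q\prec X_*|_E\prec s$, and sets
$\|v\|_T^2=\int_{-\infty}^0\|X_*^t v\|^2Q_t^{-2}\,dt+\int_0^{\infty}\|X_*^t v\|^2S_t^{-2}\,dt$;
a computation then gives $r_T=\epsilon_T A_T+\frac1T\int_0^T r_0(\tau)\,d\tau$ with $|A_T|\le1$, and an analogous formula for $X\cdot r_T$, so the differences in the statement are nonzero but become exponentially small after choosing $\epsilon_T:=e^{-T(\max r-\min r)}$. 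Your finite-time logarithmic average $\|v\|_T=\exp\big(\frac1T\int_0^T\ln\|X_*^\tau v\|\,d\tau\big)$ instead exploits the line-bundle structure, under which homogeneity alone makes this a genuine fiberwise norm; the cocycle property $X_*^\tau X_*^t=X_*^{\tau+t}$ and the fundamental theorem of calculus then turn every difference in the statement into an exact identity, $r_T=\frac1T\int_0^T r_0(\tau)\,d\tau$ and $X^{(j)}\cdot r_T=\frac1T\big[X^{(j-1)}\cdot r_0(T)-X^{(j-1)}\cdot r_0\big]$ on the nose, so the exponential convergence is vacuous. Your route is shorter, gives $\|\cdot\|_0=\|\cdot\|$ exactly (by continuity as $T\to0^+$), and needs no boundedness of $r_0$, whereas the paper's choice of $\epsilon_T$ implicitly requires $\max r-\min r<\infty$, an extra hypothesis on non-compact $M$; it also plainly suffices for the downstream application, since in Lemma~\ref{tech} one only needs $r_{u,T}$ close to the running average and $X\cdot r_{u,T}\to0$ uniformly, both of which your identities deliver (at rate $1/T$, using boundedness of $r_u$ on the closed $3$-manifold). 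What the paper's heavier construction buys is robustness beyond the hypotheses of this theorem: the $L^2$-type weighted average survives in higher-rank bundles and dominated splittings in the style of Gourmelon, where a fiberwise geometric mean fails the triangle inequality. Finally, the worry in your last paragraph about keeping $\|\cdot\|_T$ uniformly comparable to $\|\cdot\|$ is not a gap: the theorem does not ask for such control, the application does not use it, and the paper's own norms lack it as well (its ratio $B_T=\|v\|^2/\|v\|_T^2$ is only pinned between $\epsilon_T e^{\mp T(\max r-\min r)}$), so no remainder estimates are actually owed.
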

 
 \begin{proof}
 Let $r=r_0$ be the expansion rate induced from the norm $||.||$ on $E$. That is
 $$r:=\partial_t \cdot \ln{\big(||X_*^t ||\big)}\bigg|_{t=0}:M\rightarrow \mathbb{R}.$$
 
 We start by defining the following functions for arbitrary $T,\epsilon_T>0$ ($T$ should be thought as very large and $\epsilon_T$ as very small):
 $$\begin{cases}
 q:=exp(-\epsilon_T+\frac{1}{T}\int_0^Tr(\tau)d\tau) \\
 
 s:=exp(+\epsilon_T+\frac{1}{T}\int_0^Tr(\tau)d\tau)
 \end{cases},$$
 where $r(t)=r \circ X^t:M\rightarrow \mathbb{R}$ (note $q,s:M\rightarrow \mathbb{R}$).
 
  We then define the functions $Q_t,S_t:M\rightarrow \mathbb{R}$ for arbitrary $t$.
  $$\begin{cases}
 Q_t:=exp(\int_0^t  \ln{q(u)}  du)=exp(\int_0^t \{-\epsilon_T +\frac{1}{T}\int_0^Tr(u+\tau)d\tau \}du) \\
 
 S_t:=exp(\int_0^t  \ln{s(u)} du)=exp(\int_0^t \{+\epsilon_T +\frac{1}{T}\int_0^Tr(u+\tau)d\tau \}du)
 \end{cases}$$
 and can easily check that $Q_0=S_0\equiv 1$ and $Q_t,S_t$ are cocycles, i.e. $Q_{t+k}=Q_tQ_k(t)$ and $S_{t+k}=S_tS_k(t)$, where $Q_k(t)=Q_k\circ X^t$ and similarly, for $S_k(t)$.
 
 \begin{lemma}
 We have the {\em domination relation} $q\prec X_*|_E\prec s$. By this, we mean that for any $v\in E$, the quantities $\frac{||X_*^t(v) ||}{S_t}$ and $\frac{||X_*^t(v) ||}{Q_t}$ converge exponentially to zero everywhere, when $t\rightarrow +\infty$ and $t\rightarrow -\infty$, respectively.
 \end{lemma}
 
 \begin{proof}
 For $X_*|_E\prec s$, we need to show $\frac{||X_*^t||}{S_t}$ goes exponentially to $0$ as $t\rightarrow \infty$, i.e. $\lim_{t\rightarrow \infty} \ln{\frac{||X_*^t||}{S_t}}=0$. Compute 
 $$\ln{\frac{||X_*^t||}{S_t}}=\int_0^t r (\tau)d\tau-\epsilon_Tt-\int_0^t \{ \frac{1}{T} \int_0^T r(u+\tau)du\}d\tau$$
 $$=\int_0^t r (\tau)d\tau-\epsilon_Tt-\int_0^T \frac{\tau}{T}r(\tau)d\tau -\int_T^t r(\tau)d\tau -\int_0^T \frac{T-\tau}{T} r(t+\tau)d\tau$$
 $$=-\epsilon_Tt+\int_0^T \frac{T-\tau}{T}(r(\tau)-r(t+\tau))d\tau,$$
 where the second equality is achieved after some integration by parts and substitutions.
 
 Now, note that 
 $$|\int_0^T \frac{T-\tau}{T}(r(\tau)-r(t+\tau))d\tau|\leq (\max{r}-\min{r})\int_0^T \frac{T-\tau}{T}d\tau=\frac{T}{2}(\max{r}-\min{r}).$$
 Such term being bounded implies $\lim_{t\rightarrow \infty} \ln{\frac{||X_*^t||}{S_t}}=0$ as needed. We can similarly show $q\prec X_*|_E$.
 \end{proof}
 
 Now we can define the following norm on $E$ and we will show that such norm has the desired properties for sufficiently large $T$ and small $\epsilon_T$. Let $v \in E$:
 $$||v||_T^2:= \int_{-\infty}^0 \frac{||X_*^t(v)||^2}{Q_t^2}dt+\int_0^{+\infty} \frac{||X_*^t(v)||^2}{S_t^2}dt.$$

The goal is to show that as $T\rightarrow \infty$ the expansion rate of the above norm converges to the {\em forward Lyapunov exponent} at each point and its derivatives along the flow approach $0$. First consider the following corollary from the proof of the above lemma.

\begin{corollary}
We have $ \epsilon_T  e^{-T(\max{r}-\min{r})} \leq B_T:=\frac{||v||^2}{||v||_T^2}\leq \epsilon_T e^{T(\max{r}-\min{r})}$ for any $v \in E$.
\end{corollary}

\begin{proof}
The computation of the lemma shows that $$e^{-2\epsilon_T t-T(\max{r}-\min{r})}\leq \frac{||X_*^t||^2}{S^2_t}\leq e^{-2\epsilon_T t+T(\max{r}-\min{r})}$$
$$\Rightarrow e^{-T(\max{r}-\min{r})} \int_0^\infty e^{-2\epsilon_T t}dt \leq \int_0^\infty \frac{||X_*^t||^2}{S^2_t} dt \leq e^{T(\max{r}-\min{r})} \int_0^\infty e^{-2\epsilon_T t}dt$$
$$\Rightarrow \frac{e^{-T(\max{r}-\min{r})}}{2\epsilon_T} \leq \int_0^\infty \frac{||X_*^t||^2}{S^2_t} dt \leq  \frac{e^{T(\max{r}-\min{r})}}{2\epsilon_T}.$$
doing the same computation for $\frac{||X_*^t||^2}{Q^2_t}$ and adding to the above gives the claim.
\end{proof}
 
Now note that $r_{T}(k):=\frac{1}{2}\partial_k\cdot\ln{|| X_*^k(v) ||^2_T}$ for $v\in E$ is the expansion rate with respect to the norm $||.||_T$ and compute
 $$|| X_*^k(v) ||^2_T=Q_k^2 \int_{-\infty}^k  \frac{||X_*^t(v)||^2}{Q_t^2}dt+S_k^2 \int_{k}^\infty  \frac{||X_*^t(v)||^2}{S_t^2}dt$$
 $$\Rightarrow \partial_k \cdot || X_*^k(v) ||^2_T=\partial_k\cdot (Q_k^2) \int_{-\infty}^k  \frac{||X_*^t(v)||^2}{Q_t^2}dt+|| X_*^k(v) ||^2_T+\partial_k\cdot(S_k^2) \int_{k}^\infty  \frac{||X_*^t(v)||^2}{S_t^2}dt-|| X_*^k(v) ||^2_T$$
 $$=\partial_k\cdot (Q_k^2) \int_{-\infty}^k  \frac{||X_*^t(v)||^2}{Q_t^2}dt+\partial_k\cdot(S_k^2) \int_{k}^\infty  \frac{||X_*^t(v)||^2}{S_t^2}dt.$$
 
 Also, we have
 $$\partial_k \cdot Q^2_k=2\{ -\epsilon_T+\frac{1}{T}\int_0^Tr(k+\tau)d\tau\}Q^2_k$$
 and
$$\partial_k \cdot S^2_k=2\{ +\epsilon_T+\frac{1}{T}\int_0^Tr(k+\tau)d\tau\}S^2_k,$$
implying $$ \partial_k \cdot || X_*^k(v) ||^2_T \big|_{k=0}=\big[2\epsilon_T A_T+ \frac{2}{T}\int_0^Tr(\tau)d\tau\big] ||v||^2_T,$$
where $A_T:=\frac{\int_{0}^\infty  \frac{||X_*^t(v)||^2}{S_t^2}dt - \int_{-\infty}^0  \frac{||X_*^t(v)||^2}{Q_t^2}dt}{||v||_T^2}$ and in particular, $|A_T|\leq 1$. This means
\begin{equation}r_{T}=\epsilon_T A_T+ \frac{1}{T}\int_0^Tr(\tau)d\tau. \end{equation}
Note
$$X\cdot  r_{T} (k)= \partial_k \cdot r_{T}(k)=\frac{1}{2}\partial_{kk}^2\cdot \ln{||X_*^k(v)||_T^2}=\frac{1}{2} \partial_k\cdot \frac{\partial_k \cdot ||X_*^k(v)||_T^2}{||X_*^k(v)||_T^2}$$ 
$$=\frac{(\partial_{kk}^2 \cdot ||X_*^k(v)||_T^2)||X_*^k(v)||_T^2-(\partial_k \cdot ||X_*^k(v)||_T^2)^2}{2||X_*^k(v)||_T^4}.$$

So, we need to compute 
$$\partial_{kk}^2 \cdot ||X_*^k(v)||_T^2=\partial^2_{kk}\cdot (Q_k^2) \int_{-\infty}^k  \frac{||X_*^t(v)||^2}{Q_t^2}dt+\partial^2_{kk}\cdot(S_k^2) \int_{k}^\infty  \frac{||X_*^t(v)||^2}{S_t^2}dt-4\epsilon_T||X_*^k(v)||^2,$$
where we have used the fact that $\partial_k\cdot \ln{(\frac{Q^2_k}{S^2_k})}=\partial_k\cdot (-4\epsilon_Tk)=-4\epsilon_T$. Also, we have 

$$\partial_{kk} \cdot Q^2_k=\big[4( -\epsilon_T+\frac{1}{T}\int_0^Tr(k+\tau)d\tau)^2 +2\frac{r(k+T)-r(k)}{T} \big]Q^2_k$$
$$\partial_{kk} \cdot S^2_k=\big[4( +\epsilon_T+\frac{1}{T}\int_0^Tr(k+\tau)d\tau)^2 +2\frac{r(k+T)-r(k)}{T} \big]S^2_k,$$
which implies 
 $$\partial_{kk}^2 \cdot ||X_*^k(v)||_T^2\big|_{k=0}=\big[ 4\epsilon_T^2+4(\frac{1}{T}\int_0^Tr(\tau)d\tau)^2 +2\frac{r(T)-r}{T}\big]||v||_T^2 + \big[ \frac{8\epsilon_T}{T} \int_0^Tr(\tau)d\tau\big]A_T||v||^2_T-4\epsilon_T||v||^2.$$
 
 Therefore, we have
 $$2\partial_k \cdot r_{T}(k) \big|_{k=0} =\big[ 4\epsilon_T^2+4(\frac{1}{T}\int_0^Tr(\tau)d\tau)^2 +2\frac{r(T)-r}{T}\big] $$
 $$+ \big[ \frac{8\epsilon_T}{T} \int_0^Tr(\tau)d\tau\big]A_T-4\epsilon_TB_T -\big[ 2\epsilon_T A_T+\frac{2}{T}\int_0^T r(\tau)d\tau \big]^2,$$
 implying
 \begin{equation} X\cdot r_{T}=2\epsilon_T^2+\frac{r(T)-r}{T}-2\epsilon_T B_T -2\epsilon_T^2 A^2_T\  .\end{equation}
 
 Now, considering the Corollary above, if we let $\epsilon_T:=e^{-T(\max{r}-\min{r})}$, we'll have $B_T\leq 1$ and $\epsilon_T\rightarrow 0$ as $T\rightarrow 0$ (note that in the case $(\max{r}-\min{r})=0$, there is nothing to prove. So can assume $(\max{r}-\min{r})>0$). Equations (1) and (2) gives that 
 $$\lim_{T\rightarrow \infty} r_{T} =\lim_{T\rightarrow \infty}\frac{1}{T}\int_0^T r(\tau)d\tau $$
 and
 $$\lim_{T\rightarrow \infty} X\cdot r_{T}=0. $$
 
 Note that the first vanishing limit implies that whenever the Lyapunov exponent exists, it is equal to $\{\lim_{T\rightarrow \infty} r_{T}\}$.
 
Thanks to the exponential nature of the convergence, this argument can be inductively be applied to show that the higher derivatives $X^k\cdot r_u$ for $ k\in \mathbb{N}$ also uniformly vanish. More specifically, we have shown
$$r_{T}-\frac{1}{T}[\int_0^T r(\tau)d\tau ]\mbox{\Large$\overset{\text{exp.}}{\longrightarrow}$} 0$$
and 
$$X\cdot r_{T}-\frac{1}{T}[r(T)-r_u(0)] \mbox{\Large$\overset{\text{exp.}}{\longrightarrow}$} 0.$$
Thanks to the exponential convergence in the above, this can in fact be further improved to
$$X^{(j)}\cdot r_{T}-\frac{1}{T}[X^{(j-1)}\cdot r(T)-X^{(j-1)}\cdot r(0)] \mbox{\Large$\overset{\text{exp.}}{\longrightarrow}$} 0$$
for $1\leq j<N$, which then yields the uniform convergence
$$X^{(j)}\cdot r_{T}  \mbox{\Large$\overset{\text{unif.}}{\longrightarrow}$} 0.$$

  \end{proof}
  
 \begin{remark}\label{perexp}
 In particular, Theorem~\ref{expunif} can be applied to the strong invariant bundles $E^s$ or $E^u$ of an Anosov 3-flow $X^t$ and it is useful to remark a few implications in this case.

We note that if $\gamma$ is a periodic orbit for $X^t$ with period $T$, the (forward and backward) Lyapunov exponents of points on $\gamma$ are always defined and determined by the eigenvalues of the associated return map. More specifically, such eigenvalues $\lambda_u$ and $\lambda_s$ corresponding to the eigenspaces $E^u$ and $E^s$, respectively, satisfy $|\lambda_u|=e^{\int_0^T r_u(\tau)\ d\tau}$ and $|\lambda_s|=e^{\int_0^T r_s(\tau)\ d\tau}$ (see Proposition~\ref{exppro}). Therefore, the associated Lyapunov exponents are $\bar{r}_u=\frac{\ln{|\lambda_u|}}{T}$ and $\bar{r}_s=\frac{\ln{|\lambda_s|}}{T}$, respectively, and in particular, the expansion rates $r_{u,T}$ and $r_{s,T}$ of the deformations given by Theorem~\ref{expunif} limit to these values on $\gamma$ when $T\rightarrow \infty$.
 
 If $X^t$ is furthermore transitive, the Lyapunov exponents are constant on a set of full measure, and therefore, the limits $\lim r_{u,T}$ and $\lim r_{s,T}$ exist, and equal the constants $h$ and $-h$, respectively, almost everywhere. The quantity $h$ is independent of the norm and in fact measures the {\em Liouville entropy} of the flow (see \cite{hoz5}).
 \end{remark}

 %\begin{remark}\label{finish}
%We notice that in the proof of Theorem~\ref{main}, it is enough to have a norm with
%$$1+r_u+X\cdot \ln{r_u}>0$$
%(using a parametrization with $r_s\equiv -1$) and Theorem~\ref{expunif} above provides a deformation of a given adapted norm such that
%$$[1+r_{u,T}+X\cdot \ln{r_{u,T}}] -[ 1+\frac{1}{T}\int^T_0 r_u(\tau) d\tau] \mbox{\Large$\overset{\text{unif.}}{\longrightarrow}$} 0,$$ as $T\rightarrow \infty$ and hence, we have $\alpha_{u,T}$ with the desired properties for sufficiently large $T$, completing the proof of Theorem~\ref{main}.
%\end{remark}

We note that Theorem~\ref{expunif} completes the proof of our main Theorem~\ref{main}. However, Theorem~\ref{expunif} in fact provides a much stronger convergence result and in particular, motivates the following definition (see Theorem~\ref{mains} and the discussion at the end of Section~\ref{s3}). 

\begin{definitionp}\label{snorm}
Using the above notation, a norm on the expanding or contracting vector bundle $E\rightarrow M$ is called {\em strongly adapted}, if its derivative along the flow is also an adapted norm on $E\rightarrow M$. This is equivalent to its expansion $r$ rate satisfying $r^2+X\cdot |r|> 0$.
\end{definitionp}

We have the following corollary.
\begin{corollary}\label{strnormex}
 Let $X^t$ and $E\rightarrow M$ be as in Theorem~\ref{expunif}. Then, $E$ admits a strongly adapted norm.
\end{corollary}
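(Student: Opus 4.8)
The plan is to produce the required strongly adapted norm directly as one of the deformed norms $||\cdot||_T$ furnished by Theorem~\ref{expunif}, for $T$ taken sufficiently large. Throughout I treat the case where $E$ is an expanding bundle; the contracting case is identical after reversing signs. Since $E$ is a line bundle, the growth factor is independent of the chosen vector: $||X_*^T(v)||/||v|| = \exp\int_0^T r_0(\tau)\,d\tau$ by Proposition~\ref{exppro}(5). Uniform expansion gives constants $A,C>0$ with this factor bounded below by $Ae^{CT}$, so $\tfrac1T\int_0^T r_0(\tau)\,d\tau \geq \tfrac{\ln A}{T} + C$. Hence the time averages are bounded below by $C/2$, uniformly on $M$, for all large $T$.

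Next I would read off the two conclusions of Theorem~\ref{expunif}, applied with $j=1$; this uses only that the induced action is twice differentiable along the flow, i.e. $N\geq 2$. The first conclusion, $r_T - \tfrac1T\int_0^T r_0(\tau)\,d\tau \to 0$ uniformly, combines with the previous estimate to give $r_T \geq C/4 > 0$ uniformly on $M$ once $T$ is large. In particular $r_T$ keeps the sign imposed by the expansion of $E$, so $||\cdot||_T$ is already an adapted norm, and $|r_T| = r_T$ is a $C^1$ (indeed $N$-times differentiable) function. The second conclusion, upon substituting $X^{(0)}\cdot r_0 = r_0$ and using that $|r_0(T)-r_0|\leq \max r_0 - \min r_0$ stays bounded, yields $X\cdot r_T \to 0$ uniformly on $M$.

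It then remains only to verify the strong adaptation inequality of Definition/Proposition~\ref{snorm}. Since $r_T$ has constant sign we have $\bigl|X\cdot|r_T|\bigr| = |X\cdot r_T| \to 0$ uniformly, while $r_T^2 \geq C^2/16$ uniformly. Therefore $r_T^2 + X\cdot|r_T| \geq C^2/16 - \sup_M|X\cdot r_T| > 0$ for all $T$ large enough, so $||\cdot||_T$ is strongly adapted for such $T$, which proves the corollary. (Note the argument is insensitive to the precise placement of the sign in the inequality, since the first-order term is driven to zero while $r_T^2$ stays bounded away from zero.)

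The one substantive input is the uniform lower bound $|r_T|\geq C/4$, which is exactly where the uniform hyperbolicity of $E$ enters; without it the averages $\tfrac1T\int_0^T r_0\,d\tau$ could approach zero and the quadratic term $r_T^2$ would no longer dominate the vanishing derivative term. Everything else is bookkeeping on top of the uniform—indeed exponential—convergence already delivered by Theorem~\ref{expunif}, with the only regularity requirement being that the flow action on $E$ be $C^2$ along the flow so that the $j=1$ derivative estimate is available.
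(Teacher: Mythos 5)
Your proposal is correct and is essentially the argument the paper intends: the corollary is stated as an immediate consequence of Theorem~\ref{expunif}, precisely because for an expanding (or contracting) bundle the time averages $\tfrac1T\int_0^T r_0(\tau)\,d\tau$ are uniformly bounded away from zero while the deformed rates satisfy $r_T-\tfrac1T\int_0^T r_0(\tau)\,d\tau\to 0$ and $X\cdot r_T\to 0$ uniformly, so $r_T^2+X\cdot|r_T|>0$ for $T$ large. Your explicit bookkeeping with the constants $A,C$ and the $j=1$ derivative estimate (requiring $N\geq 2$) just spells out this same mechanism.
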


Similar to adapted contact forms (see Remark~\ref{strongfilter}), this provides a filtration of the space of adapted norms, given a flow action as in Theorem~\ref{expunif}, and our result indicates that all these sub classes of norms on $E$ are non-empty. This is a refinement of the results by Hirch-Pugh-Shub \cite{invariant} and Gourmelon \cite{gour} in this setting, by extending control over the variations of an expanding or contracting norm along the flow.

%%%%%%%%%%%%%%%%%%%%%%

\section{The space of adaptations}\label{s5}

The goal of this section is to investigate the space of the geometric structures resulting from our characterization of Anosov flows, i.e. {\em strongly adaptations}, as we will soon define precisely. One of our chief purposes is to show that the space of such geometric models is homotopy equivalent to the space of Anosov flows and hence, the deformation invariants of these geometric structures can be used to study the space of Anosov flows. For our purposes in this paper, we consider Anosov 3-flows up to unoriented reparametrization, as the underlying contact geometry is unaffected under such operation. In terms of the generating vector field, this can be viewed as (recall that vector fields and other geometric objects are assumed $C^\infty$ in this paper, unless stated otherwise)
$${\mathcal{A}}(M):=\frac{\{ X\in\chi (M) |X \text{ is Anosov } \}\  }{X\sim fX \text{ for } f:M\rightarrow \mathbb{R}/\{0\}},$$
where $\chi (M)$ is the space of ($C^\infty$) vector fields in $M$. We denote the equivalence class of an Anosov vector field by $\langle X \rangle\in\chi (M)$. Also note that $\mathcal{A} (M) \subset \chi(M) / \sim$ is an open subset.

We want to study the space of strong adaptations. By this, we mean the study of the space $$\mathcal{SA}^+(M):=\bigg\{ (\alpha_+,\langle X \rangle)| \alpha_+ \text{ strongly adapted to }\langle X \rangle\in {\mathcal{A}}(M) \bigg\}/ (\mathbb{R}\backslash\{0\},.),$$
where we identify contact forms, up to (possibly negative) constant multiplication.
Of course, one might want to study the strongly adapted contact geometry from the viewpoint of the underlying Reeb vector field or bi-contact structures (unlike the above definition in terms of contact forms), as there are other formulations of strong adaptations (see Proposition~\ref{strad}). We will soon see (Corollary~\ref{saspace}) that the space $\mathcal{SA}^+(M)$ can also be defined in terms of those geometric objects.% In fact in the above definition, we ignore the orientation of the contact form (by taking the quotient $/(\mathbb{R}\backslash\{0\},.)$) due to the fact that the bi-contact geometric characterization is naturally unoriented and such orientation is in fact irrelevant. 

We need to define the following spaces for what follows. %Let $X^t$ be an Anosov 3-flow.
\begin{itemize}
%\item $\mathcal{A}_{proj}(M):=$ the space of projectively Anosov vector fields, up to unoriented reparametrization. Note that $\mathcal{A}(M)\subset \mathcal{A}_{proj}(M)$ is an open subset.
\item $\mathcal{C}^b(M)$:= the space of bi-contact structures supporting some Anosov flow. 
Note that there is a natural projection 
$\begin{cases}
\mathcal{C}^b(M)\rightarrow\mathcal{A}(M) \\
(\xi_-,\xi_+)\mapsto \xi_-\cap\xi_+
\end{cases}$.
\item $\mathcal{C}^\pm(M):=\big\{ (\xi_\pm,\langle X \rangle)| \xi_\pm \text{ a positive/negative contact structure supporting } \langle X \rangle\in\mathcal{A}(M)\big\}$. Recall that a negative or positive contact structure supporting a Anosov flow $X^t$ is one appearing in some supporting bi-contact structure (see Definition~\ref{suppdef}). Therefore, there are natural projections from $\mathcal{C}^b(M)$ onto $\mathcal{C}^+(M)$ and $\mathcal{C}^-(M)$, as well as projections from $\mathcal{C}^+(M)$ and $\mathcal{C}^-(M)$ onto $\mathcal{A}(M)$, extracting the supported vector field (up to unoriented reparametrization).
%\item $\mathcal{F}^\pm\langle X \rangle:=$ the space of positive/negative contact forms supporting $X$.

\end{itemize}

 We start with the following observation, which indicates the subtlety in understanding the space of strongly adapted contact forms.

\begin{remark}\label{phen}
In general, we can't fix an arbitrary supporting bi-contact structure $(\xi_-,\xi_+)$ and expect to find a Reeb vector field $R_{+}$ (of $\xi_+$) such that $R_{+}\subset \xi_-$. To see this, choose a strongly adapted $\alpha_+$ and let $\xi_-:=\langle R_{+},X\rangle$. Then by Theorem~\ref{vol}, there is no Reeb vector field $R_{-}$ of $\xi_-$ with $R_{-}\subset \xi_+$, unless $X$ is incompressible. This in fact provides the bi-contact geometric characterization of incompressibility in Anosov flows \cite{hoz4}.
\end{remark}

In a series of lemmas, we observe various properties of the space of strongly adapted contact geometry $\mathcal{SA^+}(M)$ and in particular, observe some of its symmetries. These are mostly based on Gray's theorem and an application of the {\em Moser's technique}, which allow us to deform supporting contact structures via isotopies along the flow.

It is important to notice that the space of smooth functions $f:M\rightarrow \mathbb{R}$ has a natural smooth action on $\mathcal{SA^+}(M)$.

\begin{lemma}
If $\alpha_+$ is strongly adapted, so is $\alpha_{+,f}:=X^{f*}\alpha_+$, for any $f:M\rightarrow \mathbb{R}$.
\end{lemma}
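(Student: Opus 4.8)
The plan is to exploit the naturality of the Lie derivative under the variable-time flow diffeomorphism $\Phi_f:=X^f$ (the map $p\mapsto X^{f(p)}(p)$), together with the reparametrization invariance of strong adaptation, and then read off the conclusion from the contact-form characterization, i.e. condition (2) of Theorem~\ref{A} (equivalently Proposition~\ref{strad}(3)).

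First I would record the two elementary properties of $\Phi_f$ that drive the whole argument. Differentiating $s\mapsto \Phi_f(X^s(p))=X^{\,s+f(X^s(p))}(p)$ at $s=0$ gives $d\Phi_f(X_p)=(1+X\cdot f)(p)\,X_{\Phi_f(p)}$. Assuming $f$ admissible, i.e. $1+X\cdot f>0$ (which is precisely the condition making $\Phi_f$ an orientation-preserving diffeomorphism, since then $s\mapsto\Phi_{sf}$ is an isotopy to the identity), this shows that $\Phi_f$ carries $X$ to a positive multiple of itself, so that $(\Phi_f^{-1})_*X=hX$ for some positive function $h$. The same formula gives $\alpha_{+,f}(X)=(\Phi_f^*\alpha_+)(X)=(1+X\cdot f)\,(\alpha_+(X)\circ\Phi_f)=0$, so $\alpha_{+,f}$ stays Legendrian for $X$.

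Next I would transport the defining contact condition. By naturality of the Lie derivative,
\[
\Phi_f^*\mathcal{L}_X\alpha_+=\mathcal{L}_{(\Phi_f^{-1})_*X}\,\Phi_f^*\alpha_+=\mathcal{L}_{hX}\alpha_{+,f}.
\]
Because $\iota_X\alpha_{+,f}=0$, the Leibniz identity $\mathcal{L}_{hX}\beta=h\,\mathcal{L}_X\beta+dh\wedge\iota_X\beta$ collapses to $\mathcal{L}_{hX}\alpha_{+,f}=h\,\mathcal{L}_X\alpha_{+,f}$, whence $\mathcal{L}_X\alpha_{+,f}=\tfrac1h\,\Phi_f^*(\mathcal{L}_X\alpha_+)$. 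Since $\alpha_+$ is strongly adapted, $\mathcal{L}_X\alpha_+$ is a negative contact form (Proposition~\ref{strad}(3)); pulling back by the orientation-preserving diffeomorphism $\Phi_f$ and scaling by the positive function $1/h$ preserves both the contact condition and its negative sign. Likewise $\alpha_{+,f}=\Phi_f^*\alpha_+$ is a positive contact form. Thus $\alpha_{+,f}$ is a positive contact form with $\alpha_{+,f}(X)=0$ and $\mathcal{L}_X\alpha_{+,f}$ a negative contact form, which is exactly condition (2) of Theorem~\ref{A}; hence $\alpha_{+,f}$ is strongly adapted.

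The only genuine subtlety — and the step I would treat with care — is the admissibility issue: the symbol $X^{f*}$ is meaningful only when $\Phi_f$ is a diffeomorphism, which requires $1+X\cdot f>0$. This same inequality is what guarantees that $\Phi_f$ is orientation preserving and that the reparametrizing factor $h$ is positive, both of which are used crucially to keep the sign of $\mathcal{L}_X\alpha_{+,f}$ negative; for general $f$ one restricts to the open set of such admissible functions. Everything else in the argument is formal.
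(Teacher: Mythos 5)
Your proof is correct, and it rests on the same underlying mechanism as the paper's one-line argument: transport everything by the variable-time flow map $\Phi_f=X^f$ and use that $d\Phi_f$ preserves the flow direction up to a positive factor. The difference is which equivalent formulation of strong adaptation gets transported. The paper works with the Reeb-dynamical formulation: by the transformation law for Reeb fields under pullback, $R_{\alpha_{+,f}}=(X^f)^{-1}_{*}R_{\alpha_+}$, hence $\langle X,R_{\alpha_{+,f}}\rangle=(X^f)^{-1}_{*}\langle X,R_{\alpha_+}\rangle$ is the image of a negative contact structure under an orientation-preserving diffeomorphism, and the definition of strong adaptation applies directly. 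You instead transport $\mathcal{L}_X\alpha_+$ via naturality of the Lie derivative, obtaining $\mathcal{L}_X\alpha_{+,f}=\tfrac{1}{h}\Phi_f^*\mathcal{L}_X\alpha_+$, and conclude through Theorem~\ref{A}(2)/Proposition~\ref{strad}(3); the two routes are interchangeable because $\ker{\mathcal{L}_X\alpha_+}=\langle X,R_{\alpha_+}\rangle$. Your version is longer but self-contained (no Reeb transformation law needed), and it buys one genuine improvement: the explicit admissibility condition $1+X\cdot f>0$, which is exactly what makes $X^f$ a diffeomorphism and without which $\alpha_{+,f}$ is not even a contact form (where $1+X\cdot f=0$, $dX^f$ kills $X$, so $\alpha_{+,f}\wedge d\alpha_{+,f}$ vanishes); the paper's ``for any $f$'' leaves this implicit. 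Your care here also sidesteps a small looseness in the paper's formula $R_{\alpha_{+,f}}=X^{-f}_*R_{\alpha_+}$: the inverse of $X^f$ is not $X^{-f}$ but rather $X^{-g}$ with $g=f\circ(X^f)^{-1}$ --- harmless for the argument, since any such map still preserves $\langle X\rangle$, but worth noting.
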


\begin{proof}
That is since the Reeb vector field for $\alpha_{+,f}$ satisfies $R_{\alpha_{+,f}}=X^{-f}_* R_{\alpha_+}$ and therefore,
$$\xi_{-,f}:=\langle R_{\alpha_{+,f}},X\rangle=\langle X_*^{-f} R_{\alpha_+},X\rangle=X_*^{-f}\xi_-,$$
where $\xi_-=\langle R_{\alpha_+},X\rangle$.
\end{proof}

This allows us to show that any supporting $\xi_+$ or $\xi_-$ for an Anosov flow can be part of a strongly adapted supporting bi-contact structures, which will be used later to show that the strong adaptations can be fibered over the space of positive and negative supporting contact structures.

\begin{lemma}\label{grayf}
For any supporting $\xi_+$, there exists a strongly adapted $\alpha_+$ with $\ker{\alpha_+}=\xi_+$.
\end{lemma}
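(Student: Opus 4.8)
The plan is to transport a known strongly adapted contact form onto the prescribed kernel $\xi_+$ by an isotopy that moves points \emph{only along the flow lines of} $X$, so that the preceding lemma (invariance of strong adaptation under the flow-pullback $\alpha_+\mapsto X^{f*}\alpha_+$) applies verbatim. First I would invoke Theorem~\ref{main} to produce a strongly adapted contact form $\beta$ for $X$, with supporting kernel $\eta:=\ker\beta$. Since both $\eta$ and the target $\xi_+$ are positive contact structures supporting $X$, and the space of such structures is contractible by \cite{hoz3}, I would choose a path $\xi_+^t$, $t\in[0,1]$, of supporting positive contact structures from $\eta$ to $\xi_+$, together with a smooth family of defining contact forms $\alpha_t$ (so $\ker\alpha_t=\xi_+^t$ and $\alpha_0=\beta$) with $\alpha_t(X)=0$ for all $t$ --- a condition forced anyway since each $\xi_+^t$ contains $X$.

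The core is a flow-directed Moser argument. I would seek an isotopy $\psi_t$ generated by a vector field of the special form $Y_t=h_tX$ and satisfying $\psi_t^*\alpha_t=\lambda_t\beta$. Differentiating in $t$ and using $\iota_{Y_t}\alpha_t=0$, $\iota_X\alpha_t=0$, this reduces to the Gray--Moser equation $h_t\,\mathcal{L}_X\alpha_t=\mu_t\alpha_t-\dot\alpha_t$ for unknown functions $h_t,\mu_t$. The point is that this equation is solvable precisely because we restrict to $Y_t\parallel X$: since $\alpha_t(X)\equiv 0$ we also have $\mathcal{L}_X\alpha_t(X)=0$ and $\dot\alpha_t(X)=0$, so all three $1$-forms annihilate $X$; moreover $\alpha_t$ and $\mathcal{L}_X\alpha_t$ are independent (their kernels are the transverse planes $\xi_+^t$ and $\ker\mathcal{L}_X\alpha_t$, which differ whenever $r_u>r_s$, i.e.\ along any supporting path). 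Thus $\{\alpha_t,\mathcal{L}_X\alpha_t\}$ is a pointwise basis for the $2$-plane of $1$-forms killing $X$; writing $\dot\alpha_t=p_t\alpha_t+q_t\,\mathcal{L}_X\alpha_t$ in this basis, I would simply set $h_t:=-q_t$ and $\mu_t:=p_t$, which solves the equation. Integrating $Y_t$ on the closed manifold $M$ yields an isotopy $\psi_t$ that carries each point along its own $X$-orbit; hence $\psi_1=X^{F}$ for some function $F$, and by construction $(\psi_1)_*\eta=\xi_+$.

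Finally I would set $\alpha_+:=(\psi_1)_*\beta=X^{(-F)*}\beta$. Its kernel is $(\psi_1)_*\eta=\xi_+$, and since $\beta$ is strongly adapted and $\psi_1$ is a flow-pullback of $X$, the preceding lemma gives that $\alpha_+$ is strongly adapted. This produces the desired strongly adapted contact form with $\ker\alpha_+=\xi_+$.

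The step I expect to be the main obstacle --- and essentially the only place where something beyond bookkeeping happens --- is the solvability of the flow-directed Moser equation with the generator $Y_t$ constrained to be a multiple of $X$. In an unconstrained Moser argument the generator is allowed to roam in the contact structure and solvability is automatic; here the constraint $Y_t\parallel X$ makes the system overdetermined ($3$ scalar equations for $2$ unknown functions), and it is only the identity $\alpha_t(X)\equiv 0$, which forces $\dot\alpha_t$ into the span of $\alpha_t$ and $\mathcal{L}_X\alpha_t$, that rescues it. I would also record the two mild points that such a path $\xi_+^t$ of supporting structures exists (connectivity of the space of supporting positive contact structures) and that $\alpha_t,\mathcal{L}_X\alpha_t$ remain independent along it, both of which follow from the inequality $r_u>r_s$ characterizing supporting forms.
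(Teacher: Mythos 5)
Your proposal is correct and follows essentially the same route as the paper: produce a strongly adapted form, connect its kernel to $\xi_+$ through supporting positive contact structures (contractibility from \cite{hoz3}), realize that path by an isotopy generated by a multiple of $X$, and conclude via the preceding lemma on invariance of strong adaptation under flow-pullbacks $X^{f*}$. The only difference is that you carry out the flow-directed Gray--Moser step explicitly, whereas the paper cites it as the standard Gray-type argument (Theorem~4 of \cite{massot}).
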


\begin{proof}
Let $\bar{\alpha}_+$ be an arbitrary strongly adapted contact form. Then, there exists a function $f:M\rightarrow \mathbb{R}$ such that $\xi_+=X_*^{-f}\bar{\xi}_+$, where $\bar{\xi}_+:=\ker{\bar{\alpha}_+}$. This is thanks to the fact that the two supporting positive contact structures can be homotoped through supporting positive contact structures, as discussed in Section~5 of \cite{hoz3} (see Theorem~1.5 in that paper), as well as the standard use of {\em Gray's theorem} in contact geometry, which guarantees such homotopy of contact structures can be achieved by an isotopy generated by an appropriate scaling of the vector field in their intersections, i.e. $X$ (see Theorem~4 in \cite{massot}).
Thus, by the previous lemma, we have $\alpha:=X^{f*}\bar{\alpha}_+$ is strongly adapted and $\ker{\alpha_+}=\xi_+$.
\end{proof}

\begin{lemma}
For any supporting $\xi_-$, there exists a strongly adapted $\alpha_+$ with $R_{\alpha_+}\subset\xi_-$.
\end{lemma}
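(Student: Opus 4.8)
The plan is to mirror the proof of Lemma~\ref{grayf}, interchanging the roles of the positive contact structure $\ker{\alpha_+}$ and the negative contact structure $\langle X,R_{\alpha_+}\rangle$. Since $\langle X\rangle\in\mathcal{A}(M)$, Theorem~\ref{main} guarantees that $X$ admits at least one strongly adapted contact form $\bar{\alpha}_+$; set $\bar{\xi}_-:=\langle X,R_{\bar{\alpha}_+}\rangle$, which by definition of strong adaptation is a supporting negative contact structure. Both $\xi_-$ and $\bar{\xi}_-$ occur as the negative member of some supporting bi-contact structure for $X$, so in particular $X\subset\xi_-\cap\bar{\xi}_-$ is Legendrian for each.

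Next I would connect $\bar{\xi}_-$ to $\xi_-$ through supporting negative contact structures. This is exactly the input furnished by the contractibility (in particular, connectedness) of the space of supporting bi-contact structures, Theorem~1.5 of \cite{hoz3}, which produces such a path. Because $X$ is contained in every contact structure along this path, the same refinement of Gray's theorem used in Lemma~\ref{grayf}, namely [Massot, Theorem~4] \cite{massot}, upgrades the homotopy to an ambient isotopy generated by an appropriate scaling of $X$. Thus there exists a function $f:M\rightarrow\mathbb{R}$ with $\xi_-=X_*^{-f}\bar{\xi}_-$.

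Finally I would push $\bar{\alpha}_+$ forward along the flow. Setting $\alpha_+:=X^{f*}\bar{\alpha}_+$, the lemma preceding Lemma~\ref{grayf} shows $\alpha_+$ is again strongly adapted, with Reeb field $R_{\alpha_+}=X_*^{-f}R_{\bar{\alpha}_+}$. Since flowing along $X$ preserves the line field $\langle X\rangle$, so that $X_*^{-f}X\in\langle X\rangle$, we obtain
$$\langle X,R_{\alpha_+}\rangle=\langle X,X_*^{-f}R_{\bar{\alpha}_+}\rangle=X_*^{-f}\langle X,R_{\bar{\alpha}_+}\rangle=X_*^{-f}\bar{\xi}_-=\xi_-,$$
whence $R_{\alpha_+}\subset\xi_-$, as required.

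The main obstacle is the middle step: certifying that the two supporting negative contact structures can be joined by an isotopy moving points \emph{only} along the flow lines of $X$. This rests on the two quotable facts above, the contractibility of the space of supporting bi-contact structures from \cite{hoz3} and the version of Gray stability that realizes a homotopy of contact structures sharing a common Legendrian direction by a reparametrization along that direction. Once $f$ is produced, the remainder is the same bookkeeping as in Lemma~\ref{grayf}, using only that $X_*^{-f}$ preserves $\langle X\rangle$ and the explicit formula for the transformed Reeb field.
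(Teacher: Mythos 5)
Your proposal is correct and follows essentially the same route as the paper's own proof: choose a strongly adapted $\bar{\alpha}_+$, use the contractibility of supporting contact structures from \cite{hoz3} together with the Gray-type theorem of \cite{massot} (exactly as in Lemma~\ref{grayf}) to produce $f$ with $\xi_-=X_*^{-f}\bar{\xi}_-$, and then pull back via $\alpha_+:=X^{f*}\bar{\alpha}_+$, whose Reeb field satisfies $\langle R_{\alpha_+},X\rangle=X_*^{-f}\bar{\xi}_-=\xi_-$. Your write-up is merely more explicit than the paper about the two quoted inputs, which the paper compresses into the phrase ``as in the proof of Lemma~\ref{grayf}.''
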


\begin{proof}
Let $\bar{\alpha}_+$ be an arbitrary strongly adapted contact form and let $\bar{\xi}_-=\langle R_{\bar{\alpha}_+},X\rangle$. As in the proof of Lemma~\ref{grayf}, there exists $f:M\rightarrow \mathbb{R}$ such that $X^{-f}_*\bar{\xi}_-=\xi_-$. Let $\alpha_+:=X^{f*}\bar{\alpha}_+$ and note that
$$\langle R_{\alpha_+},X\rangle=\langle X^{-f}_*R_{\bar{\alpha}_+},X\rangle=X_*^{-f}\langle R_{\bar{\alpha}_+},X\rangle=X_*^{-f}\bar{\xi}_-=\xi_-.$$
\end{proof}

Moreover, we can show that a strongly adapted Reeb dynamics $R_{+}$ is determined, up to constant scaling, by fixing $(\xi_-,\xi_+)$ (in the case of existence, see Remark~\ref{phen}). In the following $\pi_+^{-1}\langle X\rangle$ is the space of positive contact structures supporting a fixed $X$.

\begin{lemma}\label{moserlemma}
Suppose $\alpha_+$ and $\bar{\alpha}_+$ be such that $\ker{\alpha_+}=\ker{\bar{\alpha}_+}\in \pi_+^{-1}\langle X\rangle$ and $\langle R_{\alpha_+},X\rangle=\langle R_{\bar{\alpha}_+},X\rangle$. Then, $\alpha_+=C\bar{\alpha}_+$ for some constant $C\in\mathbb{R}$.
\end{lemma}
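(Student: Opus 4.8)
The plan is to first reduce the statement to a claim about a single smooth invariant function, and then to invoke the hyperbolicity of $X^t$. Since $\ker\alpha_+=\ker\bar\alpha_+$ as co-oriented plane fields, there is a smooth nonvanishing (indeed positive) function $g:M\to\mathbb{R}$ with $\bar\alpha_+=g\,\alpha_+$, and the goal becomes showing that $g$ is constant. The key translation of the second hypothesis uses the identification $\langle R_{\alpha_+},X\rangle=\ker(\mathcal{L}_X\alpha_+)$, which follows from the Cartan computations recorded in the proof of Proposition~\ref{strad} (namely $\mathcal{L}_X\alpha_+(X)=\mathcal{L}_X\alpha_+(R_{\alpha_+})=0$, and $\mathcal{L}_X\alpha_+\neq 0$ since $\alpha_+\wedge\mathcal{L}_X\alpha_+=-\iota_X(\alpha_+\wedge d\alpha_+)$ is a nonvanishing $2$-form). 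The same identity applied to $\bar\alpha_+$ shows that the hypothesis $\langle R_{\alpha_+},X\rangle=\langle R_{\bar\alpha_+},X\rangle$ is precisely the statement $\ker(\mathcal{L}_X\bar\alpha_+)=\ker(\mathcal{L}_X\alpha_+)$.

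Next I would expand $\mathcal{L}_X\bar\alpha_+=(X\cdot g)\,\alpha_+ + g\,\mathcal{L}_X\alpha_+$. Two $1$-forms on a $3$-manifold with equal corank-one kernels are proportional by a nonvanishing function, so the kernel equality gives $\mathcal{L}_X\bar\alpha_+=\mu\,\mathcal{L}_X\alpha_+$ for some nonvanishing $\mu$, whence
$$(X\cdot g)\,\alpha_+=(\mu-g)\,\mathcal{L}_X\alpha_+.$$
Because $\alpha_+$ and $\mathcal{L}_X\alpha_+$ are pointwise linearly independent (again from $\alpha_+\wedge\mathcal{L}_X\alpha_+\neq 0$), both sides must vanish, yielding $X\cdot g\equiv 0$ (and $\mu=g$). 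Thus $g$ is a smooth first integral of the Anosov flow $X^t$.

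The main obstacle is then the final step: upgrading flow-invariance to global constancy, which is where Anosovity genuinely enters (note $X^t$ need not be transitive, so a density-of-orbits argument is unavailable). I would argue by contradiction. Since $X\cdot g\equiv 0$ we have $\mathcal{L}_X(dg)=d(X\cdot g)=0$, so $dg$ is a flow-invariant $1$-form and the open set $U:=\{dg\neq 0\}$ is $X^t$-invariant. On $U$, $\ker dg$ is an invariant plane field containing $X$, hence descends to an invariant line field in the hyperbolic splitting $TM/\langle X\rangle\simeq E^s\oplus E^u$; by uniqueness of invariant line fields in a hyperbolic splitting this line field is either $E^s$ or $E^u$, so $\ker dg$ equals $E^{ws}$ or $E^{wu}$ throughout $U$. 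In the first case $dg$ is a nonvanishing defining form for $E^{ws}$ with $\mathcal{L}_X dg=r_u\,dg$, forcing $r_u\equiv 0$ on $U$; but then Proposition~\ref{exppro}(5) gives $\|X^t_*e_u\|/\|e_u\|=\exp\int_0^t r_u(\tau)d\tau\equiv 1$ along forward orbits in $U$, contradicting the exponential expansion of $E^u$. The case $\ker dg=E^{wu}$ is symmetric, contradicting contraction of $E^s$. Hence $U=\varnothing$, so $dg\equiv 0$ and $g$ is constant on the connected manifold $M$, giving $\alpha_+=C\,\bar\alpha_+$ with $C=1/g\in\mathbb{R}\setminus\{0\}$.
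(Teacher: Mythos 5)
Your reduction of the lemma to the claim that $g$ is a first integral is correct, and it is essentially the paper's own reduction in different clothing: the paper writes $\bar{\alpha}_+=e^h\alpha_+$, expands $R_{\bar{\alpha}_+}=e^{-h}R_{\alpha_+}+X_h$ via the Moser-type equation and gets $X\cdot h=0$ from $X_h\parallel X$, while you identify $\langle R_{\alpha_+},X\rangle=\ker(\mathcal{L}_X\alpha_+)$ and use pointwise independence of $\alpha_+$ and $\mathcal{L}_X\alpha_+$; both computations are sound. The genuine gap is in your final step. The assertion that an invariant line field inside $E^s\oplus E^u$ over the open invariant set $U=\{dg\neq 0\}$ must equal $E^s$ or $E^u$ ``by uniqueness of invariant line fields in a hyperbolic splitting'' is not a theorem, and it is false at this level of generality. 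All proofs of such uniqueness statements use recurrence (dense periodic orbits, nonwandering points) or at least compactness of the invariant set; on an open invariant set made of wandering orbits --- exactly the situation you must allow, since you correctly note the flow need not be transitive --- invariant line fields are wildly non-unique: take a small cross-section $\Sigma\subset U$ whose orbits never return to it (such transversals exist through any wandering point), choose any smooth line field in $E^s\oplus E^u|_\Sigma$, say the span of $e_s+e_u$, and push it around by the flow; the result is smooth, invariant, and neither $E^s$ nor $E^u$. There is also a secondary problem downstream: even granting $\ker dg=E^{ws}$ on $U$, the step ``$r_u\equiv 0$ on $U$ contradicts exponential expansion'' compares the flow-invariant norm $|dg(\cdot)|$ on $E^u|_U$ with an adapted norm, but these need not be uniformly comparable because $U$ is non-compact and $dg$ vanishes on $\partial U$, so invariance of one norm does not immediately contradict exponential growth in the other.

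Both defects are fixable, but by genuinely different arguments. The paper's own fix is soft: if $h$ is a non-constant first integral, Sard's theorem gives a regular value $c$ in the interior of its image, and $h^{-1}(c)$ is then a closed surface invariant under the flow (tangency is exactly $X\cdot h=0$), which is impossible for an Anosov flow --- a classical fact. Alternatively, your idea of working with the invariant $1$-form $dg$ can be repaired using compactness of $M$ in place of the line-field claim: $\|dg\|$ is globally bounded, and invariance gives $dg_p(v)=dg_{X^t(p)}(X^t_*v)$ for all $v$; for $v\in E^s_p$ let $t\to+\infty$, so the right side tends to $0$ since $\|X^t_*v\|\to 0$ exponentially, whence $dg(E^s)\equiv 0$, and symmetrically $dg(E^u)\equiv 0$ letting $t\to-\infty$; together with $dg(X)=X\cdot g=0$ this forces $dg\equiv 0$ on all of $M$, i.e. $g$ is constant. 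Either repair completes your proof, but as written the argument has a real gap at its crucial step.
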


\begin{proof}
Let $(\xi_-,\xi_+)=(\langle R_{\alpha_+},X\rangle,\ker{\alpha_+})$.
Write $\bar{\alpha}_+=e^h\alpha_+$ for some $h:M\rightarrow \mathbb{R}$. A simple computation shows that $R_{\bar{\alpha}_+}=e^{-h}R_{\alpha_+}+X_h$ for a unique $X_h \subset \xi_+$ satisfying
\begin{equation}\label{moser}
\iota_{X_h}d\alpha_+ |_{\xi_+}=e^{-h}dh|_{\xi_+} \Longleftrightarrow \iota_{X_h}d\bar{\alpha}_+ |_{\xi_+}=dh|_{\xi_+}.
\end{equation}
So if $R_{\bar{\alpha}_+}\subset \xi_-$, this implies $X_h\subset \xi_-\cap \xi_+$ and hence, $X_h||X$. Therefore, we have 
$$0=d\alpha_+ (X_h,X)=X\cdot h.$$
This is enough to conclude that $h$ is in fact constant. To see this, suppose it is not. Then, there is some $c\in\mathbb{R}$ which is a regular value for $h$ and in particular, $h^{-1}(c)$ is a compact surface. The fact that $X\cdot h=0$, then implies that such compact surface is invariant under $X$. But Anosov flows do not admit any invariant closed surface.
\end{proof}

An important consequence of our computations is the following corollary, giving alternative formulations of the space of strong adaptations.

\begin{corollary}\label{saspace}
We have
$$\mathcal{SA}^+(M)\simeq\bigg\{(R_+,\langle X \rangle)| R_+ \text{ is a Reeb vector field strongly adapted to }\langle X \rangle\in\mathcal{A}(M) \bigg\}/(\mathbb{R}\backslash\{0\},.)$$
$$ \simeq \bigg\{((\xi_-,\xi_+)| (\xi_-,\xi_+)\in \mathcal{C}^b(M) \text{ is a bi-contact structure strongly adapted to }[\xi_-\cap\xi_+] \bigg\}.$$
\end{corollary}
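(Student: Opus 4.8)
The plan is to realize the three descriptions as images of $\mathcal{SA}^+(M)$ under two natural maps and check that each map is a bijection (in fact a homeomorphism for the ambient $C^\infty$ topologies), with the substantive content supplied by Proposition~\ref{strad}, Lemma~\ref{moserlemma}, and the existence lemmas already established in this section. First I would treat the Reeb description via the map $\Phi\colon (\alpha_+,\langle X\rangle)\mapsto (R_{\alpha_+},\langle X\rangle)$. That $\Phi$ lands in the asserted space is exactly Proposition~\ref{strad}: $\alpha_+$ is strongly adapted iff $(R_{\alpha_+},[X,R_{\alpha_+}],X)$ is an oriented frame, i.e.\ iff $R_{\alpha_+}$ is a strongly adapted Reeb flow. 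Rescaling $\alpha_+\mapsto C\alpha_+$ sends $R_{\alpha_+}\mapsto C^{-1}R_{\alpha_+}$, so $\Phi$ is compatible with the $(\mathbb{R}\setminus\{0\},\cdot)$-quotients on both sides.

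For the inverse of $\Phi$, the key observation is that the Reeb data recovers the positive contact structure: given a strongly adapted $R_+$, one sets $\xi_+=\langle X,[X,R_+]\rangle$, which is well defined independently of the representative of $\langle X\rangle$ since replacing $X$ by $fX$ alters $[X,R_+]$ only by a multiple of $X$, and which is genuinely two-dimensional by Proposition~\ref{strad}(5)--(6). Then $\alpha_+$ is the unique $1$-form with $\ker\alpha_+=\xi_+$ and $\alpha_+(R_+)=1$; since $R_+$ was given as the Reeb field of a contact form with kernel $\xi_+$, this normalization pins down precisely that contact form, so the reconstruction inverts $\Phi$ and again respects rescaling. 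This establishes the first equivalence.

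Next I would treat the bi-contact description via $\Psi\colon (\alpha_+,\langle X\rangle)\mapsto(\langle X,R_{\alpha_+}\rangle,\ker\alpha_+)$. Because plane fields are unchanged under $\alpha_+\mapsto C\alpha_+$, the map $\Psi$ is constant on each $(\mathbb{R}\setminus\{0\},\cdot)$-orbit, which explains the absence of a quotient on the bi-contact side; its image is a strongly adapted bi-contact structure directly from the definition, the negative contact structure $\langle X,R_{\alpha_+}\rangle$ containing the Reeb field $R_{\alpha_+}$ of $\xi_+=\ker\alpha_+$. For surjectivity, given a strongly adapted bi-contact structure $(\xi_-,\xi_+)$, by definition $\xi_-$ contains a Reeb field $R_+$ of $\xi_+$; taking $\alpha_+$ to be the contact form with kernel $\xi_+$ and Reeb field $R_+$ yields $\langle X,R_{\alpha_+}\rangle=\langle X,R_+\rangle=\xi_-$, using that $R_+$ is transverse to $\xi_+\ni X$ so that $X$ and $R_+$ span $\xi_-$. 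Injectivity of $\Psi$ is exactly Lemma~\ref{moserlemma}: if $\ker\alpha_+=\ker\bar\alpha_+$ and $\langle X,R_{\alpha_+}\rangle=\langle X,R_{\bar\alpha_+}\rangle$, then $\alpha_+=C\bar\alpha_+$, so the two agree in $\mathcal{SA}^+(M)$.

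The genuine content beyond this bookkeeping sits in two places, and \textbf{the main obstacle} is injectivity of $\Psi$: it is precisely Lemma~\ref{moserlemma}, which ultimately rests on the nonexistence of closed $X$-invariant surfaces for Anosov flows. The second delicate point is the recovery of $\xi_+$ from the Reeb data through $\xi_+=\langle X,[X,R_+]\rangle$, which relies on the strong-adaptation identities of Proposition~\ref{strad}. Everything else is routine: continuity of $\Phi$, $\Psi$, and their inverses follows because the constituent operations (passing to the Reeb field, taking kernels, forming the Lie bracket $[X,R_+]$, and normalizing by $\alpha_+(R_+)=1$) are continuous in the $C^\infty$ topologies, so the bijections are in fact homeomorphisms and the stated equivalences $\simeq$ hold.
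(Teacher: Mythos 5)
Your proposal is correct and takes essentially the same route as the paper: the first equivalence via the reconstruction $\ker{\alpha_+}=\langle [R_+,X],X\rangle$, $\alpha_+(R_+)=1$ (justified through Proposition~\ref{strad}), and the second via Lemma~\ref{moserlemma} and Equation~\ref{moser}. The paper's own proof is simply a terser version of your argument, leaving the surjectivity and scaling-compatibility bookkeeping implicit.
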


\begin{proof}
To see the first equivalence, note that a strongly adapted $R_+$ determines $\alpha_+$ via $\ker{\alpha_+}=\langle [R_+,X],X\rangle $ and $\alpha_+(R_+)=1$. The second equivalence is a direct consequence of Lemma~\ref{moserlemma} and Equation~\ref{moser} in particular. 
\end{proof}

Our next goal is to establish that when fixing an Anosov flow direction $\langle X \rangle$, the space of strong adaptations is contractible. Ideally, we would hope that such space is convex. But that does not seem to be the case. In the following however, we show that it fibers over the space of Anosov flows, with contractible fibers. This is called an {\em acyclic fibration}. In particular, $\mathcal{SA}^+(M)$ is homotopy equivalent to the space of Anosov flows on $M$.

\begin{theorem}\label{fiber}
The map $$\begin{cases}
\pi_A:\mathcal{SA}^+(M)\rightarrow {\mathcal{A}}(M)\\
(\alpha_{+},\langle X \rangle)\mapsto \langle X \rangle
\end{cases}$$ is an acyclic fibration. 
\end{theorem}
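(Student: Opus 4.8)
The plan is to verify the two defining properties of an acyclic fibration separately: that $\pi_A$ is a (locally trivial, hence Serre) fibration, and that each fiber is contractible. The fiber $\pi_A^{-1}(\langle X \rangle)$ is precisely the space of strongly adapted contact forms for the fixed Anosov flow $\langle X \rangle$, taken up to constant scaling; by Theorem~\ref{main} it is non-empty.

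For the fibration property, I would exploit that strong adaptation is a $C^2$-open condition on $\alpha_+$ (as noted after Corollary~\ref{strcor}) together with the flexibility afforded by the flow action $\alpha_+ \mapsto X^{f*}\alpha_+$ and the Gray/Moser arguments of Lemmas~\ref{grayf}--\ref{moserlemma}. Concretely, given $\langle X_0 \rangle$ and a strongly adapted $\alpha_+^0$, for every $\langle X \rangle$ in a small neighborhood $U$ of $\langle X_0 \rangle$ in the \emph{open} subset $\mathcal{A}(M)$ I would correct $\alpha_+^0$ to a form supporting $X$ by subtracting a multiple of a fixed transverse $1$-form to kill $\alpha_+^0(X)$; openness guarantees the corrected form remains strongly adapted for $X$, producing a continuous trivialization $\pi_A^{-1}(U)\cong U\times \pi_A^{-1}(\langle X_0\rangle)$. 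Local triviality over $\mathcal{A}(M)$ then makes $\pi_A$ a fibration.

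The substantive step, and the one I expect to be the main obstacle, is contractibility of the fiber $F_X := \pi_A^{-1}(\langle X\rangle)$. Writing $\alpha_+=\alpha_u-\alpha_s$ relative to the flow-determined invariant splitting $E^{ws}\oplus E^{wu}$, strong adaptation is the pointwise inequality $[r_u+X\cdot\ln r_u] > [r_s+X\cdot\ln(-r_s)]$, so $F_X$ is cut out of the (contractible) space of pairs of norms on $(E^u,E^s)$ by an open inequality that couples the two factors nonlinearly. The idea is to deformation retract $F_X$ onto the subset $\mathcal{S}_X$ of forms inducing \emph{strongly adapted norms} on both bundles, i.e.\ satisfying $[r_s+X\cdot\ln(-r_s)]<0<[r_u+X\cdot\ln r_u]$ (Theorem~\ref{mains}). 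This $\mathcal{S}_X$ is convex by Lemma~\ref{adnormconvex} together with Definition/Proposition~\ref{snorm}, and non-empty by Corollary~\ref{strnormex}, hence contractible. To build the retraction I would run the asymptotic synchronization deformation of Theorem~\ref{expunif} independently on $E^u$ and $E^s$: as $T\to\infty$ it drives $r_u+X\cdot\ln r_u \to \bar r_u>0$ and $r_s+X\cdot\ln(-r_s)\to \bar r_s<0$ uniformly, steering any point of $F_X$ into $\mathcal{S}_X$.

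The difficulty is to upgrade this convergence to an honest deformation retraction that (i) stays inside the open region $g_u>g_s$ at every time and (ii) is stationary on $\mathcal{S}_X$; I would address (i) by reparametrizing the synchronization so the defining inequality is preserved monotonically, and (ii) by concatenating the synchronization homotopy with the straight-line convex contraction inside $\mathcal{S}_X$, glued by a cutoff in $T$. Once $F_X$ is contractible and $\pi_A$ is a fibration, the long exact sequence of the fibration forces $\pi_A$ to induce isomorphisms on all homotopy groups, yielding $\mathcal{SA}^+(M)\simeq\mathcal{A}(M)$.
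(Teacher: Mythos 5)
Your skeleton (local triviality plus contractible fibers) has the same shape as the paper's argument, and your convex core $\mathcal{S}_X$ is indeed convex and non-empty — it is exactly the class the paper calls $\mathcal{SSA}^+(M)$, whose fiberwise convexity is recorded via Lemma~\ref{adnormconvex} and Corollary~\ref{sfiber}. But the step you yourself flag as the main obstacle — upgrading asymptotic synchronization to a deformation retraction of the fiber $F_X$ onto $\mathcal{S}_X$ — is a genuine gap, and it is precisely the step the paper's two proofs are structured to avoid. Theorem~\ref{expunif} controls only the asymptotic behavior of the deformed norms $\|\cdot\|_T$: nothing constrains the intermediate forms to remain strongly adapted at finite $T$, so the proposed path may leave $F_X$, and your fix (``reparametrize the synchronization so the defining inequality is preserved monotonically'') is a restatement of what is needed, not an argument. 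Two further unaddressed obstructions: the synchronized forms $\alpha_{u,T}-\alpha_s$ have kernels along the weak invariant bundles and so are in general only $C^1$ (this is why even the proof of Theorem~\ref{main} needs a separate smoothing step for a single large $T$), hence the path exits the space of smooth forms that constitutes $\mathcal{SA}^+(M)$; and the time at which a given form enters $\mathcal{S}_X$ depends on $\max r_u-\min r_u$ of the starting datum (through the choice of $\epsilon_T$ in Theorem~\ref{expunif}), which is unbounded over the non-compact fiber, so the ``cutoff in $T$'' would itself have to be built continuously in the starting point. The paper never retracts the full fiber onto anything: Proof 1 fibers $\mathcal{SA}^+(M)$ over $\mathcal{C}^+(M)$ and shows the slice with \emph{fixed kernel} is logarithmically convex (Lemma~\ref{l2}); Proof 2 fibers it over adapted norms on $E^s$ and shows the slice $W[\alpha_s]$ with \emph{fixed} $\alpha_s$ is convex. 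In both cases contractibility comes from convexity after freezing part of the data, with no dynamical deformation and hence no intermediate-time control required.

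Your local triviality argument also has a uniformity gap. The correction $\alpha\mapsto\alpha-\frac{\alpha(X)}{\beta(X)}\beta$ must carry the \emph{entire} fiber $\pi_A^{-1}(\langle X_0\rangle)$ into $\pi_A^{-1}(\langle X\rangle)$ for every $X$ in one fixed neighborhood $U$; $C^2$-openness of strong adaptation only yields, for each individual $\alpha$, a neighborhood $U_\alpha$ of $\langle X_0\rangle$, and over the infinite-dimensional fiber (e.g. the forms $X_0^{f*}\alpha^0_+$ for arbitrarily wild $f$) the intersection $\bigcap_\alpha U_\alpha$ need not contain any neighborhood. The paper's Lemma~\ref{l1} is designed to sidestep exactly this: openness is invoked only once per nearby flow, to check that the single plane field $\langle\bar X,v_+\rangle$ spanned by a fixed auxiliary Legendrian $v_+$ is a supporting contact structure, and then each fiber of $\mathcal{C}^+(M)\to\mathcal{A}(M)$ is identified globally with $C^\infty(M;\mathbb{R})$ via the flow action together with Gray's theorem; the remaining fibration $\mathcal{SA}^+(M)\to\mathcal{C}^+(M)$ is then handled by Lemma~\ref{grayf} and Lemma~\ref{l2}. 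So both halves of your proposal need repair, and the fiber-contractibility half in particular rests on a deformation that Theorem~\ref{expunif} does not provide.
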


We give two proofs for this theorem, which we believe are both useful. One naturally yields a filtration of the above fibration through the space of supporting contact structures, which might be more natural from the contact geometric viewpoint, and is more compatible with the our high regularity setting. The other proof relies on a filteration through the space of adapted norms on the stable bundle $E^s$ (similarly, filtration through adapted norms on $E^u$ is possible). This approach has corollaries on the theory of adapted norms in Anosov dynamics.

{\em Proof 1.}
It is known that the space of supporting contact structures form an acyclic {\em Serre} fibration over the space of Anosov flows. This was first noted by Massoni \cite{massoni}. We improve this in the following lemma. %Similar can be shown in the category of projectively Anosov flows. 
In the following, $C^\infty(M,\mathbb{R})$ is the space of smooth real functions on the underlying manifold $M$.

\begin{lemma}\label{l1}
We have the fiber bundle structure
$$\begin{tikzcd} 
 C^\infty(M;\mathbb{R})  \arrow[r, hook] & \mathcal{C}^+(M) \arrow[d, two heads] \\   &  {\mathcal{A}}(M) 
\end{tikzcd}.$$

The same is true for $\mathcal{C}^-(M)$, as well as $\mathcal{C}^b(M)$ with  $C^\infty(M;\mathbb{R}) \times  C^\infty(M;\mathbb{R}) $ fibers.
\end{lemma}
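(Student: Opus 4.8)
The plan is to show that $\pi_+:\mathcal{C}^+(M)\to\mathcal{A}(M)$ is a locally trivial fiber bundle whose fiber is homeomorphic to $C^\infty(M;\mathbb{R})$. Since $C^\infty(M;\mathbb{R})$ is a convex (hence contractible) topological vector space, this refines Massoni's observation \cite{massoni} that $\pi_+$ is an acyclic Serre fibration into an honest fiber bundle with explicit, contractible fiber. The two ingredients I would isolate are (i) a homeomorphism between each fiber and $C^\infty(M;\mathbb{R})$, and (ii) local trivializations of $\pi_+$ over $\mathcal{A}(M)$.

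For (i), fix an Anosov direction $\langle X\rangle$ and a representative $X$, noting that the supporting positive contact structures depend only on $\langle X\rangle$ and not on the chosen reparametrization. After choosing a reference supporting $\xi_+^0$, I would study the map
$$\Phi:C^\infty(M;\mathbb{R})\longrightarrow \pi_+^{-1}\langle X\rangle,\qquad f\longmapsto X_*^{-f}\xi_+^0,$$
where $X^{-f}$ is the reparametrized flow as in the proof of Lemma~\ref{grayf}. Surjectivity of $\Phi$ is precisely the mechanism used there: any two supporting positive contact structures are homotopic through supporting positive contact structures (Theorem~1.5 of \cite{hoz3}), and Gray's theorem realizes such a homotopy by an isotopy generated by a scaling of $X$, that is, by some $X^{-f}$ (see \cite{massot}). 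Injectivity is where the dynamics enters: if $X_*^{-f}\xi_+^0=\xi_+^0$, then in $TM/\langle X\rangle$ the induced action returns the line $\xi_+^0/\langle X\rangle$ to itself after the corresponding time at each point; but under an Anosov flow this line rotates strictly monotonically through the open arc bounded by $E$ and $F$ (the Mitsumatsu--Eliashberg--Thurston picture, Figure~3 (right)), an arc of angular length less than $\pi$, so $t\mapsto X_*^t(\xi_+^0/\langle X\rangle)$ is injective and $f\equiv 0$. Continuity of $\Phi$ and $\Phi^{-1}$ is routine from the smooth dependence of the reparametrized flow on $f$.

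For (ii), I would construct a local section of $\pi_+$ near a given $\langle X_0\rangle$: starting from a supporting $\xi_+^0\ni X_0$, one rotates $\xi_+^0$ by a small, continuously varying amount so that it contains each nearby flow direction $X$, and invokes the openness of the supporting condition (a contact and transversality condition, hence $C^2$-open in the pair) to ensure the result remains a positive supporting contact structure. This yields a continuous assignment $\langle X\rangle\mapsto \xi_+^0(X)$ over a neighborhood $U\subset\mathcal{A}(M)$, and then
$$U\times C^\infty(M;\mathbb{R})\longrightarrow \pi_+^{-1}(U),\qquad (\langle X\rangle,f)\longmapsto \big(\langle X\rangle,\,X_*^{-f}\xi_+^0(X)\big)$$
is a trivialization, combining the fiberwise homeomorphism of (i) with the smooth dependence of $X^{-f}$ on $X$. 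The case of $\mathcal{C}^-(M)$ is identical with the roles of the negative and positive contact structures exchanged, the relevant object now being the monotonically rotating line $\xi_-/\langle X\rangle$. For $\mathcal{C}^b(M)$, a supporting bi-contact structure is an independent transverse pair $(\xi_-,\xi_+)$, and deforming each factor by its own reparametrization function produces the product fiber $C^\infty(M;\mathbb{R})\times C^\infty(M;\mathbb{R})$; the product of the two trivializations supplies the bundle structure.

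The main obstacle I anticipate is the injectivity (freeness) in step (i): ruling out a nonzero $f$ with $X_*^{-f}\xi_+^0=\xi_+^0$ rests on the strict monotonicity of the flow's rotation on supporting plane fields, and one should confirm that this monotonicity persists for the reparametrized action along orbits. This is in the same spirit as the closing argument of Lemma~\ref{moserlemma}, where the dynamical rigidity of Anosov flows — in particular the absence of closed invariant surfaces — forces a deformation function to be trivial. A secondary technical point is arranging the local section of (ii) to depend continuously on $X$ while maintaining $X\subset\xi_+^0(X)$; here it is the openness of the supporting condition, rather than any canonical formula, that does the work.
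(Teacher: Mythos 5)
Your proposal follows essentially the same route as the paper's proof: the fiber over $\langle X \rangle$ is identified with $C^\infty(M;\mathbb{R})$ via $f\mapsto X_*^{f}\xi_+^0$, and local triviality comes from producing, for each nearby Anosov field $\bar{X}$, a reference supporting contact structure by a small rotation of $\xi_+^0$ (the paper does this concretely as $\bar{\xi}_+:=\langle \bar{X},v_+\rangle$ for a fixed $v_+\subset\xi_+^0$ linearly independent from $X$, invoking openness of the contact condition). Your extra detail on injectivity and surjectivity of the fiber identification --- strict monotone rotation trapped in the arc between $E$ and $F$, plus Gray's theorem as in Lemma~\ref{grayf} --- is a correct elaboration of what the paper simply calls a natural identification.
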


\begin{proof}
We have the projection $\begin{cases} \pi_+: \mathcal{C}^+(M)\rightarrow \mathcal{A}(M)\\ \pi_+(\xi_+,\langle X \rangle)=\langle X \rangle \end{cases}$. Fix an Anosov vector field $X$ and a supporting positive contact structure $\xi_+$. We can naturally identify $\pi_+^{-1}\langle X\rangle$ with the space of ($C^\infty$) real functions on $M$, which we denote by $C^\infty(M;\mathbb{R})$, via
$$\begin{cases}
C^\infty(M;\mathbb{R})\rightarrow \pi^{-1}_+\langle X\rangle \\
f\mapsto X^{f}_* \xi_+
\end{cases}.$$
Now take a $C^\infty$ vector field $v_+\subset \xi_+$ which is linearly independent with $X$. If $\bar{X}$ be an Anosov vector field $\epsilon$-close to $X$, then $\bar{\xi}_+:=\langle \bar{X},v_+ \rangle$ is a supporting positive contact structure for $\bar{X}$, if $\epsilon$ is taken sufficiently small (thanks to the openness of the contact condition). This results in a canonical identification of $\pi^{-1}_+\langle \bar{X}\rangle$ with $C^\infty(M;\mathbb{R})$ again. This means that the fibration $\pi_+:\mathcal{C}^+(M)\rightarrow \mathcal{A}(M)$ in fact has a locally product structure with fibers equivalent to $\mathcal{C}^+(M)$.
\end{proof}

Given the above, it is enough to show that fixing an Anosov flow direction $\langle X\rangle$, the space of strongly adapted contact forms with a fixed kernel $\xi_+$ is contractible.

\begin{lemma}\label{l2}
Fixing $(\xi_+,\langle X \rangle)\in \mathcal{C}^+(M)$, the set $\{\alpha_+ |\text{ strongly adapted to }\langle X \rangle
, \ker{\alpha_+}=\xi_+  \}/ \{ \pm 1\}$ is logarithmically convex.
\end{lemma}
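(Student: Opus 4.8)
The plan is to parametrize the contact forms with kernel $\xi_+$ and reduce strong adaptation to a \emph{pointwise} condition in coordinates that depend \emph{affinely} on the logarithm of the scaling, and then exhibit the admissible region as convex. Fix a reference strongly adapted $\alpha_+$ with $\ker\alpha_+=\xi_+$ (which exists by Lemma~\ref{grayf}); normalizing the co-orientation so that $d\alpha_+|_{\xi_+}$ is positive, any other such positive contact form is $e^h\alpha_+$ for a unique $h\in C^\infty(M,\mathbb{R})$, and \emph{logarithmic convexity} amounts to convexity of $\{h\mid e^h\alpha_+\text{ is strongly adapted}\}$. By Proposition~\ref{strad}, for an adapted form strong adaptation is equivalent to condition~(8), and since strong adaptation forces adaptation (the Reeb field then lies in the second/fourth quadrant, as noted after Corollary~\ref{strcor}), the strongly adapted forms are exactly the adapted ones satisfying~(8). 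By Proposition~\ref{exppro}(4), scaling by $e^h$ sends $r_u\mapsto r_u+X\cdot h$ and $r_s\mapsto r_s+X\cdot h$, so the difference $\Delta:=r_u-r_s>0$ is independent of $h$.

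First I would introduce the two quantities $R:=r_u+X\cdot h$ and $\psi:=X\cdot(X\cdot h)=X^{(2)}\cdot h$, both of which are \emph{affine} functionals of $h$ (since $X\cdot$ is $\mathbb{R}$-linear). Then $r_s+X\cdot h=R-\Delta$, so the adaptation constraint becomes the pointwise condition $0<R<\Delta$, and condition~(8), after writing $X\cdot\ln R=(X\cdot r_u+\psi)/R$ and $X\cdot\ln(-(R-\Delta))=(X\cdot r_s+\psi)/(R-\Delta)$, becomes
$$G(R,\psi):=\Delta+\frac{X\cdot r_u+\psi}{R}-\frac{X\cdot r_s+\psi}{R-\Delta}>0,$$
where $X\cdot r_u$ and $X\cdot r_s$ are fixed functions determined by the reference form. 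Thus $e^h\alpha_+$ is strongly adapted if and only if, at every $p\in M$, the pair $(R(p),\psi(p))$ lies in the region $\mathcal{R}_p:=\{\,0<R<\Delta,\ G(R,\psi)>0\,\}$.

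The heart of the argument is to show that each $\mathcal{R}_p$ is convex. On the strip $0<R<\Delta$ one has $\partial_\psi G=\tfrac1R-\tfrac1{R-\Delta}=-\Delta/\big(R(R-\Delta)\big)>0$, so $G$ is strictly increasing in $\psi$ and $\{G>0\}=\{\psi>\psi_p(R)\}$, where $\psi_p(R)$ is the unique root of $G(R,\cdot)$. Solving $G=0$ (clearing the denominators $R$ and $R-\Delta$) yields
$$\psi_p(R)=R^2+\Big(\tfrac{X\cdot r_u-X\cdot r_s}{\Delta}-\Delta\Big)R-X\cdot r_u,$$
an upward-opening parabola, hence a convex function of $R$. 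Therefore $\mathcal{R}_p$ is the epigraph of a convex function intersected with a vertical strip, so it is convex.

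Finally, since $R$ and $\psi$ are affine in $h$, for $h_t=(1-t)h_0+th_1$ the pair $(R_t(p),\psi_t(p))$ is the corresponding convex combination of $(R_0(p),\psi_0(p))$ and $(R_1(p),\psi_1(p))$; if the latter two lie in the convex set $\mathcal{R}_p$, so does the former, for every $p$. Hence $h_t$ is strongly adapted, and the set is convex, proving logarithmic convexity. The main obstacle is conceptual rather than computational: condition~(8) involves logarithms and ratios of the (shifted) expansion rates and is \emph{not} visibly convex in $h$, so the key move is to recognize that passing to the affine coordinates $(R,\psi)=(r_u+X\cdot h,\,X^{(2)}\cdot h)$ linearizes the $h$-dependence and turns the admissible region into the epigraph of a quadratic. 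A secondary technical point is regularity — the splitting $\alpha_+=\alpha_u-\alpha_s$ and the rates $r_u,r_s$ are a priori only as smooth as the weak bundles — which is handled as usual by working with the $C^\infty$ forms and invoking Proposition~\ref{exppro}(2).
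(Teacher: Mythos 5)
Your proof is correct, but it takes a genuinely different route from the paper's. The paper argues through the Reeb--dynamical side of Proposition~\ref{strad} (items (2), (6), (7)): it writes the Reeb field of $\alpha_{+,\tau}=e^{\tau h}\alpha_+$ as $R_{+,\tau}=e^{-\tau h}R_++X_{\tau h}$, where $X_{\tau h}$ solves the Moser-type relation (Equation~\ref{moser}), exploits the homogeneity $e^{\tau h}X_{\tau h}=\tau e^{h}X_h$, and reduces strong adaptation of $\alpha_{+,\tau}$ to positivity of $d\alpha_+(X,[X,R_{\alpha_+}])+\tau\, d\alpha_+(X,[X,e^hX_h])$, which is affine in $\tau$ and hence positive on $[0,1]$ once positive at the endpoints. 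You instead use item (8): under scaling both expansion rates shift by $X\cdot h$ (Proposition~\ref{exppro}(4)), so strong adaptation of $e^h\alpha_+$ becomes the pointwise condition that the pair $(R,\psi)=(r_u+X\cdot h,\,X^{(2)}\cdot h)$, which is \emph{affine} in $h$, lies in $\mathcal{R}_p=\{0<R<\Delta,\ G>0\}$; your sign analysis ($\partial_\psi G>0$ on the strip) and the formula $\psi_p(R)=R^2+\big(\tfrac{X\cdot r_u-X\cdot r_s}{\Delta}-\Delta\big)R-X\cdot r_u$ for the root both check out, so $\mathcal{R}_p$ is the epigraph of an upward parabola over an interval, hence convex, and convexity of the admissible set of logarithms follows. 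Both proofs hinge on finding coordinates in which the condition becomes affine, but they buy different things: the paper's argument stays entirely within smooth contact geometry (Reeb fields and the Moser equation), never invoking the splitting $\alpha_+=\alpha_u-\alpha_s$ or the expansion rates, whose regularity is a priori tied to the invariant bundles --- a point you correctly flag and which is handled because only derivatives along the flow enter (exactly as in the paper's own use of condition (8)). Your argument, in exchange, is closer in spirit to the paper's second proof of Theorem~\ref{fiber} (the expansion-rate computation of Lemma~\ref{adnormconvex}), gives a slightly stronger and more geometric conclusion --- the full admissible set is the preimage of a fiberwise convex planar region under an affine map, with the constraint region identified explicitly --- and treats segments between any two admissible scalings $h_0,h_1$ at once, the paper's formulation being the case $h_0=0$ after a change of reference form.
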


\begin{proof}
Let $\alpha_+$ and $\bar{\alpha}_+=e^h\alpha_+$ be strongly adapted contact forms for $X$, with Reeb flows $R_+$ and $\bar{R}_+=e^{-h}R_++X_h$, respectively, where $X_h$ is determined via Equation~\ref{moser}. We want to show that $\alpha_{+,\tau}=e^{\tau h}\alpha_+$ is strongly adapted for $\tau\in [0,1]$ (i.e. being {\em logarithmically convex}). Let $R_{+,\tau}=e^{-\tau h}R_+ +X_{\tau h}$ be the Reeb vector field for $\alpha_{+,\tau}$ and notice that Equation~\ref{moser} also implies $e^{\tau h}X_{\tau h}=\tau e^hX_h$. We have
$$\alpha_{+,\tau}\text{ strongly adapted to }X \Longleftrightarrow 0<d\alpha_{+,\tau}(X,[X,R_{+,\tau}])$$
$$\Longleftrightarrow 0<d\alpha_{+,\tau}(X,[X,e^{\tau h}R_{+,\tau}])=d\alpha_{+,\tau}(X,[X,R_++e^{\tau h}X_{\tau h}])$$
$$\Longleftrightarrow 0<d\alpha_+(X,[X,R_+]+[X,e^{\tau h}X_{\tau h}])=d\alpha_+(X,[X,R_{\alpha_+}])+\tau d\alpha_+(X,[X,e^h X_h]).$$

Therefore, $\alpha_{+,\tau}$ is strongly adapted for all $\tau\in[0,1]$, if and only if, $\alpha_{+,0}=\alpha_+$ and $\alpha_{+,1}=\bar{\alpha}_+$ are strongly adapted, completing the proof.
\end{proof}

Now, we have a composition of acyclic fibrations 
$$
\begin{cases}
\mathcal{SA}^+(M) \rightarrow \mathcal{C}^+(M)\rightarrow \mathcal{A}(M) \\
(\alpha_+,\langle X \rangle)\mapsto (\ker{\alpha_+},\langle X \rangle) \mapsto \langle X \rangle
\end{cases}.
$$
This gives the desired acyclic fibration.
\qed

The above proof gives in fact a natural filtration of the fibration map $\pi_A:\mathcal{SA}^+(M)\rightarrow \mathcal{A}(M)$ through $\mathcal{C}^+(M)$. Similarly, a filteration of $\pi_A$ can be given through $\mathcal{C}^-(M)$ via
$$\begin{cases}
\mathcal{SA}^+(M)\rightarrow \mathcal{C}^-(M)\rightarrow \mathcal{A}(M) \\
(\alpha_+,\langle X\rangle)\mapsto (\langle R_{\alpha_+},X \rangle,\langle X \rangle)\mapsto \langle X \rangle
\end{cases},$$
which might be more natural from the Reeb dynamical definition of $\mathcal{SA}^+(M)$ (see Corollary~\ref{saspace}). Finally, for the bi-contact geometric approach, it should be noted that we have an embedding of fibrations via
$$\begin{cases}
\mathcal{SA}^+(M)\rightarrow \mathcal{C}^b(M) \\
(\alpha_+,\langle X \rangle)\mapsto (\langle R_{\alpha_+},X \rangle,\ker{\alpha_+})
\end{cases}.$$

We summarize these observation in the following refinement of the fibration structure claimed in Theorem~\ref{fiber}.

\begin{corollary}\label{refine}
Using the above notation, we have the commutative diagram
$$\begin{tikzcd} 
\mathcal{SA}^+(M)  \arrow[drr, two heads, bend left] \arrow[ddr, two heads, bend right] \arrow[dr, hook] & & \\ & \mathcal{C}^b(M)  \arrow[r, two heads] \arrow[d, two heads] \arrow[dr, two heads]& \mathcal{C}^+(M) \arrow[d, two heads] \\ & \mathcal{C}^-(M) \arrow[r, two heads] &  {\mathcal{A}}(M) 
\end{tikzcd},$$
where any composition of maps from $\mathcal{SA}^+(M)$ to ${\mathcal{A}}(M)$ is equivalent to $\pi_A$.
\end{corollary}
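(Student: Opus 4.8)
The plan is to treat Corollary~\ref{refine} as an assembly of the structures established above rather than as a fresh argument, since the genuine fibration content is already contained in Lemma~\ref{l1}, Lemma~\ref{l2}, Lemma~\ref{moserlemma}, Corollary~\ref{saspace}, and Theorem~\ref{fiber}. First I would record every arrow explicitly. Out of $\mathcal{SA}^+(M)$ there are the embedding $(\alpha_+,\langle X\rangle)\mapsto(\langle R_{\alpha_+},X\rangle,\ker{\alpha_+})$ into $\mathcal{C}^b(M)$ and the two projections $(\alpha_+,\langle X\rangle)\mapsto(\ker{\alpha_+},\langle X\rangle)$ and $(\alpha_+,\langle X\rangle)\mapsto(\langle R_{\alpha_+},X\rangle,\langle X\rangle)$ into $\mathcal{C}^+(M)$ and $\mathcal{C}^-(M)$. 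Inside the diagram sit the forgetful maps $\mathcal{C}^b(M)\to\mathcal{C}^\pm(M)$, the intersection map $\mathcal{C}^b(M)\to\mathcal{A}(M)$, $(\xi_-,\xi_+)\mapsto[\xi_-\cap\xi_+]$, and the supported-flow projections $\mathcal{C}^\pm(M)\to\mathcal{A}(M)$.

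Second, I would verify commutativity, which is a direct unwinding of these formulas. Precomposing $\mathcal{C}^b(M)\to\mathcal{C}^+(M)$ with the embedding returns $(\ker{\alpha_+},\langle X\rangle)$, i.e. the displayed map $\mathcal{SA}^+(M)\to\mathcal{C}^+(M)$, and symmetrically for $\mathcal{C}^-(M)$; meanwhile every route from $\mathcal{SA}^+(M)$ down to $\mathcal{A}(M)$ outputs the class $\langle X\rangle=[\xi_-\cap\xi_+]$ of the supported Anosov flow. This gives the closing assertion that all such composites coincide with $\pi_A$, whose acyclicity is Theorem~\ref{fiber}.

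Third, I would upgrade the set maps to the asserted types. The three projections onto $\mathcal{A}(M)$ are the fiber bundles of Lemma~\ref{l1}, with fibers $C^\infty(M;\mathbb{R})$ for $\mathcal{C}^\pm(M)$ and $C^\infty(M;\mathbb{R})\times C^\infty(M;\mathbb{R})$ for $\mathcal{C}^b(M)$. Reading the forgetful map $\mathcal{C}^b(M)\to\mathcal{C}^+(M)$ in the local trivializations of that lemma turns it, fiberwise, into the coordinate projection $(f_-,f_+)\mapsto f_+$, so it is again a fiber bundle with fiber $C^\infty(M;\mathbb{R})$, and likewise for $\mathcal{C}^b(M)\to\mathcal{C}^-(M)$; in particular both are surjective. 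The maps $\mathcal{SA}^+(M)\to\mathcal{C}^\pm(M)$ are surjective by Lemma~\ref{grayf} and its companion lemma (every supporting $\xi_+$, resp. $\xi_-$, is realized by a strongly adapted $\alpha_+$), with the $\mathcal{C}^+(M)$ map being the acyclic fibration isolated in the first proof of Theorem~\ref{fiber} (contractible, indeed logarithmically convex, fibers by Lemma~\ref{l2}). Finally, injectivity of the hook arrow $\mathcal{SA}^+(M)\hookrightarrow\mathcal{C}^b(M)$ is precisely Lemma~\ref{moserlemma}: the pair $(\langle R_{\alpha_+},X\rangle,\ker{\alpha_+})$ determines $\alpha_+$ up to constant scaling, so the map is a bijection onto the strongly adapted bi-contact structures, exactly the identification of Corollary~\ref{saspace}.

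The main obstacle I anticipate is the bookkeeping for the forgetful fibrations $\mathcal{C}^b(M)\to\mathcal{C}^\pm(M)$: one must check that the local trivializations Lemma~\ref{l1} produces for the source and the target can be chosen \emph{compatibly}, so that the map is honestly a local product projection and not merely fiberwise onto. Concretely this means coordinatizing both bundles over a single neighborhood in $\mathcal{A}(M)$ using the same complementary field $v_+\subset\xi_+$ together with an independent $v_-\subset\xi_-$, and then invoking the openness of the contact and transversality conditions to see that these choices remain valid, and the two trivializations remain simultaneously defined, on that neighborhood. Once this compatibility is in hand, the entire diagram together with its arrow decorations follows formally from the commutativity checked above.
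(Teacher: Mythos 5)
Your proposal is correct and matches the paper's treatment: the paper presents Corollary~\ref{refine} as a summary of the preceding discussion, assembling exactly the same pieces you cite --- the filtration $\mathcal{SA}^+(M)\to\mathcal{C}^+(M)\to\mathcal{A}(M)$ from Proof~1 of Theorem~\ref{fiber} (via Lemmas~\ref{l1} and~\ref{l2}), the analogous filtration through $\mathcal{C}^-(M)$ using the companion of Lemma~\ref{grayf}, the embedding into $\mathcal{C}^b(M)$ via Lemma~\ref{moserlemma} and Corollary~\ref{saspace}, and commutativity by unwinding the defining formulas. Your added care about choosing compatible local trivializations for the forgetful maps $\mathcal{C}^b(M)\to\mathcal{C}^\pm(M)$ is a sound refinement but does not change the route.
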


We now present a second proof for Theorem~\ref{fiber} with emphasis on the space of norms induced on the invariant bundles.

{\em Proof 2.}
In this proof, we want to focus on the adapted norms involved. We prove the following intermediate step (see Definition~\ref{snorm}).

\begin{lemma}\label{adnormconvex}
Fixing Anosov $X$, the space of (strongly) adapted norms on $E^u$ is convex.
\end{lemma}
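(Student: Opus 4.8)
The plan is to translate the statement about norms into a statement about their associated $1$-forms, where convex combinations become transparent, and then to handle the strongly adapted case by exploiting the linearity of $\mathcal{L}_X$ to reduce it to the adapted case. First I would recall the correspondence between norms on $E^u$ and non-vanishing $1$-forms $\alpha_u$ with $\ker\alpha_u=E^{ws}$, normalized (by the orientation convention) so that $\alpha_u(e_u)>0$. Any two such $1$-forms have the same kernel, hence $\alpha_u^1=g\,\alpha_u^0$ for a positive function $g$; consequently, for $e\in E^u$ the values $\alpha_u^0(e),\alpha_u^1(e)$ share a sign, so the pointwise convex combination of norms $||\cdot||_\tau=(1-\tau)||\cdot||_0+\tau||\cdot||_1$ corresponds exactly to the convex combination of $1$-forms $\alpha_u^\tau:=(1-\tau)\alpha_u^0+\tau\alpha_u^1$, which again has kernel $E^{ws}$ and is non-vanishing. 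Thus the convex structure on norms is carried faithfully to the affine structure on norm $1$-forms.

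For the adapted case, I would compute the expansion rate of $\alpha_u^\tau$ directly. Since $\mathcal{L}_X$ is linear and $\mathcal{L}_X\alpha_u^i=r_u^i\alpha_u^i$, writing everything over $\alpha_u^0$ via $\alpha_u^1=g\alpha_u^0$ gives
$$\mathcal{L}_X\alpha_u^\tau=\big[(1-\tau)r_u^0+\tau g\,r_u^1\big]\alpha_u^0=r_u^\tau\,\alpha_u^\tau,\qquad r_u^\tau=\frac{(1-\tau)r_u^0+\tau g\,r_u^1}{(1-\tau)+\tau g}.$$
This exhibits $r_u^\tau$ as a pointwise convex combination of $r_u^0$ and $r_u^1$ with weight $\lambda=\frac{1-\tau}{(1-\tau)+\tau g}\in[0,1]$, so $r_u^\tau$ lies between $\min(r_u^0,r_u^1)$ and $\max(r_u^0,r_u^1)$ at every point; in particular $r_u^0,r_u^1>0$ forces $r_u^\tau>0$, proving convexity of the set of adapted norms.

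Finally, for the strongly adapted case I would reduce to the adapted case via the operation $\alpha_u\mapsto\mathcal{L}_X\alpha_u$. By Definition/Proposition~\ref{snorm} (together with Proposition~\ref{exppro}(4)), $\alpha_u$ is strongly adapted precisely when the $1$-form $\mathcal{L}_X\alpha_u=r_u\alpha_u$ — which again has kernel $E^{ws}$ and, since $r_u>0$, respects the co-orientation — induces an adapted norm on $E^u$. Because $\mathcal{L}_X$ is linear, it carries the convex combination $\alpha_u^\tau$ to the convex combination $(1-\tau)\mathcal{L}_X\alpha_u^0+\tau\mathcal{L}_X\alpha_u^1=\mathcal{L}_X\alpha_u^\tau$ of the two \emph{adapted} $1$-forms $\mathcal{L}_X\alpha_u^0$ and $\mathcal{L}_X\alpha_u^1$. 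Applying the adapted case already proved to this pair shows that $\mathcal{L}_X\alpha_u^\tau$ induces an adapted norm, which is exactly the assertion that $\alpha_u^\tau$ is strongly adapted (via Proposition~\ref{exppro}(4) with $h=\ln r_u^\tau$, well defined since $r_u^\tau>0$).

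The main point, and the only place requiring care, is this reduction: the strong-adaptation condition $r_u^2+X\cdot r_u>0$ is nonlinear and not obviously compatible with convex combinations, but the linearity of $\mathcal{L}_X$ on norm $1$-forms makes ``taking the derivative along the flow'' commute with convex combinations, collapsing the strongly adapted case to a second application of the adapted case.
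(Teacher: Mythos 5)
Your proof is correct, and its first half coincides with the paper's: both arguments write the second form as $\bar{\alpha}_u=h\alpha_u$ (your $g$), observe that the convex combination of norms is the convex combination of 1-forms, and read off the expansion rate $r_{u,\tau}=\frac{(1-\tau)r_u+\tau h\bar{r}_u}{(1-\tau)+\tau h}$ as a pointwise weighted average (the paper's Equation~\ref{exps}), which settles the adapted case. Where you genuinely diverge is the strongly adapted case. The paper proceeds by direct computation: it differentiates $r_{u,\tau}$ along $X$ and, after an algebraic rearrangement that uses the identity $X\cdot\ln{h}=\bar{r}_u-r_u$, exhibits $r_{u,\tau}^2+X\cdot r_{u,\tau}$ as a weighted average of $r_u^2+X\cdot r_u$ and $\bar{r}_u^2+X\cdot\bar{r}_u$ (Equation~\ref{derexps}). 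You instead note that, by Definition/Proposition~\ref{snorm}, strong adaptation of $\alpha_u$ is precisely adaptation of the 1-form $\mathcal{L}_X\alpha_u$, and that $\mathcal{L}_X$ is linear, so it intertwines convex combinations: $\mathcal{L}_X\alpha_{u,\tau}=(1-\tau)\mathcal{L}_X\alpha_u+\tau\mathcal{L}_X\bar{\alpha}_u$; a second application of the already-proved adapted case to the pair $\big(\mathcal{L}_X\alpha_u,\mathcal{L}_X\bar{\alpha}_u\big)$ (legitimate, since $r_u,\bar{r}_u>0$ makes these non-vanishing, co-oriented forms with kernel $E^{ws}$ and expansion rates $r_u+X\cdot\ln{r_u}>0$, $\bar{r}_u+X\cdot\ln{\bar{r}_u}>0$) then closes the argument. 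Your route is shorter and more conceptual: it explains why the nonlinear-looking condition $r_u^2+X\cdot r_u>0$ survives convex combination, namely because "derivative along the flow" commutes with the affine structure. What the paper's computation buys in exchange is the explicit identity of Equation~\ref{derexps}, which it reuses immediately afterwards to prove convexity of $W[\alpha_s]=\{\alpha_u \,|\, \alpha_u-\alpha_s \text{ strongly adapted}\}$ by comparing against $-r_{u,\tau}$, feeding into Corollary~\ref{sfiber}; your qualitative reduction does not output that identity directly, although it is recoverable with little extra work, since composing your two adapted-case formulas gives $r_{u,\tau}^2+X\cdot r_{u,\tau}=r_{u,\tau}\,\rho_\tau=\frac{(1-\tau)(r_u^2+X\cdot r_u)+\tau h(\bar{r}_u^2+X\cdot\bar{r}_u)}{(1-\tau)+\tau h}$, where $\rho_\tau$ is the expansion rate of $\mathcal{L}_X\alpha_{u,\tau}$, and this is exactly the simplified form of Equation~\ref{derexps}.
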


\begin{proof}
Suppose $\alpha_u$ and $\bar{\alpha}_u=h\alpha_u$, where $h:M\rightarrow\mathbb{R}_{>0}$ , induce strongly adapted norms on $E^u$, with expansion rates $r_u$ and $\bar{r}_u$, respectively, and define
$$\alpha_{u,\tau}:=(1-\tau)\alpha_u+\tau\bar{\alpha}_u=[(1-\tau)+\tau h]\alpha_u.$$

To derive the expansion rate of $\alpha_{u,\tau}$, compute
$$\mathcal{L}_X\alpha_{u,\tau}=(1-\tau)r_u\alpha_u+\tau\bar{r}_u\bar{\alpha}_u=[(1-\tau)r_u+\tau h\bar{r}_u]\alpha_u=[\frac{(1-\tau)r_u+\tau h\bar{r}_u}{(1-\tau)+\tau h}]\alpha_{u,\tau},$$
implying that the expansion rate of $\alpha_{u,\tau}$ can be written as
\begin{equation}\label{exps}
r_{u,\tau}=\frac{(1-\tau)r_u+\tau h\bar{r}_u}{(1-\tau)+\tau h}.
\end{equation}
Note that the above formula confirms the fact that the space of adapted norms is convex, i.e. $r_u,\bar{r}_u>0$ implies $r_{u,\tau}>0$. To prove the same for the space of strongly adapted norms, compute
$$X\cdot r_{u,\tau}=\frac{[(1-\tau)X\cdot r_u+\tau (X\cdot \bar{r}_u)h+\tau\bar{r}_uX\cdot h]}{[(1-\tau)+\tau h]}-\frac{[(1-\tau)r_u+\tau h\bar{r}_u](\tau X\cdot h)}{[(1-\tau)+\tau h]^2}$$
and therefore, we write (using Equation~\ref{exps})
$$[(1-\tau)+\tau h]^2(r^2_{u,\tau}+X\cdot r_{u,\tau})$$
$$=[(1-\tau)r_u+\tau h\bar{r}_u]^2+[(1-\tau)X\cdot r_u+\tau(X\cdot \bar{r}_u)h+\tau\bar{r}_uX\cdot h][(1-\tau)+\tau h]-[(1-\tau)r_u+\tau h\bar{r}_u](\tau X\cdot h)$$
$$=(1-\tau)^2[r_u^2+X\cdot r_u]+\tau^2[h^2\bar{r}_u^2+h^2X\cdot \bar{r}_u+\bar{r}_uhX\cdot h-\bar{r}_uhX\cdot h ]$$
$$+\tau(1-\tau)[2r_u\bar{r}_u+X\cdot r_u+X\cdot \bar{r}_u+\bar{r}_uX\cdot \ln{h}-r_uX\cdot \ln{h}]$$
$$=(1-\tau)^2[r_u^2+X\cdot r_u]+\tau^2h^2[\bar{r}_u^2+X\cdot \bar{r}_u]+\tau(1-\tau)h[r_u^2+X\cdot r_u+\bar{r}_u^2+X\cdot \bar{r}_u],$$
where in the last line, we have used the fact that $X\cdot \ln{h}=\bar{r}_u-r_u$. This can further simplified to get
\begin{equation}\label{derexps}
r^2_{u,\tau}+X\cdot r_{u,\tau}=\frac{[(1-\tau)^2+\tau(1-\tau)h][r_u^2+X\cdot r_u]+[\tau^2h^2+\tau(1-\tau)h][\bar{r}_u^2+X\cdot \bar{r}_u]}{[(1-\tau)+\tau h]^2}.
\end{equation}

Equation~\ref{derexps} means that $r_{u,\tau}^2+X\cdot r_{u,\tau}$ is a weighted average of $r_{u}^2+X\cdot r_{u}$ and $\bar{r}^2_u+X\cdot \bar{r}_u$, and in particular, $r_u^2+X\cdot r_u ,\bar{r}_u^2+X\cdot \bar{r}_u>0$, if and only if, $r_{u,\tau}^2+X\cdot r_{u,\tau}>0$ for any $s\in [0,1]$, completing the proof.
\end{proof}

As a corollary of the above computations, we can prove the following.

\begin{corollary}
Fixing $\langle X \rangle$ and an adapted $\alpha_s$, the set $W[\alpha_s]:=\{\alpha_u | \alpha_u-\alpha_s \text{ is strongly adapted} \}$ is convex.
\end{corollary}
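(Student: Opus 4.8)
The plan is to reduce membership in $W[\alpha_s]$ to a single scalar inequality and then re-run the weighted-average computation from the proof of Lemma~\ref{adnormconvex}, with one fixed threshold term added. First I would fix the adapted $\alpha_s$ with induced expansion rate $r_s<0$ and set
$$g:=r_s+X\cdot\ln(-r_s),$$
a fixed function on $M$ (it is precisely the expansion rate that $\mathcal{L}_X\alpha_s$ induces on $E^s$, and it depends only on $\alpha_s$). For any $\alpha_u$ with induced rate $r_u$, membership $\alpha_u\in W[\alpha_s]$ says that $\alpha_u-\alpha_s$ is strongly adapted; since strong adaptation forces adaptation, this automatically gives $r_u>0>r_s$, so Proposition~\ref{strad}(8) applies and the condition becomes $r_u+X\cdot\ln r_u>g$. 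Multiplying by $r_u>0$ and using $X\cdot\ln r_u=(X\cdot r_u)/r_u$, this is exactly
$$r_u^2+X\cdot r_u-g\,r_u>0.$$

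Next, given $\alpha_u,\bar\alpha_u\in W[\alpha_s]$ with $\bar\alpha_u=h\alpha_u$ ($h>0$), I would form $\alpha_{u,\tau}=[(1-\tau)+\tau h]\alpha_u$ exactly as in Lemma~\ref{adnormconvex}, so that Equation~\ref{exps} gives $r_{u,\tau}=\frac{(1-\tau)r_u+\tau h\bar r_u}{(1-\tau)+\tau h}$ and Equation~\ref{derexps} already expresses $r_{u,\tau}^2+X\cdot r_{u,\tau}$ as the combination of $r_u^2+X\cdot r_u$ and $\bar r_u^2+X\cdot\bar r_u$ with weights $w_1=(1-\tau)[(1-\tau)+\tau h]$ and $w_2=\tau h[(1-\tau)+\tau h]$, divided by $[(1-\tau)+\tau h]^2$. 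The one new ingredient is the linear correction $g\,r_{u,\tau}$. Clearing denominators, a direct check shows
$$g\,r_{u,\tau}=\frac{w_1\,(g\,r_u)+w_2\,(g\,\bar r_u)}{[(1-\tau)+\tau h]^2},$$
i.e. the linear term splits with the very same weights $w_1,w_2$, precisely because $r_{u,\tau}$ is the corresponding weighted average of $r_u$ and $\bar r_u$.

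Subtracting then yields
$$r_{u,\tau}^2+X\cdot r_{u,\tau}-g\,r_{u,\tau}=\frac{w_1\,[r_u^2+X\cdot r_u-g\,r_u]+w_2\,[\bar r_u^2+X\cdot\bar r_u-g\,\bar r_u]}{[(1-\tau)+\tau h]^2},$$
a nonnegatively-weighted combination of the two endpoint quantities, both of which are positive by the reformulation in the first paragraph. Since $w_1,w_2\ge 0$ for $\tau\in[0,1]$ and the denominator is positive, the left side is positive; and $r_{u,\tau}>0$ by Equation~\ref{exps} (the convexity of adapted norms already recorded in Lemma~\ref{adnormconvex}). Dividing by $r_{u,\tau}>0$ returns $r_{u,\tau}+X\cdot\ln r_{u,\tau}>g$, i.e. $\alpha_{u,\tau}\in W[\alpha_s]$, which proves convexity. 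I expect the main obstacle to be purely bookkeeping: verifying the displayed factorization of $g\,r_{u,\tau}$ with the same weights as in Equation~\ref{derexps}. This is the algebraic observation that makes the fixed threshold $g$ harmless and collapses the statement onto the already-established convexity computation for unstable norms; everything else is a restatement of the strong-adaptation inequality and the positivity of $r_{u,\tau}$.
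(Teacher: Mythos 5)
Your proposal is correct and takes essentially the same route as the paper: both reduce membership in $W[\alpha_s]$ to a quadratic scalar inequality in $r_u$ and then feed it through the weighted-average identity of Equation~\ref{derexps}, using the key observation that the linear correction term (your $g\,r_{u,\tau}$, the paper's $-r_{u,\tau}$) splits with exactly the same weights because $r_{u,\tau}$ is itself that weighted average of $r_u$ and $\bar{r}_u$. The only difference is cosmetic: the paper first reparametrizes $X$ so that $r_s\equiv -1$, which turns your threshold function $g=r_s+X\cdot\ln{(-r_s)}$ into the constant $-1$ and the membership condition into $r_u^2+X\cdot r_u>-r_u$, whereas you carry the general $g$ through the computation without reparametrizing.
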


\begin{proof}
Let $\alpha_+=\alpha_u-\alpha_s\in W[\alpha_s]$ be strongly adapted. Since $\alpha_s$ is adapted, we can assume $r_s\equiv -1$ after a reparametrization of $X$. This means
$$\alpha_+ \text{ strongly adapted } \Leftrightarrow r_u+X\cdot\ln{r_u}-r_s-X\cdot\ln{(-r_s)}=r_u+X\cdot\ln{r_u}+1>0$$
$$\Leftrightarrow r_u^2+X\cdot r_u>-r_u.$$
Now let $\alpha_{u,\tau}:=(1-\tau)\alpha_u+\tau\bar{\alpha}_u$, where $\alpha_u,\bar{\alpha}_u=h\alpha_u \in W [\alpha_s]$. Using Equation~\ref{derexps} from the previous lemma, and noticing that $r^2_u+X\cdot r_u>-r_u$ and $\bar{r}^2_u+X\cdot \bar{r}_u>-\bar{r}_u$ yields
$$[(1-\tau)+\tau h]^2(r^2_{u,\tau}+X\cdot r_{u,\tau})$$
$$=[(1-\tau)^2+\tau(1-\tau)h][r_u^2+X\cdot r_u]+[\tau^2h^2+\tau(1-\tau)h][\bar{r}_u^2+X\cdot \bar{r}_u]$$
$$> [(1-\tau)^2+\tau(1-\tau)h](-r_u)+[\tau^2h^2+\tau(1-\tau)h](-\bar{r}_u)=-[(1-\tau)+\tau h][(1-\tau)r_u+\tau h \bar{r}_u]$$
$$=-[(1-\tau)+\tau h]^2r_{u,\tau},$$
which implies $$r^2_{u,\tau}+X\cdot r_{u,\tau}>-r_{u,\tau},$$ as desired.
\end{proof}

The above argument shows that fixing $\langle X \rangle$, the space of strongly adapted contact forms is fibered over the space of adapted norms on $E^s$ (the choices of $\alpha_s$ with $r_s<0$), with convex fibers. Moreover, the space of (strongly) adapted norms on $E^s$ is convex, which implies the claim about the contractibility of the space.

To show the fibration claim, it is enough to show that the space of adapted norms is a fibration over the space of Anosov flows. Consider an Anosov vector field $X_0$ equipped with an adapted $\alpha_s$ (i.e. $r_s<0$), which after a perturbation can be assumed to be induced from a smooth Riemannian metric on the ambient manifold. Let $X_\tau$ be a family of Anosov vector fields for $\tau \in (-\epsilon,\epsilon)$, where the induced norms $\alpha_{s,\tau}$ on the stable bundles $E^{s,\tau}$ of all $X_\tau$ are adapted. After a scaling, we get a new family of Anosov vector fields $\bar{X}_{\tau}$, whose expansion rate of the stable bundle satisfies $r_{s,\tau}\equiv -1$ for all $\tau\in(-\epsilon,\epsilon)$. The space of adapted norms on the stable bundle $E^{s,\tau}$ then can be identified with $\{ h:M\rightarrow \mathbb{R} | -1+\bar{X}_\tau\cdot h<0\}$.
%\textcolor{red}{arguebetter}
\qed

The second proof has implications in the study of adapted norms in hyperbolic dynamics.

\begin{corollary}
Fix an Anosov flow $X^t$. The space of strongly adapted norms on $E^u$ (or $E^s$) is convex.
\end{corollary}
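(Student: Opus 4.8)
The plan is to recognize that for the bundle $E^u$ this corollary is precisely the content of Lemma~\ref{adnormconvex}, which was already established in the course of the second proof of Theorem~\ref{fiber}. Concretely, given two strongly adapted norms on $E^u$ represented by $\alpha_u$ and $\bar\alpha_u=h\alpha_u$ with $h\colon M\to\mathbb{R}_{>0}$, one forms the convex combination $\alpha_{u,\tau}:=(1-\tau)\alpha_u+\tau\bar\alpha_u$ and computes its expansion rate via Equation~\ref{exps}. The decisive identity is Equation~\ref{derexps}, which exhibits $r_{u,\tau}^2+X\cdot r_{u,\tau}$ as the weighted average
$$r^2_{u,\tau}+X\cdot r_{u,\tau}=\frac{[(1-\tau)^2+\tau(1-\tau)h][r_u^2+X\cdot r_u]+[\tau^2h^2+\tau(1-\tau)h][\bar{r}_u^2+X\cdot \bar{r}_u]}{[(1-\tau)+\tau h]^2},$$
whose coefficients $(1-\tau)^2+\tau(1-\tau)h$ and $\tau^2h^2+\tau(1-\tau)h$ are manifestly nonnegative for $\tau\in[0,1]$ and $h>0$. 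Since strong adaptation of a norm on $E^u$ is equivalent to $r_u^2+X\cdot r_u>0$ by Definition/Proposition~\ref{snorm}, positivity at the two endpoints forces positivity of the interpolant, which is exactly convexity.

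For $E^s$ the only modification is bookkeeping of signs. Writing the contracting expansion rate as $r_s<0$, strong adaptation reads $r_s^2+X\cdot|r_s|=r_s^2-X\cdot r_s>0$. Interpolating $\alpha_{s,\tau}:=(1-\tau)\alpha_s+\tau\bar\alpha_s$ with $\bar\alpha_s=h\alpha_s$ and repeating verbatim the computation of Lemma~\ref{adnormconvex} --- now using $X\cdot\ln{h}=\bar{r}_s-r_s$ --- produces the entirely analogous identity with $r_s^2-X\cdot r_s$ and $\bar{r}_s^2-X\cdot\bar{r}_s$ in place of the unstable quantities, carrying the same nonnegative weights. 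Hence the same weighted-average argument applies and the space of strongly adapted norms on $E^s$ is convex as well.

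There is no genuine obstacle here: the corollary is a direct reformulation of Lemma~\ref{adnormconvex} together with its stable-bundle mirror, and the entire computational heart --- the passage from the convex combination of one-forms to the weighted-average formula Equation~\ref{derexps} --- has already been carried out. The only point worth flagging is that strong adaptation of a norm is sensitive to the parametrization of the flow, so it is essential that $X^t$ (and not merely its reparametrization class $\langle X\rangle$) be held fixed throughout; this is consistent with the hypotheses of Lemma~\ref{adnormconvex}.
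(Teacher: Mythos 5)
Your treatment of $E^u$ is correct and coincides with the paper's argument: for the unstable bundle the corollary is a restatement of Lemma~\ref{adnormconvex}, whose content is Equation~\ref{derexps}. The gap is in the $E^s$ half. You claim that repeating the computation verbatim produces the ``entirely analogous identity'' for $r_s^2-X\cdot r_s$ with the same nonnegative weights. It does not. The cancellation behind Equation~\ref{derexps} is a cancellation between two cross terms: writing $F=\frac{1-\tau}{(1-\tau)+\tau h}$ and $G=\frac{\tau h}{(1-\tau)+\tau h}$, one has $r_{s,\tau}=Fr_s+G\bar r_s$ and
\[
r_{s,\tau}^2=Fr_s^2+G\bar r_s^2-FG(r_s-\bar r_s)^2,\qquad
X\cdot r_{s,\tau}=F\,X\cdot r_s+G\,X\cdot\bar r_s+FG(r_s-\bar r_s)^2,
\]
the second identity because differentiating the weights produces $X\cdot h=h(\bar r_s-r_s)$. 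For the sum $r^2+X\cdot r$ the two defect terms cancel, which is exactly Equation~\ref{derexps}; for the difference $r^2-X\cdot r$ they add, giving
\[
r_{s,\tau}^2-X\cdot r_{s,\tau}=F\big[r_s^2-X\cdot r_s\big]+G\big[\bar r_s^2-X\cdot\bar r_s\big]-2FG(r_s-\bar r_s)^2 .
\]
The extra term is strictly negative whenever $r_s\neq\bar r_s$, so positivity at the two endpoints no longer forces positivity of the interpolant, and your weighted-average argument for $E^s$ collapses.

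The root of the problem is that you are using the wrong form of the stable condition. The operative definition of strong adaptation on $E^s$ --- both in Section~\ref{s3} (the norm induced by $\mathcal{L}_X\alpha_+$ on $E^s$, i.e. by $-r_s\alpha_s$, is adapted) and in the verbal part of Definition/Proposition~\ref{snorm} --- is $r_s+X\cdot\ln(-r_s)<0$. Converting this into a condition on $r_s^2$ requires multiplying through by $r_s$, which is \emph{negative}, so the inequality reverses and one obtains $r_s^2+X\cdot r_s>0$, with the same plus sign as in the unstable case; this is precisely where your sign bookkeeping slipped. (The displayed formula $r^2+X\cdot|r|>0$ in Definition/Proposition~\ref{snorm}, which you followed literally, is itself off by a sign for contracting bundles and inconsistent with the verbal definition next to it; the sign-uniform formula is $r^2+X\cdot r>0$.) Once the correct condition is used, no ``mirror'' computation is needed at all: the derivation of Equation~\ref{derexps} never uses the sign of the expansion rates, only the relation $X\cdot\ln h=\bar r_u-r_u$, which holds equally for stable forms; hence Equation~\ref{derexps} applies verbatim to $E^s$ and convexity follows exactly as for $E^u$. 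That is the paper's intended proof.
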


This also implies that one can use the refinement of strong adaptations, as discussed in the final comments of Section~\ref{s3}, to achieve a fiberwise convex fibration of contact geometric models for Anosov 3-flows. More specifically, we can consider a sub-class of strong adaptations as
$$\mathcal{SSA}^+(M):=\bigg\{ (\alpha_+,\langle X \rangle)| \alpha_+ :  \text{strongly adapted, }\mathcal{L}_X\alpha_+ : \text{adapted to }\langle X \rangle \bigg\}/ (\mathbb{R}\backslash\{0\},.)$$
and the above computations show the following. Similarly, further refinements of this space can be studied (in particular, see Theorem~\ref{mains} and Remark~\ref{strongfilter}).

\begin{corollary}\label{sfiber}
The map
$$\begin{cases}
\mathcal{SSA}^+(M)\rightarrow \mathcal{A}(M)\\
(\alpha_{+},\langle X \rangle)\mapsto\langle X \rangle
\end{cases}$$
 is a fiberwise convex fibration.
\end{corollary}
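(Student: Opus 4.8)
The plan is to follow the second proof of Theorem~\ref{fiber}, the key new ingredient being that the two defining conditions of $\mathcal{SSA}^+(M)$ \emph{decouple} across the invariant bundles, which upgrades contractibility of the fibers to genuine convexity. Fix $\langle X\rangle\in\mathcal{A}(M)$, choose a representative $X$, and write a contact form in the fiber as $\alpha_+=\alpha_u-\alpha_s$ following Convention~\ref{added}. The first step is to record that, since $\mathcal{L}_X\alpha_+=r_u\alpha_u-r_s\alpha_s$, the condition that $\mathcal{L}_X\alpha_+$ be adapted splits as the pair of \emph{independent} inequalities $\bar r_u:=r_u+X\cdot\ln r_u>0$ (adaptation of the $E^u$-component $r_u\alpha_u$) and $\bar r_s:=r_s+X\cdot\ln(-r_s)<0$ (adaptation of the $E^s$-component $-r_s\alpha_s$). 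By Definition/Proposition~\ref{snorm} these say exactly that $\alpha_u$ and $\alpha_s$ induce \emph{strongly adapted} norms on $E^u$ and $E^s$, respectively; moreover $\bar r_u>0>\bar r_s$ forces $\bar r_u>\bar r_s$, so $\alpha_+$ is then automatically strongly adapted. Hence the fiber is identified, via the linear map $\alpha_+\mapsto(\alpha_u,\alpha_s)$, with a product of a set of strongly adapted norms on $E^u$ and a set of strongly adapted norms on $E^s$.

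For fiberwise convexity I would use this linearity directly: a straight-line path $(1-\tau)\alpha_+ + \tau\bar\alpha_+$ decomposes as $[(1-\tau)\alpha_u+\tau\bar\alpha_u]-[(1-\tau)\alpha_s+\tau\bar\alpha_s]$, so it suffices that each factor be convex. This is precisely Lemma~\ref{adnormconvex} for $E^u$ together with its verbatim analogue for $E^s$: Equation~\ref{derexps} writes $r_{u,\tau}^2+X\cdot r_{u,\tau}$ as a nonnegatively weighted average of the two endpoint quantities, so the open condition $r^2+X\cdot|r|>0$ defining strong adaptation survives the homotopy on each bundle separately. Since each of these two sets is moreover a convex cone invariant under positive constant rescaling, the diagonal scaling quotient by $(\mathbb{R}\backslash\{0\},\cdot)$ descends to a convex slice, and a product of convex sets is convex. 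This is the structural point distinguishing $\mathcal{SSA}^+(M)$ from $\mathcal{SA}^+(M)$, whose fibers are governed by the single \emph{coupled} inequality $\bar r_u>\bar r_s$ and need not be convex.

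It then remains to establish the fibration structure, for which I would reuse the local-triviality argument from the final paragraph of the second proof of Theorem~\ref{fiber}. Starting from $X_0$ with an adapted $\alpha_s$ rescaled so that $r_s\equiv-1$ (so that $\bar r_s\equiv-1<0$ holds for free), nearby Anosov fields $X_\tau$ inherit strongly adapted norms on their stable and unstable bundles, and identifying the corresponding fibers with continuously varying families of open convex sets $\{h:M\to\mathbb{R}\mid\cdots\}$ produces a local product structure; non-emptiness of every fiber is guaranteed by Theorem~\ref{expunif} (this is one of the layers in the filtration of Remark~\ref{strongfilter}). Combining this with the fiberwise convexity above yields the claimed fiberwise convex fibration. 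The main obstacle I anticipate is exactly this last local-triviality verification: one must arrange the strongly adapted norms on $E^{u}$ and $E^{s}$ of the perturbed flows $X_\tau$ to depend continuously on $\tau$ while the defining $C^2$-open inequalities vary continuously, and simultaneously track the constant-scaling quotient so that the fiber identifications are canonical. The convexity itself, by contrast, is immediate once the decoupling and Equation~\ref{derexps} are in hand.
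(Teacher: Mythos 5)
Your proposal is correct and takes essentially the same route as the paper: the decoupling of the $\mathcal{SSA}^+(M)$ condition into strong adaptedness of the norms induced on $E^u$ and $E^s$ separately, combined with Lemma~\ref{adnormconvex} (via Equation~\ref{derexps}) on each bundle and the local-triviality argument from the second proof of Theorem~\ref{fiber}, is precisely what the paper's terse ``the above computations show the following'' refers to. The scaling-quotient bookkeeping you flag is left equally implicit in the paper, so nothing is missing relative to its own argument.
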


%%%%%%%%%%%%%%%%%%%%%%%%%%%%%
%\section{References}

\Addresses
\end{document}